\newtheorem{theoA}{Theorem}
\newtheorem{propA}{Proposition}
\renewcommand{\theequation}{\thesection.\arabic{equation}}
\newtheorem{theorem}{Theorem}
\newtheorem{lemma}{Lemma}
\newtheorem{proposition}{Proposition}
\newtheorem{corollary}{Corollary}
\newtheorem{remark}{Remark}
\newtheorem{definition}{Definition}
\newtheorem{notation}{Notations}
\declaretheoremstyle[notefont=\bfseries,notebraces={}{},%
headpunct={},postheadspace=1em]{mystyle}
\declaretheorem[style=mystyle,numbered=no,name=Theorem]{thm-hand}
\newcommand{\eqnsection}{
\renewcommand{\theequation}{\thesection.\arabic{equation}}
    \makeatletter
    \csname  @addtoreset\endcsname{equation}{section}
    \makeatother}
\def\d{\, \mathrm{d}}
\def\ali{\hfill\break}
\def\w{\omega}
\def\demi{{1\over 2}}
\def\ddd{{\cal D}}
\def\Z{{{\Bbb Z}}}
\def\P{{{\Bbb P}}}
\def\N{{{\Bbb N}}}
\def\R{{{\Bbb R}}}
\def\ppp{{{\mathcal P}}}
\def\aaa{{{\mathcal A}}}
\def\fff{{{\mathcal F}}}
\def\ttt{{\mathcal T}}
\def\E{{{\Bbb E}}}
\def\Id{{\hbox{Id}}}
\def\bbb{{{\mathcal B}}}
\def\ggg{{\mathcal{G}}}
\def\simGun{{\stackrel{\ggg_1}{\sim}}}
\def\ddd{{\mathcal{D}}}
\def\skorokhod{{D}}
\DeclareFontFamily{U}{mathx}{\hyphenchar\font45}
\DeclareFontShape{U}{mathx}{m}{n}{
      <5> <6> <7> <8> <9> <10>
      <10.95> <12> <14.4> <17.28> <20.74> <24.88>
      mathx10
      }{}
\DeclareSymbolFont{mathx}{U}{mathx}{m}{n}
\DeclareMathAccent{\widecheck}{0}{mathx}{"71}
\DeclareMathAccent{\wideparen}{0}{mathx}{"75}
\def\hat{\widehat}
\def\tilde{\widetilde}
\def\check{\widecheck}
\begin{document}
\author[C. Sabot]{Christophe SABOT}
\address{Universit\'e de Lyon, Universit\'e Lyon 1,
Institut Camille Jordan, CNRS UMR 5208, 43, Boulevard du 11 novembre 1918,
69622 Villeurbanne Cedex, France} \email{sabot@math.univ-lyon1.fr}
\author[X. Zeng]{Xiaolin Zeng}\thanks{This work was supported by the LABEX MILYON (ANR-10-LABX-0070) of Universit\'e de Lyon, within the program "Investissements d'Avenir" (ANR-11-IDEX-0007) operated by the French National Research Agency (ANR), and by the ANR/FNS project MALIN (ANR-16-CE93-0003). The second author is supported
  by ERC Starting Grant 678520.}
\address{108 Schreiber building,
School of mathematics,
Tel aviv university, P.O.B. 39040,
Ramat aviv, Tel aviv 69978, Israel} 
\email{xzeng@math.univ-lyon1.fr}
\title[A random Schr\"odinger operator associated with the VRJP on infinite graphs]{A
random Schr\"odinger operator associated with the Vertex Reinforced Jump Process on infinite graphs}
\begin{abstract}
This paper concerns the Vertex reinforced jump process (VRJP), the Edge reinforced random walk (ERRW) and their link with a random Schr\"odinger operator.
On infinite graphs, we define a 1-dependent random potential \(\beta\) extending that defined in~\cite{sabot2011edge} on finite graphs,
and consider its associated random Schr\"odinger operator \(H_\beta\).
We construct a random function \(\psi\) as a limit of martingales, such that \(\psi=0\) when the VRJP is recurrent, and \(\psi\) is a positive generalized eigenfunction of the random Schr\"odinger operator with eigenvalue \(0\), when the VRJP is transient.
Then we prove a representation of the VRJP on infinite graphs as a mixture of Markov jump processes involving the function \(\psi\), the Green function of the random Schr\"odinger operator
and an independent Gamma random variable.
On \(\Z^d\), we deduce from this representation a zero-one law for recurrence or transience of the VRJP and the ERRW, and a functional central limit theorem for the VRJP and the ERRW at weak reinforcement in dimension \(d\ge 3\), using estimates of
  \cite{disertori2010quasi,disertori2014transience}.
Finally, we deduce recurrence of the ERRW in dimension \(d=2\) for any initial constant weights (using the estimates of Merkl and Rolles, \cite{merklbounding,merkl2009recurrence}),
thus giving a full answer to the old question of Diaconis.
  We also raise some questions on the links between recurrence/transience of
  the VRJP and localization/delocalization of the random Schr\"odinger operator \(H_\beta\).

\end{abstract}

\maketitle

\section{Introduction}
This paper concerns the Vertex Reinforced Jump Process (VRJP) and the Edge Reinforced Random Walk (ERRW) and their relation with
a random Schr\"odinger operator associated with a stationary 1-dependent random potential
(i.e.\ the potential is independent at distance larger or equal to 2).

The VRJP is a continuous time self-interacting process introduced in \cite{davis2004vertex}, investigated on trees in \cite{collevecchio2009limit,basdevant2012continuous}
and on general graphs in \cite{sabot2011edge,sabot2013ray}.
We first recall its definition.
Let \(\ggg=(V,E)\) be an undirected graph with finite degree at each vertex.
We write \(i\sim j\) if \(i\in V\), \(j\in V\) and \(\{i,j\}\) is an edge of the graph. We always assume that the graph is connected
and has no trivial loops (i.e.\ vertex \(i\) such that \(i\sim i\)).
Let \((W_{i,j})_{i\sim j}\) be a set of positive conductances, \(W_{i,j}>0\), \(W_{i,j}=W_{j,i}\).
The VRJP is the continuous time process \((Y_s)_{s\ge0}\) on \(V\), starting at time \(0\) at some vertex \(i_0\in V\),
which, conditionally on the past at time \(s\), if \(Y_s=i\),   jumps to a neighbour \(j\) of \(i\) at rate
\[W_{i,j}L_j(s),\]
where
\[L_j(s):=1+\int_0^s \mathds{1}_{\{Y_u=j\}}\,du.\]
In~\cite{sabot2011edge}, Sabot and Tarr\`es introduced the following time change of the VRJP
\begin{equation}
\label{Chgt-time}
Z_t=Y_{D^{-1}(t)},
\end{equation}
where \(D(s)\) is the following increasing function
\begin{equation*}
D(s)=\sum_{i\in V} (L^2_i(s)-1).
\end{equation*}
We call this process the VRJP in exchangeable time scale and denote by \(\P_{i_{0}}^{\text{VRJP}}\) its law
starting from the vertex \(i_0\).
When the graph is finite it is proved in \cite[Theorem~2]{sabot2011edge} that the VRJP in exchangeable time scale \((Z_t)_{t\ge 0}\)  is a mixture of Markov jump processes.
More precisely, there exists a random field \((u_{j})_{j\in V}\) such that \(Z\) is a mixture of Markov jump processes
with jump rates from \(i\) to \(j\)
\[\demi W_{i,j}e^{u_{j}-u_{i}}.\]
The law of \((u_j)\) is explicit, c.f. \cite[Theorem~2]{sabot2011edge}, and forthcoming Theorem~\ref{thm_vrjp_u_ST}.
It appears to be a marginal of a supersymmetric sigma-field which had been investigated previously by
Disertori, Spencer, Zirnbauer (c.f. \cite{disertori2010anderson}, \cite{disertori2010quasi}, \cite{zirnbauer1991fourier}).
As a consequence of this representation and of~\cite{disertori2010anderson}, \cite{disertori2010quasi}, it was proved in~\cite{sabot2011edge} the following:
when the graph has bounded degree, there exists a real \(\lambda_0>0\) such that if \(W_{i,j}\le \lambda_0\) for all $i\sim j$, then the VRJP is positively recurrent,
more precisely, \(Z\) is a mixture of positive recurrent Markov jump processes. When the graph is the grid \(\Z^d\), with \(d\ge 3\), there exists
\(\lambda_1<+\infty\) such that if \(W_{i,j}\ge \lambda_1\) for all $i\sim j$, the VRJP is transient.  Hence, it shows a phase transition between recurrence
and transience in dimension \(d\ge 3\).
The question of the representation of the VRJP on infinite graphs as a mixture of Markov jump processes is non trivial, especially in the transient case.
It is possible to prove such a representation by a weak convergence argument, following~\cite{merkl2007random}, but it gives little information on the mixing law.
In this paper we prove such a representation involving the Green function and a generalized eigenfunction of a random Schr\"odinger operator.

Let us give a flavour of the main results of the paper in the case of the VRJP on \(\Z^d\) with \(W_{i,j}=W\) constant.
We construct a positive 1-dependent random potential \((\beta_j)_{j\in \Z^d}\) (i.e.\ two subsets of the \(\beta\)'s are independent if their indices are at least at distance 2)
and with marginal given by inverse of Inverse Gaussian law with parameters \(({1\over dW},1)\). This field is a natural
extension to infinite graphs of the field defined by Sabot, Tarr\`es, Zeng in~\cite{STZ}\@. We consider the random
Schr\"odinger operator
\[
H_\beta= -W\Delta +V,
\]
where \(\Delta\) is the usual discrete (non-positive) Laplacian and \(V\) is the multiplication operator defined by \(V_j= 2\beta_j -2dW\).
Hence, it corresponds to the Anderson model with a random potential which is not i.i.d.\ but only stationary and 1-dependent.
When the VRJP is transient we prove that there exists a positive generalized eigenfunction \(\psi\)
of \(H_\beta\) with eigenvalue 0, stationary and ergodic. Let
\((G(i,j))_{i\in Z^d, j\in \Z^d}\) be defined by
\[
G(i,j)= \hat G(i,j)+\frac{1}{2}\gamma^{-1} \psi(i)\psi(j),
\]
where \(\hat G=(H_\beta)^{-1}\) is the Green function (which happens to be well-defined in an appropriate sense)
and \(\gamma\) is an extra random variable independent of the field
\(\beta\) with law \(Gamma(\demi, 1)\). We prove the  following representation for the VRJP: the VRJP in exchangeable time scale \(Z\) starting from
the point \(i_0\) is a mixture of Markov jump processes with jump rates from \(i\) to \(j\)
\begin{eqnarray}\label{rep}
\demi W_{i,j} {G(i_0,j)\over  G(i_0,i)}.
\end{eqnarray}
When the VRJP is recurrent the same representation is valid with \(\psi=0\). In fact, the function \(\psi\) is the a.s.\ limit of
a martingale, the limit being positive when the VRJP is transient and 0 when the VRJP is recurrent.
It is remarkable that when the VRJP is recurrent it can be represented as a mixture with \(\beta\)-measurable jump rates, but when the VRJP
is transient it involves an extra independent Gamma random variable.
This representation extends to infinite graphs the representation given in \cite{STZ} for finite graphs. A new feature appears in the transient case,
where the generalized eigenfunction \(\psi\) is involved in the representation.
We suspect that recurrence/transience of the VRJP is related to localization/delocalization of the
random Schr\"odinger operator \(H_\beta\) at the bottom of the spectrum.

The representation (\ref{rep}) has several consequences on the VRJP and the ERRW.
The ERRW is a reinforced process introduced by Diaconis and Coppersmith in 1986 (see Section~\ref{results-ERRW} for a definition).
The recurrence of the 2-dimensional ERRW is a famous open question raised by Diaconis, see \cite{coppersmith1987random,pemantle1988phase,rolles2000edge,merkl2009recurrence} for early references.
Important progress has been done recently in the understanding of this process. In particular, in \cite{sabot2011edge},
an explicit relation between the ERRW and the VRJP was stated, thus somehow reducing the analysis of the ERRW to that of the VRJP.
In \cite{sabot2011edge,angel2014localization}, it was proved by rather different methods that the ERRW on any graph with bounded degree
at strong enough reinforcement is positive recurrent. In \cite{disertori2014transience}, it was proved that the ERRW is transient on \(\Z^d\), \(d\ge 3\),
at weak reinforcement.

The representation (\ref{rep}) allows us to complete the picture both in dimension 2 and in the transient regime.
More precisely, we prove a functional central limit theorem for the ERRW and for the discrete time process associated with the VRJP in dimension \(d\ge 3\) at weak
reinforcement, using the estimates of \cite{disertori2010quasi,disertori2014transience}.
 Using the polynomial estimate provided by Merkl and Rolles, \cite{merkl2009recurrence},
 we are able to prove recurrence of ERRW on \(\Z^2\) for all initial constant weights, hence giving a full answer to the question of Diaconis.
\section{Statements of the results}
\label{Resultats}
\subsection{ Notations}
\label{sec_Notations}
We denote by $\R_+$ (resp. $\R^*_+$) the set of non-negative (resp. positive) reals.

Let \(\mathcal{G}=(V,E)\) be an undirected, locally finite, connected graph without trivial loops or multiples edges. For \(i,j\in V\), write \(i\sim j\) if \(i\) is a neighbor of \(j\). We write \(\operatorname{d}_{\mathcal{G}}\) for the graph distance in \(\mathcal{G}\), and for two subsets \(U,U’\) of \(V\), we define \(\operatorname{d}_{\mathcal{G}}(U,U’)=\inf_{i\in U,j\in U’}\operatorname{d}_{\mathcal{G}}(i,j)\). We suppose given, for each edge \(e=\{i,j\}\in E\), a positive real \(W_{i,j}>0\), understood as the conductance of the edge \(e\). In this case we call $(\ggg, (W_e)_{e\in E})$ a graph with conductances.

\noindent{\underline {Convention}:} We adopt the notation \(\sum_{i\sim j} \) for the sum on all undirected edges \(\{i,j\}\), counting only once each edge.

When $\beta=(\beta_i)_{i\in V}\in \R^V$ is a real vector indexed by the vertices and $U\subset V$,
we write $\beta_U$ for the restriction of $\beta$ to $U$, i.e. $\beta_U=(\beta_i)_{i\in U}$. When $A=(A_{i,j})_{i,j\in V}\in \R^{V\times V}$ is a real function on $V\times V$ and $U\subset V$, $U'\subset V$, we write $A_{U,U'}$ for the restriction of $A$ to $U\times U'$, i.e. $A_{ U,U'}=(A_{i,j})_{i\in U,j\in U'}$.  

It will be convenient to define the continuous time processes that appear in the text on the same canonical space. In the sequel, we will denote by $\skorokhod([0,\infty),V)$ the space of c\`adl\`ag
functions from $[0,\infty)$ to $V$.  The law of the VRJP in exchangeable time scale defined in \eqref{Chgt-time}, starting from $i_0$, will be denoted by \(\P_{i_{0}}^{\text{VRJP}}\), which is a probability on $\skorokhod([0,\infty),V)$.
The VRJP will always be defined on the canonical space and $(Z_t)_{t\in \R_+}$ will denote the canonical process defined by $Z_t(\w)=\w(t)$ for $\w\in \skorokhod([0,\infty),V)$.
\begin{remark}
\label{rk_mult_edge}
We do not allow multiple edges or trivial loops since it does not bring more generality to the VRJP. Indeed, from its definition, it follows that the VRJP on a graph with multiple edges and trivial loops has the same law as the VRJP on the graph where trivial loops are removed and multiple edges are replaced by a single edge by summing the conductances of the multiples edges. Similarly, the law on random potentials that appears in the sequel can always be reduced to graphs without multiple edges or trivial
loops. Nevertheless, in Section~\ref{ss_martingale} it simplifies notations to allow trivial loops.
\end{remark}
\subsection{Representation of the VRJP on infinite graphs}
\label{ss_main-results}
Define the operator \(P=(P_{i,j})_{i,j\in V}\) by
\[
P_{i,j}=\left\{\begin{array}{ll}
W_{i,j},& \hbox{ if \(i\sim j\)},
\\0, & \hbox{ otherwise}.
\end{array}
\right.
\]
We define below a probability distribution on potentials on the graph. A potential on the graph will generically be denoted $\beta=(\beta_i)_{i\in V}\in \R^V$. 
With the potential $\beta\in \R^V$, we associate the Schr\"odinger operator on \(\mathcal{G}\)
\begin{eqnarray}\label{def_Hbeta}
H_\beta = -P+2\beta,
\end{eqnarray}
where \(\beta\) represents the operator of multiplication by the potential \((\beta_i)\) (or equivalently the diagonal operator with diagonal terms $(\beta_i)_{i\in V}$).

We denote by
\begin{align}
\label{DVW}
\ddd_V^W=\{\beta\in\R^V, \;\; (H_\beta)_{U,U}>0\hbox{ for all finite subsets $U\subset V$}\},
\end{align}
where $(H_\beta)_{U,U}>0$ means that the restriction of $H_\beta$ to $U\times U$ is positive definite. Obviously, $\ddd_V^W\subset (\R_+^*)^V$ since when $U=\{i\}$ the restriction of $H_\beta$ is the real $2\beta_i$. We endow $\ddd_V^W$ with its Borelian $\sigma$-field denoted $\bbb(\ddd_V^W)$.

The following statement extends the random potential defined in \cite[Theorem~1]{STZ} to infinite graphs.
\begin{proposition}
\label{kolmogorov} Let $(\ggg,(W_e)_{e\in E})$ be a graph with conductances as defined in Section~\ref{sec_Notations}.
There exists a unique probability distribution $\nu_{V}^W$ defined on 
$(\ddd_V^W,\bbb(\ddd_V^W))$,
such that for any finite subset \(U\subset V\)
and any \((\lambda_i)_{i\in U}\in \R_{+}^U\):
\begin{eqnarray*}
\int e^{-\sum_{i\in U} \lambda_i \beta_i} \nu_V^W(d\beta)=e^{-\sum_{i\sim j, \; i,j\in U} W_{i,j}(\sqrt{(1+\lambda_i)(1+\lambda_j)}-1)
-\sum_{i\sim j, i \in U, j\notin U} W_{i,j}(\sqrt{1+\lambda_i}-1)}{1\over \prod_{i\in U} \sqrt{1+\lambda_i}}.
\end{eqnarray*}
In particular, we have the following properties: on the probability space $(\ddd_V^W,\bbb(\ddd_V^W),\nu_V^W(d\beta)$),
\begin{itemize}
\item
{\it(1-dependence)} if \(U,U’\subset V\) are such that \(\operatorname{d}_{\mathcal{G}}(U, U’)\geq 2\), then the random variables \(\beta\mapsto \beta_U\) and \(\beta\mapsto\beta_{U’}\)
are independent,
\item
{\it(Reciprocal inverse Gaussian marginals)}
for $i\in V$, the random variable \(\beta\mapsto \frac{1}{2\beta_i}\) has an inverse Gaussian distribution with parameter \((\frac{1}{W_{i}},1)\) where \(W_i=\sum_{j\sim i} W_{i,j}\).
\end{itemize}
\end{proposition}
\begin{remark}
On finite graphs, the density of $\nu_V^W$ is explicit,
c.f.~\cite[Theorem 1]{STZ} and Theorem~\ref{thm_potential_fini} below.
\end{remark}

In the sequel, the probability space 
$(\ddd_V^W,\bbb(\ddd_V^W),\nu_V^W)$
will be considered as the canonical space of random potentials on the graph. We write $\E_{\nu_V^W}$ for the expectation with respect to $\nu_V^W$.
We will introduce several random variables on this probability space, and adopt the following notation: when $\beta\mapsto X_\beta$ is a measurable function we will write $X$ for the associated random variable and $X_\beta$ for its realization on the potential $\beta$. In particular, we will write $H$ for the random Schr\"odinger operator $\beta\mapsto H_\beta$ defined above. By abuse of notation, we sometimes consider $\beta_i$ for $i\in V$ or $\beta_U$ for $U\subset V$ as random variables (more precisely, the random variables are $\beta\mapsto \beta_i$ and $\beta\mapsto \beta_U$).

\begin{definition}
\label{def-G-and-psi}
Let \((V_{n})_{n\in \N}\) be an increasing  sequence of finite connected subsets of \(V\) such that
\[
\cup_{n=0}^\infty V_n =V.\]
For $n\in \N$, we define $\fff^{(n)}\subset \bbb(\ddd_V^W)$ as the sub $\sigma$-field generated by the random variable $\beta\mapsto \beta_{V_n}$. 
For $n\in \N$ and $\beta\in \ddd_V^W$, we define a random operator $(\hat G_\beta^{(n)}(i,j))_{i,j\in V}$ by
\[
\hat G_\beta^{(n)}(i,j) =
\begin{cases}
((H_\beta)_{V_n,V_n})^{-1}(i,j) , & \hbox{ if \( i,j \in V_n\),}
\\
0, & \hbox{ otherwise.}
\end{cases}
\]
For $n\in \N$ and $\beta\in \ddd_V^W$, we define a random function $(\psi_\beta^{(n)}(i) )_{i\in V}$
as the unique solution of the following equation:
$$
\begin{cases}
H_\beta (\psi_\beta^{(n)})(i)=0, &\hbox{ for $i\in V_n$,}
\\
\psi_\beta^{(n)}(i)=1, &\hbox{ for $i\in V^c_n$.}
\end{cases}
$$
By definition, the random variables $\hat G^{(n)}:\beta\mapsto \hat G^{(n)}_\beta$ and $\psi^{(n)}:\beta\mapsto\psi^{(n)}_\beta$  are $\fff^{(n)}$-measurable.
\end{definition}
The fact that there is a unique solution to the equation defining $\psi^{(n)}_\beta$ is elementary, see the proof in Section~\ref{ss_Kolmogorov}.

Our main theorem is the following.
\begin{theorem}
\label{convergence}
\begin{enumerate}[label=(\roman*),leftmargin=*]
\item
\label{convergence_i}
For all $i,j\in V$, the sequence of random variables \(\hat{G}^{(n)}(i,j)\) is non-decreasing and converges a.s. to 
\[
\hat G(i,j):= \lim_{n\to \infty} \hat G^{(n)}(i,j).
\]
Moreover, $\nu_V^W$-almost surely, $0<\hat G(i,j)<\infty$ and the limit does not depend on the choice of the sequence of subsets $V_n$. 
\item
\label{convergence_ii}
Under the probability $\nu_V^W$, for all \(i\in V\), \(\psi^{(n)}(i)\) is a positive \(\fff^{(n)}\)-martingale. It converges a.s.\ to a
random variable \(\psi(i)\), such that $\psi(i)\ge 0$ a.s., and the limit does not depend on the choice of the
increasing sequence \((V_n)\).
Moreover, the quadratic variation of the vectorial martingale \((\psi^{(n)}(i))_{i\in V}\) is given a.s. by
\[\left<\psi(i),\psi(j)\right>_{n}=\hat G^{(n)}(i,j).
\]
In particular,
\(\psi^{(n)}(i)\) is bounded in \(L^2\) if and only if \(\E_{\nu_V^W}(\hat G(i,i))<\infty\).

\item
\label{convergence_iii}
For any real $\gamma>0$ and $\beta\in \ddd_V^W$, we define
\[
G_{\beta,\gamma}(i,j)= \hat G_\beta(i,j)+\frac{1}{2}\gamma^{-1} \psi_\beta(i)\psi_\beta(j).
\]
For $i_0\in V$ and $x\in V$, denote by \(P_x^{\beta,\gamma,i_{0}}\) the law of the Markov jump process which starts at $x\in V$ and jumps from $i$ to $j$ at rate
\begin{eqnarray}\label{jumping-G}
\frac{1}{2}W_{i,j} \frac{G_{\beta,\gamma} (i_{0},j)}{G_{\beta,\gamma} (i_{0},i)}.
\end{eqnarray}
Then the VRJP in exchangeable time scale, defined in section \ref{sec_Notations}, with conductances $(W_{i,j})$ and starting from $i_0$ is a mixture of these Markov jump processes and has law
\begin{eqnarray}\label{rep_VRJP}
\P^{\text{VRJP}}_{i_{0}}(\ \cdot\ )=\int P_{i_0}^{\beta,\gamma,i_{0}}(\ \cdot \ ) \nu_V^{W}(d\beta) \frac{\mathds{1}_{\gamma>0}}{\sqrt{\pi\gamma}}e^{-\gamma}d\gamma.
\end{eqnarray}
\item\label{iv}
For $\nu_V^W$-almost all $\beta$, all $\gamma>0$ and all $i_0\in V$, we have,
\begin{itemize}
\item
the Markov process \(P_{i_0}^{\beta,\gamma,i_{0}}\)
is transient if and only if \(\psi_\beta(j)>0\) for all \(j\in V\),
\item
the Markov process \(P_{i_0}^{\beta,\gamma,i_{0}}\) is recurrent if and only if \(\psi_\beta(j)=0\) for all \(j\in V\).
\end{itemize}

\end{enumerate}
\end{theorem}
\noindent
{\bf N.B.}: Note that $P^{\beta,\gamma,i_0}_x$ is well defined for $\nu_V^W$-almost all $\beta$ and all $\gamma>0$ by \ref{convergence_i} and \ref{convergence_ii}. 
\begin{notation}
We denote by
\begin{align}
\label{nu_beta_gamma}
\nu_V^{W}(d\beta,d\gamma):=\nu_V^{W}(d\beta)\otimes\frac{\mathds{1}_{\gamma>0}}{\sqrt{\pi\gamma}}e^{-\gamma}d\gamma 
\end{align}
 the probability distribution which appears in \eqref{rep_VRJP}, under which $\gamma$ is $Gamma(\demi, 1)$-distributed and independent of $\beta$. 
 In general, we simply write $G(i,j)$ for $G_{\beta,\gamma}(i,j)$ and 
 consider it as a random variable on the probability space $(\ddd_V^W\times \R^*_+, \bbb(\ddd_V^W)\otimes\bbb(\R_+^*), \nu_V^W(d\beta,d\gamma))$.
\end{notation}
\begin{remark}
When the VRJP is recurrent, \(G=\hat G\), and the representation of the VRJP \eqref{rep_VRJP} only involves the variable $\beta$ and not $\gamma$.
\end{remark}
\begin{remark}
The representation (\ref{jumping-G}) extends to infinite graphs the representation provided in \cite[Theorem 2]{STZ} for finite graphs.
An interesting new feature appears in the transient regime, where the generalized eigenfunction \(\psi\) and the extra gamma random variable
enter the expression of \(G(i,j)\). As it appears in the proof, the eigenfunction \(\psi\) can be interpreted as the mixing field of a VRJP starting from infinity.
\end{remark}

Denote by \(\tau_{i_0}^{+}=\inf\{t\ge 0, \;\; Z_t=i_0, \; \exists  s<t \hbox{ s.t. } Z_s\neq i_0\}\) the first return time to \(i_0\) by \((Z_t)_{t\ge 0}\).
The point \ref{iv} of the previous theorem is in fact a consequence of the following more precise assertion.
\begin{proposition}
\label{coro-escape-proba}
We have, for $\nu_V^W$-almost all $\beta$, for all $\gamma>0$ and $i_0$, $i\in V$,
   \[
    P_{i}^{\beta,\gamma,i_{0}}(\tau_{i_0}^{+}=\infty)=
\begin{cases}
\frac{\psi(i_{0})^{2}}{4\gamma \tilde{\beta}_{i_{0}}\hat{G}(i_{0},i_{0})G(i_{0},i_{0})}, & \hbox{if $i=i_{0}$,}\\
\frac{\psi(i_{0})}{2\gamma}\frac{\hat{G}(i_{0},i_{0})\psi(i)-\hat{G}(i_{0},i)\psi(i_{0})}{\hat{G}(i_{0},i_{0})G(i_{0},i)}, & \hbox{ if $i\neq i_{0}$,}
\end{cases}
  \]
where \(\tilde{\beta}_{i_{0}}=\sum_{j\sim i_{0}}\frac{1}{2}W_{i_{0},j}\frac{G(i_{0},j)}{G(i_{0},i_{0})}\). In particular, \(\psi(i_0)=0\) if and only if
\(P_{i_0}^{\beta,\gamma,i_{0}}(\tau_{i_0}^{+}=\infty)=0\).
\end{proposition}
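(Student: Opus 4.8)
The plan is to reduce everything to the hitting probability $f(i):=P_i^{\beta,\gamma,i_0}(\tau_{i_0}<\infty)$, where $\tau_{i_0}=\inf\{n\ge 0:\tilde Z_n=i_0\}$, and then to identify $f$ explicitly. First I would record that the embedded discrete chain $(\tilde Z_n)$ is the reversible random walk with conductances $c(i,j)=W_{i,j}\,G(i_0,i)G(i_0,j)$, since its one–step transition probabilities are proportional to the jump rates (\ref{jumping-G}). The two cases of the proposition then reduce to $f$: for $i\neq i_0$ one has $P_i^{\beta,\gamma,i_0}(\tau_{i_0}^+=\infty)=1-f(i)$, while for $i=i_0$ a one–step decomposition gives $P_{i_0}^{\beta,\gamma,i_0}(\tau_{i_0}^+=\infty)=1-\frac{1}{c(i_0)}\sum_{j\sim i_0}c(i_0,j)f(j)$.

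The heart of the argument is the identity
\[
f(i)=\frac{G(i_0,i_0)}{\hat G(i_0,i_0)}\,\frac{\hat G(i_0,i)}{G(i_0,i)}.
\]
To find and justify it I would use $H_\beta\,\hat G(i_0,\cdot)=\delta_{i_0}$ and $H_\beta\psi=0$; since $G(i_0,\cdot)=\hat G(i_0,\cdot)+\frac{1}{2}\gamma^{-1}\psi(i_0)\psi$, these yield $H_\beta\,G(i_0,\cdot)=\delta_{i_0}$ as well. A direct computation using the relation $\sum_{k\sim j}W_{j,k}G(i_0,k)=2\beta_j G(i_0,j)-\delta_{i_0}(j)$ then shows that, off $i_0$, a function $h$ is harmonic for the conductances $c$ if and only if $h\cdot G(i_0,\cdot)$ is $H_\beta$–harmonic. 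Consequently $g:=\frac{G(i_0,i_0)}{\hat G(i_0,i_0)}\hat G(i_0,\cdot)$, which satisfies $H_\beta g=\lambda\delta_{i_0}$ and $g(i_0)=G(i_0,i_0)$, produces through $f=g/G(i_0,\cdot)$ a $c$–harmonic function off $i_0$ with $f(i_0)=1$.

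The delicate point is that being $c$–harmonic off $i_0$ with value $1$ at $i_0$ does not by itself determine $f$: one must rule out the addition of a nonnegative $c$–harmonic function and show that the candidate is the \emph{minimal} nonnegative solution, i.e.\ the true hitting probability. I expect this to be the main obstacle, and I would resolve it by finite exhaustion rather than by an abstract boundary argument at infinity. Keeping the conductances $c$ fixed, set $f_n(i)=P_i(\tau_{i_0}<\tau_{V_n^c})$; since $\tau_{V_n^c}\uparrow\infty$ along every trajectory, $f_n\uparrow f$. On $V_n$ the function $g_n:=f_n\,G(i_0,\cdot)$ solves the Dirichlet problem $H_\beta g_n=\lambda_n\delta_{i_0}$ on $V_n$ with $g_n\equiv 0$ off $V_n$, so by invertibility of the restricted operator $g_n=\lambda_n\hat G^{(n)}(i_0,\cdot)$ with $\lambda_n=G(i_0,i_0)/\hat G^{(n)}(i_0,i_0)$, whence $f_n(i)=\frac{G(i_0,i_0)}{\hat G^{(n)}(i_0,i_0)}\frac{\hat G^{(n)}(i_0,i)}{G(i_0,i)}$. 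Letting $n\to\infty$ and invoking the a.s.\ convergence $\hat G^{(n)}\to\hat G$ from Theorem~\ref{convergence}(i) gives the displayed formula for $f$ (and also, upon passing to the limit in the finite Green identities whose neighbour sums are finite, the infinite–volume relations $H_\beta G(i_0,\cdot)=\delta_{i_0}$ and $H_\beta\hat G(i_0,\cdot)=\delta_{i_0}$ used above).

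Finally I would substitute $f$ into the two expressions of the first paragraph. The $i_0$–rows of $H_\beta G(i_0,\cdot)=\delta_{i_0}$ and $H_\beta\hat G(i_0,\cdot)=\delta_{i_0}$ give $\sum_{j\sim i_0}W_{i_0,j}G(i_0,j)=2\beta_{i_0}G(i_0,i_0)-1$ and the analogue for $\hat G$; combined with $\tilde\beta_{i_0}=\frac{1}{2G(i_0,i_0)}\sum_{j\sim i_0}W_{i_0,j}G(i_0,j)$ (so that $c(i_0)=2\tilde\beta_{i_0}G(i_0,i_0)^2$) and with $G(i_0,i_0)-\hat G(i_0,i_0)=\frac{1}{2}\gamma^{-1}\psi(i_0)^2$, the algebra collapses to the two stated formulas. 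The claimed equivalence $\psi(i_0)=0\iff P_{i_0}^{\beta,\gamma,i_0}(\tau_{i_0}^+=\infty)=0$ is then immediate, since $\gamma,\tilde\beta_{i_0},\hat G(i_0,i_0),G(i_0,i_0)$ are all positive and finite.
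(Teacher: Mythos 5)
Your proposal is correct, and it reaches the pivotal identity
$P_i^{\beta,\gamma,i_0}(\tau_{i_0}<\infty)=\frac{\hat G(i_0,i)}{\hat G(i_0,i_0)}\cdot\frac{G(i_0,i_0)}{G(i_0,i)}$
by a genuinely different route from the paper. The paper works with path sums: it first passes Proposition~\ref{fini_green} to the infinite graph by monotone convergence, obtaining $\hat G(i_0,i)/\hat G(i_0,i_0)=\sum_{\sigma\in\bar{\mathcal{P}}_{i,i_0}^{V}}W_\sigma/(2\beta)_\sigma^-$, and then observes that the probability that the chain with conductances $c(i,j)=W_{i,j}G(i_0,i)G(i_0,j)$ follows a given path $\sigma$ stopped at its first visit to $i_0$ telescopes to $\frac{W_\sigma}{(2\beta)^-_\sigma}\frac{G(i_0,i_0)}{G(i_0,i)}$ (using $2\beta_jG(i_0,j)=\sum_{l\sim j}W_{j,l}G(i_0,l)$ off $i_0$); summing exactly over $\bar{\mathcal{P}}_{i,i_0}^{V}$ gives the hitting probability directly, so the minimality issue you flag never arises --- it is absorbed into the choice of the path family. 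You instead characterize the hitting probability as the monotone limit of solutions of finite Dirichlet problems, with the inversion identity $(\hat G^{(n)})_{|V_n\times V_n}=((H_\beta)_{|V_n\times V_n})^{-1}$ of (\ref{eq-GhatInverse}) as the key input; your explicit treatment of minimality is exactly right (in the transient case ``$c$-harmonic off $i_0$, equal to $1$ at $i_0$'' does \emph{not} determine $f$, since $1-h$ is itself a nonnegative $c$-harmonic perturbation), and your exhaustion yields the clean finite-$n$ identity $f_n(i)=\frac{G(i_0,i_0)}{\hat G^{(n)}(i_0,i_0)}\frac{\hat G^{(n)}(i_0,i)}{G(i_0,i)}$, correct because the conductances are held fixed while only the stopping domain varies. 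The concluding algebra --- one-step decomposition at $i_0$, the relations $H_\beta\hat G(i_0,\cdot)=\mathds{1}_{i_0}$ and $H_\beta\psi=0$, and $G=\hat G+\frac{1}{2\gamma}\psi(i_0)\psi$ --- coincides with the paper's (which invokes Theorem~\ref{main} (ii)--(iii) at this point, just as you re-derive these identities by passing the finite neighbour sums to the limit). What each approach buys: the paper's computation is shorter because Proposition~\ref{fini_green} and the factorization (\ref{eq-factorize-path-sum}) were already set up for finite graphs; your potential-theoretic version is more standard and robust, makes the uniqueness question explicit rather than implicit, and would transfer to settings where a path-sum representation is less convenient.
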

Using Doob's \(h\) transform, the law of the process \((Z_t)\) conditioned on the event \(\{\tau_{0}^{+}<\infty\}\)  or  \(\{\tau_{0}^{+}=\infty\}\) can be computed and
takes a rather nice form,
both under the law $\P^{VRJP}_{i_0}$, or under the law $P_{i_0}^{\beta,\gamma,i_{0}}$ for $\nu_V^W$-almost all $\beta$. We provide these formulae in Section~\ref{ss_h-transform}.

A natural question that emerges from point \ref{iv} of the theorem is that of a 0-1 law for transience/recurrence. We provide an
answer below in the case of vertex transitive graphs with conductances. We say that \((\ggg, W)\)  is vertex transitive
if the group of automorphisms of \(\ggg\) that leaves invariant \((W_{i,j})\) is transitive on vertices. In particular, it is the case for the cubic lattice
\(\Z^d\) with constant conductances \(W_{i,j}=W\). Denote by \(\aaa\) the group of automorphisms that leave \(W\) invariant.
\begin{proposition}\label{0-1law}
If \((\ggg,W)\) is vertex transitive and \(\ggg\) is infinite, then under the distribution \(\nu_V^W(d\beta)\), the random variables \((\beta_i)_{i\in V}\), \((\psi(i))_{i\in V}\), \((\hat G(i,j))_{i,j\in V}\) are stationary and ergodic for the
group of transformations \(\aaa\). Moreover,  the VRJP is either recurrent or transient, i.e.
\[
\P^{\operatorname{VRJP}}_{i_0}( \hbox{ every vertex is visited i.o.\ })=1 \hbox{ or } \P^{\operatorname{VRJP}}_{i_0}( \hbox{ every vertex is visited f.o. })=1.
\]
In the first case \(\psi(i)=0\) for all \(i\in V\), a.s., in the second case \(\psi(i)>0\) for all \(i\in V\), a.s.
\end{proposition}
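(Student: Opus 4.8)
The plan is to prove the four assertions in order: stationarity of $\beta$, stationarity of $\psi$ and $\hat G$, ergodicity of the triple, and finally the recurrence/transience dichotomy. Throughout, for $g\in\aaa$ I write $T_g\beta=(\beta_{g(i)})_{i\in V}$ for the induced action on configurations.

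First I would read off stationarity of $\beta$ directly from the Laplace transform of Proposition~\ref{kolmogorov}. Since $g$ preserves adjacency and $W_{g(i),g(j)}=W_{i,j}$, replacing $U$ by $g(U)$ and the test vector $(\lambda_i)$ by $(\lambda_{g^{-1}(i)})$ leaves every edge-sum and the factor $\prod_{i}\sqrt{1+\lambda_i}$ unchanged; hence $T_g\beta\stackrel{d}{=}\beta$, i.e.\ $T_g$ is measure preserving. For $\psi$ and $\hat G$ I would use their path representations together with the sequence-independence of the limit in Theorem~\ref{convergence}(ii). Mapping every path by $g$ gives a weight-preserving bijection between paths out of $g(i)$ relative to $(V_n)$ and paths out of $i$ relative to $(g^{-1}(V_n))$ for the field $T_g\beta$, so that $\psi^{(n)}(g(i))[\beta]$ equals the corresponding quantity for $T_g\beta$ along the exhausting sequence $(g^{-1}(V_n))$. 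Letting $n\to\infty$ and invoking sequence-independence yields the equivariance identities $\psi(g(i))[\beta]=\psi(i)[T_g\beta]$ and $\hat G(g(i),g(j))[\beta]=\hat G(i,j)[T_g\beta]$; combined with $T_g\beta\stackrel{d}{=}\beta$ these give stationarity of $\psi$ and $\hat G$.

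The core of the proof is ergodicity, which I would reduce to ergodicity of $\beta$: since $\psi$ and $\hat G$ are $\beta$-measurable and $\aaa$-equivariant, they generate no new invariant sets. The mechanism is the classical ``$1$-dependence implies ergodicity'' argument, and the one structural fact it needs is that finite sets can be pushed arbitrarily far apart. Because $\ggg$ is infinite, locally finite, and $\aaa$ is transitive, the orbit of any vertex is all of $V$ while balls are finite; hence for any finite $U$ and any $R$ there is $g\in\aaa$ with $\operatorname{d}_{\ggg}(U,g(U))\ge R$. Given an $\aaa$-invariant event $E\in\sigma(\beta)$, I would approximate it to within $\varepsilon$ by a local event $A\in\sigma((\beta_i)_{i\in U})$, choose $g$ with $\operatorname{d}_{\ggg}(U,g(U))\ge 2$, and use $1$-dependence to get that $A$ and $T_g^{-1}A\in\sigma((\beta_i)_{i\in g(U)})$ are independent. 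Invariance and measure-preservation then give $\P(E)=\P(E\cap T_g^{-1}E)$, which is within $O(\varepsilon)$ of $\P(A\cap T_g^{-1}A)=\P(A)^2$, itself within $O(\varepsilon)$ of $\P(E)^2$; letting $\varepsilon\to 0$ forces $\P(E)\in\{0,1\}$. I expect this to be the main obstacle: one must set up the displacement cleanly and control the approximation errors, whereas the rest is essentially algebraic.

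Finally, for the dichotomy I would first extract the harmonicity of $\psi$. Decomposing a path out of $V_n$ by its first step and using $\psi^{(n)}\equiv 1$ off $V_n$ gives $2\beta_i\,\psi^{(n)}(i)=\sum_{j\sim i}W_{i,j}\,\psi^{(n)}(j)$ for every $i\in V_n$; fixing $i$ (eventually $i$ and all its neighbours lie in $V_n$) and passing to the a.s.\ limit of Theorem~\ref{convergence}(ii) yields $2\beta_i\,\psi(i)=\sum_{j\sim i}W_{i,j}\,\psi(j)$ for all $i$. As $\psi\ge 0$, $\beta_i>0$, $W_{i,j}>0$ and $\ggg$ is connected, $\psi(i_0)=0$ at a single vertex forces $\psi\equiv 0$; thus for every realization either $\psi\equiv 0$ or $\psi(i)>0$ for all $i$. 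Hence $\{\psi\equiv 0\}=\{\psi(i_0)=0\}$, which is $\aaa$-invariant by the equivariance identity and therefore has probability $0$ or $1$ by ergodicity. To transfer this to the annealed VRJP I would use the mixture representation of Theorem~\ref{convergence}(iii): the jump rates~(\ref{jumping-G}) are everywhere positive, so $P^{\beta,\gamma,i_0}$ is irreducible, whence quenched recurrence forces every vertex to be visited infinitely often and quenched transience forces every vertex to be visited finitely often. Integrating against $\nu_V^W(d\beta,d\gamma)$ and invoking Theorem~\ref{convergence}(iv) gives exactly the stated $0$–$1$ dichotomy, with $\psi\equiv 0$ a.s.\ in the recurrent case and $\psi(i)>0$ for all $i$ a.s.\ in the transient case.
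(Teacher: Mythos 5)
Your proof is correct and follows essentially the same route as the paper: stationarity of \(\beta\) read off the Laplace transform of Proposition~\ref{kolmogorov}, equivariance of \(\psi\) and \(\hat G\) via the path representations and the sequence-independence of the limit in Theorem~\ref{convergence}(ii), ergodicity from 1-dependence by pushing finite sets apart with automorphisms, and the dichotomy from the \(\aaa\)-invariance of \(\{\psi\equiv 0\}\) combined with Theorem~\ref{convergence}(iv). The only cosmetic differences are that the paper phrases ergodicity as mixing along a sequence \((\tau_n)\) with \(d_\ggg(i_0,\tau_n(i_0))\to\infty\) plus a monotone class argument, where you approximate an invariant event by a local one to get \(\P(E)=\P(E)^2\), and that you rederive the ``\(\psi\) vanishes everywhere or nowhere'' fact from harmonicity of \(\psi\) (the paper simply invokes Theorem~\ref{convergence}(iv), whose proof uses the same harmonicity via Proposition~\ref{coro-escape-proba}).
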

\noindent N.B: The action of \(\aaa\) on \(\hat G\) is \((\tau \hat G)(i,j)= \hat G(\tau i, \tau j)\) for \(\tau \in \aaa\).

\subsection{Relation with random Schr\"odinger operators}
Let us now relate Theorem~\ref{convergence} to the properties of the random Schr\"odinger operator $H:\beta\mapsto H_\beta$ associated
with the random potential \((\beta_j)\) under the law $\nu_V^W$,
defined in \eqref{def_Hbeta} and Proposition~\ref{kolmogorov}.
\begin{theorem}
\label{main}
Under $\nu_V^W(d\beta)$:
\begin{enumerate}[label=(\roman*)]
\item The spectrum of \(H\) is a.s. included in \([0,\infty)\).
\item The operator \(\hat G\) is the inverse of \(H\) in the following sense: for all \(i,j\in V\), a.s.
\[
\hat G(i,j)=\lim_{\epsilon>0, \epsilon \to 0} (H+\epsilon)^{-1}(i,j).
\]

\item We have \((H \psi)(i)=0\) a.s.\ for all \(i\in V\).

\item In the case of the grid \(\Z^d\) and when \(W_{i,j}=W\) is constant, \((\hat G(i,j))\) and
\((\psi(i))\) are stationary ergodic for the spacial shift. Moreover,  in the transient case, \(\psi\) is a.s. a
positive generalized eigenfunction  with eigenvalue 0 in the sense that \(H \psi=0\) and \(\psi\) has at most
polynomial growth.
More precisely, for all \(p>d\) and $C>0$,  a.s. there exists a random integer  \(K>0\) such that 
\[
\vert \psi(i) \vert\le C \| i\|_\infty^p \;\;\; \forall i\in \Z^d \hbox{ such that }\; \|i\|_\infty\ge K.
\]
\end{enumerate}
\end{theorem}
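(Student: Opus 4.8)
The plan is to treat the four assertions separately, the common tool being the path expansion of the finite-volume resolvents together with the monotonicity already present in Definition~\ref{def-G-and-psi}. As a preliminary, write $H^{(n)}_\beta$ for the restriction of $H_\beta=2\beta-P$ to $V_n$ (the $V_n\times V_n$ submatrix). The Neumann series $(2\beta-P)^{-1}=\sum_{k\ge0}\big((2\beta)^{-1}P\big)^k(2\beta)^{-1}$ identifies $\hat G^{(n)}$ with $(H^{(n)}_\beta)^{-1}$, its $(i,j)$ entry being exactly $\sum_{\sigma\in\ppp^{(n)}_{i,j}}W_\sigma/(2\beta)_\sigma$; replacing each $2\beta_k$ by $2\beta_k+\epsilon$ gives the same formula for $(H^{(n)}_\beta+\epsilon)^{-1}(i,j)$ with $(2\beta)_\sigma$ replaced by $\prod_k(2\beta_{\sigma_k}+\epsilon)$. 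The a.s.\ finiteness of $\hat G^{(n)}$ (part of Theorem~\ref{convergence}(i)) is equivalent to $H^{(n)}_\beta>0$ a.s., which I take as given. For (i), if $f$ has finite support, pick $n$ with the support inside $V_n$; then $\langle f,H_\beta f\rangle=\langle f,H^{(n)}_\beta f\rangle\ge0$. Since $H_\beta$ is essentially self-adjoint on finitely supported functions (bounded degree and potential bounded below), these form a core and the spectrum lies in $[0,\infty)$.

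For (ii), set $a_{n,\epsilon}=(H^{(n)}_\beta+\epsilon)^{-1}(i,j)\ge0$. By the two expansions, $a_{n,\epsilon}$ is nondecreasing in $n$ (more paths become available) and nondecreasing as $\epsilon\downarrow0$ (the denominators decrease), so it is a doubly monotone nonnegative array and the iterated suprema coincide: $\sup_n\sup_{\epsilon}a_{n,\epsilon}=\sup_{\epsilon}\sup_n a_{n,\epsilon}$. On the one hand $\sup_{\epsilon}a_{n,\epsilon}=\hat G^{(n)}(i,j)$ and $\sup_n\hat G^{(n)}(i,j)=\hat G(i,j)$; on the other hand $\sup_n a_{n,\epsilon}=\lim_n a_{n,\epsilon}=(H_\beta+\epsilon)^{-1}(i,j)$ by strong resolvent convergence of the Dirichlet restrictions (their forms increase to the form of $H_\beta\ge0$, which forces strong resolvent convergence and hence convergence of each matrix element). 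Combining, $\hat G(i,j)=\lim_{\epsilon\downarrow0}(H_\beta+\epsilon)^{-1}(i,j)$, which is (ii).

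For (iii), decompose a path in $\bar{\ppp}^{(n)}_i$ according to its first edge; using that such a path ends as soon as it leaves $V_n$ and that $\psi^{(n)}\equiv1$ off $V_n$, one gets the exact identity $2\beta_i\psi^{(n)}(i)=\sum_{j\sim i}W_{i,j}\psi^{(n)}(j)$ for every $i\in V_n$, that is $(H_\beta\psi^{(n)})(i)=0$ on $V_n$. Fixing $i$ and letting $n\to\infty$ (so that eventually $i\in V_n$), the sum over the finitely many neighbours and the a.s.\ convergence $\psi^{(n)}\to\psi$ from Theorem~\ref{convergence}(ii) yield $(H_\beta\psi)(i)=0$ a.s. For (iv) on $\Z^d$ with constant $W$, stationarity and ergodicity for the shift follow from Proposition~\ref{0-1law} together with the fact that the stationary 1-dependent field $\beta$ is mixing, hence ergodic, under translations, while $\psi$ and $\hat G$ are translation-covariant $\beta$-measurable functionals. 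In the transient regime Theorem~\ref{convergence}(iv) gives $\psi(i)>0$ for all $i$ and (iii) gives $H_\beta\psi=0$, so $\psi$ is a positive eigenfunction at energy $0$. For its growth, $\psi(0)$ is integrable (Theorem~\ref{convergence}(ii)) and $\psi$ is stationary, hence $\E\psi(i)=\E\psi(0)<\infty$; Markov's inequality gives $\P(\psi(i)>\|i\|^p)\le \E\psi(0)\,\|i\|^{-p}$, which is summable over $\Z^d$ as soon as $p>d$, so Borel--Cantelli produces a random $C$ with $\psi(i)\le C\|i\|^p$ for all $i$, a.s.

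The delicate point is (ii): the interchange of the limits $n\to\infty$ and $\epsilon\downarrow0$ rests on a careful functional-analytic setup — essential self-adjointness of the possibly unbounded operator $H_\beta$, the strong resolvent convergence of the Dirichlet restrictions $H^{(n)}_\beta$, and the fact that $0$ may lie at the bottom of the spectrum, so that $H_\beta$ need not be boundedly invertible and the regularization $\epsilon\downarrow0$ is genuinely needed. Making these steps rigorous is the crux; the monotonicity of the path expansions is what ultimately rescues the limit interchange, and the other three parts are then comparatively routine.
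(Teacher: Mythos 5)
Your parts (i), (iii) and (iv) follow the paper's own proofs essentially verbatim: positivity of the finite restrictions $(H_\beta)_{|V_n\times V_n}$ passed to the limit; the one-step path decomposition $2\beta_i\psi^{(n)}(i)=\sum_{j\sim i}W_{i,j}\psi^{(n)}(j)$ on $V_n$ followed by a.s.\ convergence; and Markov plus Borel--Cantelli for the polynomial growth (the paper gets $\E(\psi(i))\le 1$ from Fatou and the martingale normalization rather than from stationarity of $\psi$ --- an immaterial difference). The genuine divergence is in (ii). The paper never introduces the finite-volume resolvents $(H^{(n)}_\beta+\epsilon)^{-1}$: it works directly in infinite volume, observing that the path sum $\hat G_\epsilon(i,j)=\sum_{\sigma\in\mathcal{P}^V_{i,j}}W_\sigma/(2\beta+\epsilon)_\sigma$ solves $(H_\beta+\epsilon)\hat G_\epsilon=\mathrm{Id}$, that the solution is unique because $-\epsilon$ lies strictly outside the spectrum, that $\hat G_\epsilon\le \hat G<\infty$ by Theorem~\ref{convergence}(i), and that $\hat G_\epsilon$ increases to $\hat G$ as $\epsilon\downarrow 0$ by monotone convergence in $\epsilon$ alone. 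Your doubly monotone array in $(n,\epsilon)$ reaches the same conclusion and trades the paper's verify-and-invoke-uniqueness step for the abstract fact that the Dirichlet restrictions converge in strong resolvent sense; this buys a cleaner limit interchange but imports functional-analytic machinery the paper deliberately avoids in favor of bare path sums. Two caveats on your version: with the usual convention (quadratic form set to $+\infty$ off functions supported in $V_n$), the forms of the Dirichlet restrictions \emph{decrease} as $n$ grows --- equivalently the resolvents increase --- so the relevant tool is the Kato--Simon monotone convergence theorem for decreasing forms, whose limit operator is the Friedrichs extension of $H_\beta$ on finitely supported functions, not an increasing-forms statement as you wrote; and your essential self-adjointness claim invokes bounded degree, which the paper does not assume (only local finiteness). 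Neither caveat is fatal --- the paper is equally silent about which self-adjoint realization of $H_\beta$ is meant, and on $\Z^d$, the only setting where (iv) is used, the degree is bounded --- but if you keep your route you should fix the monotonicity direction and state explicitly that $H_\beta$ denotes the Friedrichs extension when essential self-adjointness is unavailable.
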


\subsection{Functional central limit theorem}
We denote by \((\tilde{Z}_{n})_{n\in \N}\) the discrete time process that describes the successive jumps of \((Z_{t})_{t\in \R_+}\). From Theorem~\ref{convergence}~\ref{convergence_iii}, under $\P^{VRJP}_{i_0}$,  \(\tilde{Z}_{n}\) is a mixture of Markov chains starting from $i_0$ and with conductances
\begin{equation}
\label{mixture-discrete-VRJP}
W_{i,j}G(i_{0},i)G(i_{0},j),
\end{equation}
under the probability distribution $\nu_V^W(d\beta,d\gamma)$.

We prove below a functional central limit theorem for the discrete time VRJP on $\Z^d$, $d\ge 3$, at weak reinforcement (i.e.\ for \(W\) large enough).

\begin{theorem}
\label{thm-clt}
Consider the 
cubic graph \(\mathbb{Z}^{d}\), \(d\ge 3\), with constant conductances \(W_{i,j}=W\).
Denote
\[\tilde{Z}^{(n)}_{t}=\frac{\tilde{Z}_{[nt]}}{\sqrt{n}}.\]
There exists \(\lambda_2>0\) such that if \(W>\lambda_2\),
the discrete time VRJP
satisfies a functional central limit theorem, i.e.
under \(\P_{0}^{\text{VRJP}}\), for any real $0<T<\infty$,
\((\tilde{Z}^{(n)}_{t})_{t\in [0,T]}\) converges in law (for the Skorokhod topology)
to a \(d\)-dimensional Brownian motion \((B_{t})_{t\in [0,T]}\) 
with non degenerate isotropic diffusion matrix \(\sigma^2 Id\), for some \(0<\sigma^2<\infty\).
\end{theorem}

\subsection{Consequences for the Edge Reinforced Random Walk (ERRW)}\label{results-ERRW}

The Edge Reinforced Random Walk (ERRW) is a famous discrete time process introduced in 1986 by Coppersmith and Diaconis,~\cite{coppersmith1987random,rolles2000edge}.

Endow the edges of the graph $\ggg=(V,E)$ with some positive weights \((a_e)_{e\in E} \).
Let \((X_n)_{n\in\mathbb{N}}\) be a random process that takes values in \(V\), and let
\(\mathcal{F}_n=\sigma(X_0,\ldots,X_n)\) be the filtration of its past. For any \(e\in E\), \(n\in\mathbb{N}\), let
\begin{equation}\label{Zn}
N_n(e)=a_e+ \sum_{k=1}^n\mathds{1}_{\{\{X_{k-1},X_k\}=e\}}
\end{equation}
be the number of crossings of the (undirected) edge \(e\) up to time \(n\) plus the initial weight \(a_e\).

Then \((X_n)_{n\in\mathbb{N}}\) is called Edge Reinforced Random Walk (ERRW) with starting point \(i_0\in V\) and weights \((a_e)_{e\in E}\),
if  \(X_0=i_0\) and,  for all \(n\in\mathbb{N}\),
\begin{equation}
\label{def-errw}
\mathbb{P}(X_{n+1}=j~|~\mathcal{F}_n)=\mathds{1}_{\{j\sim X_n\}}\frac{N_n(\{X_n,j\})}
{\sum_{k\sim X_n} N_n(\{X_n,k\})}.
\end{equation}
We denote by \(\mathbb{P}^{ERRW}_{i_0}\) the law of the ERRW starting from the initial vertex \(i_0\). We will assume that the ERRW is defined on the canonical space $V^\N$, i.e. that $(X_n)_{n\in \N}$ is the canonical process on $V^\N$.

Important progress has been done in the last ten years in the understanding of this process, c.f. e.g. \cite{angel2014localization,disertori2014transience,merkl2009recurrence,sabot2011edge}.
In particular, in was proved in 2012 by Sabot, Tarr\`es, \cite{sabot2011edge}, and Angel, Crawford, Kozma,
\cite{angel2014localization}, on any graph with bounded degree at strong reinforcement (i.e.\ for \(a_e<\tilde\lambda_0\) for
some fixed \(\tilde\lambda_0>0\)) that the ERRW is a mixture of positive recurrent
Markov chains. It was proved by Disertori, Sabot, Tarr\`es \cite{disertori2014transience} that on \(\Z^d\), \(d\ge 3\), the ERRW is transient at weak reinforcement
(i.e.\ for \(a_e>\tilde\lambda_1\) for some fixed \(\tilde\lambda_1<\infty\)).

From \cite[Theorem 1]{sabot2011edge}, we know that the ERRW has the law of a VRJP in independent conductances.
More precisely, consider \((W_e)_{e\in E}\) as independent random variables with gamma distribution with parameters \((a_e, 1)\).
Consider the VRJP in conductances \((W_e)_{e\in E}\) and its underlying discrete time process \((\tilde Z_n)\). Then the annealed
law of \((\tilde Z_n)\) (after expectation with respect to \(W\)) is that of the ERRW \((X_n)\) with initial weights \((a_e)\).
Hence, we can apply Theorem~\ref{convergence} at fixed \(W\) and then integrate on \(W\).
We thus consider the joint law  \(\tilde \nu_V^{a}(dW,d\beta,d\gamma)\) on $(\R_+^*)^E\times (\R_+^*)^V\times\R_+^*$ obtained from $\nu_V^W(d\beta,d\gamma)$ after randomization with respect to $W$. More formally, let $\tilde\nu^{a}_V(dW)$ be the probability distribution on $(\R_+^*)^E$ such that under $\tilde\nu^a_V(dW)$ the random variables $W\mapsto W_e$ are independent with gamma distribution with parameters $(a_e,1)$, then $\tilde\nu_V^a(dW,d\beta,d\gamma)$ is the probability distribution on $(\R_+^*)^E\times (\R_+^*)^V\times\R_+^*$ such that for any bounded measurable test function \(F\), 
\[
\int F(W,\beta,\gamma)  \tilde \nu_V^{a}(dW,d\beta,d\gamma)=\int\left( \int F(W,\beta,\gamma) \nu_V^{W}(d\beta,d\gamma)\right)\tilde\nu_V^a(dW).
\]
In the sequel, \( \tilde \nu_V^{a}(dW,d\beta)\), \( \tilde \nu_V^{a}(d\beta)\) will denote the corresponding marginal distributions, and $\tilde\nu^a_V(dW)$ is the $W$ marginal. (By definition, $\tilde \nu_V^{a}(dW,d\beta)$ is supported on the set of $(W,\beta)$ such that $\beta\in \ddd_V^W$.)
 From Theorem~\ref{convergence}, we see that the ERRW starting from \(i_0\) is a mixture
of reversible Markov chains with conductances
\begin{eqnarray}\label{rep-ERRW}
x_{i,j}=W_{i,j}G(i_0,i)G(i_0,j),
\end{eqnarray}
where \(G\) is defined in Theorem~\ref{convergence}, and \((W,\beta,\gamma)\) are distributed according to
\(\tilde \nu_V^{a}(dW,d\beta,d\gamma)\). More formally, if $\tilde P^x_{i_0}$ denotes the law of the Markov chain starting at $i_0$ and with conductances $(x_{i,j})_{i\sim j}$, then
$$
\P^{ERRW}_{i_0}(\cdot)=\int \tilde P^{x}_{i_0}(\cdot)\tilde\nu_V^a(dW,d\beta,d\gamma).
$$

An important point is that we keep the 1-dependence of the field \(\beta\),  after taking expectation with respect to
\(W\).
\begin{proposition}\label{indep-ERRW}
Under \(\tilde \nu_V^{a}(d\beta)\), \((\beta_j)_{j\in V}\) is 1-dependent: if \(U, U' \subset V\) are such that \(\operatorname{d}_{\mathcal{G}}(U, U’)\geq 2\), then \((\beta_i)_{i\in U}\) and \((\beta_j)_{j\in U’}\) are independent.
\end{proposition}
\begin{proof} Indeed, from Proposition~\ref{kolmogorov}, the Laplace transform of \((\beta_i)_{i\in U}\) under $\nu_V^W(d\beta)$ only involves the conductances \(W_{i,j}\) for
\(i\) or \(j\) in \(U\). This implies that, if \(\operatorname{d}_{\mathcal{G}}(U, U’)\geq 2\), the joint Laplace transform of \((\beta_i)_{i\in U}\) and \((\beta_i)_{i\in U'}\) is still the product of Laplace transforms
even after taking expectation with respect to the random variables \((W_e)\), i.e. under $\tilde \nu_V^a(\d\beta)$.
\end{proof}
This yields a counterpart of Proposition~\ref{0-1law} for the ERRW.
\begin{proposition}\label{0-1law-ERRW}
Assume \((\ggg,(a_{i,j}))\) is vertex transitive with automorphism group \(\aaa\), and \(\ggg\) infinite. Then under the distribution \(\tilde\nu_V^{a}(dW,d\beta)\),
the random variables \((W_e)_{e\in E}\), \((\beta_i)_{i\in V}\), \((\psi(i))_{i\in V}\), \((\hat G(i,j))_{i,j\in V}\) are stationary and ergodic for the
group of transformations \(\aaa\). Moreover,  the ERRW is either recurrent or transient, i.e.
\[
\P^{ERRW}_{i_0}( \hbox{ every vertex is visited i.o.\ })=1, \hbox{ or } \P^{ERRW}_{i_0}( \hbox{ every vertex is visited f.o. })=1.
\]
In the first case \(\psi(i)=0\) for all \(i\in V\), a.s., in the second case \(\psi(i)>0\) for all \(i\in V\), a.s.
\end{proposition}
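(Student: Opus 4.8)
The plan is to mirror the proof of Proposition~\ref{0-1law}, replacing the measure $\nu_V^W$ by the annealed measure $\tilde\nu_V^{a}$ and using Proposition~\ref{indep-ERRW} in place of the $1$-dependence furnished by Proposition~\ref{kolmogorov}. By the discussion preceding Proposition~\ref{indep-ERRW}, the ERRW from $i_0$ is a mixture of the reversible chains with conductances $x_{i,j}=W_{i,j}G(i_0,i)G(i_0,j)$ under $(W,\beta,\gamma)\sim\tilde\nu_V^{a}$; recurrence versus transience of the ERRW therefore amounts to recurrence versus transience of the quenched chain $P^{\beta,\gamma,i_0}$ for $\tilde\nu_V^{a}$-almost every $(W,\beta,\gamma)$, which by Theorem~\ref{convergence}(\ref{iv}) is governed entirely by the field $\psi$. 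So it suffices to show that $(W,\beta)$ is stationary and ergodic under $\aaa$ and then to run the $\psi$-dichotomy.

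First I would establish stationarity. For $\tau\in\aaa$ one has $a_{\tau i,\tau j}=a_{i,j}$, so the independent family $(W_e)$ with $W_e\sim\mathrm{Gamma}(a_e)$ is invariant in law under $\tau$. The construction of $\beta$ from $W$ is $\aaa$-equivariant, since the Laplace transform of Proposition~\ref{kolmogorov} is manifestly covariant under graph automorphisms; hence $\nu_V^{\tau W}$ is the pushforward of $\nu_V^{W}$ by $\tau$. Combining these two facts shows that $\tilde\nu_V^{a}(dW,d\beta)$ is $\aaa$-invariant. As $\hat G^{(n)}$ and $\psi^{(n)}$, and hence (by Theorem~\ref{convergence}, whose limits are independent of the exhausting sequence) their a.s.\ limits $\hat G$ and $\psi$, are equivariant measurable functions of $(W,\beta)$, the whole field $(W,\beta,\psi,\hat G)$ is stationary under $\aaa$, with the action $(\tau\hat G)(i,j)=\hat G(\tau i,\tau j)$.

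Next I would deduce ergodicity from \emph{joint} finite-dependence of $(W,\beta)$ together with transitivity of $\aaa$. The edge field $(W_e)$ is independent across edges; conditionally on $W$ the field $\beta$ has law $\nu_V^{W}$, which is $1$-dependent and whose marginal $\beta_U$ has a conditional law depending only on the conductances incident to $U$. Consequently, for finite sets $U,U'$ with $\operatorname{d}_\ggg(U,U')$ large enough, conditioning on $W$ factorizes the joint law of the pair $(\beta_U,\ \text{conductances incident to }U)$ and the pair $(\beta_{U'},\ \text{conductances incident to }U')$, so $(W,\beta)$ is finitely dependent. Because $\ggg$ is infinite and $\aaa$ is vertex transitive, for any two finite sets one can find $\tau\in\aaa$ carrying one of them to arbitrarily large graph distance from the other (send a base point far away; automorphisms preserve distance and the orbit is infinite). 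Finite-dependence then renders local events exactly independent under such $\tau$, which yields mixing and hence ergodicity of $(W,\beta)$, and this transfers to $\psi$ and $\hat G$. The main obstacle is precisely this step: one must verify that the annealed pair $(W,\beta)$ is genuinely finitely dependent—not merely that $\beta$ is $1$-dependent—and that $\aaa$ separates arbitrary finite sets, so that mixing, and therefore ergodicity, truly follows.

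Finally I would invoke the $\psi$-dichotomy. By Theorem~\ref{main}(iii) we have $H_\beta\psi=0$ and $\psi\ge0$. If $\psi(i_0)=0$ for some $i_0$, then $(H_\beta\psi)(i_0)=2\beta_{i_0}\psi(i_0)-\sum_{j\sim i_0}W_{i_0,j}\psi(j)=0$ forces $\psi(j)=0$ for every neighbour $j$, and connectedness of $\ggg$ gives $\psi\equiv0$; thus a.s.\ either $\psi\equiv0$ or $\psi>0$ everywhere. The event $\{\psi\equiv0\}$ is $\aaa$-invariant, so by ergodicity it has $\tilde\nu_V^{a}$-probability $0$ or $1$. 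If it has probability $1$, then $\psi\equiv0$ a.s.\ and Theorem~\ref{convergence}(\ref{iv}) makes the quenched chain recurrent, i.e.\ the ERRW visits every vertex infinitely often a.s.; if it has probability $0$, then $\psi>0$ everywhere a.s.\ and the quenched chain is transient, i.e.\ the ERRW visits every vertex finitely often a.s. This gives the stated alternative together with the identification of $\psi$ in each case.
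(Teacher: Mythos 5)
Your proof is correct and follows essentially the same route as the paper's, which simply transplants the proof of Proposition~\ref{0-1law} (stationarity from the Laplace transform, mixing from finite dependence along automorphisms carrying a base point to infinity, triviality of the $\aaa$-invariant event $\{\psi\equiv 0\}$, then Theorem~\ref{convergence}(iv)). Your explicit check that the annealed pair $(W,\beta)$ is jointly finitely dependent---needed since $\psi$ and $\hat G$ depend on $W$ as well as $\beta$---is precisely the point the paper leaves implicit in its one-line proof (and which the proof of Proposition~\ref{indep-ERRW} supports), and your harmonicity-plus-connectedness argument for the dichotomy $\psi\equiv 0$ or $\psi>0$ matches what Theorem~\ref{convergence}(iv) encapsulates.
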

\noindent N.B: The action of \(\aaa\) on \(\hat G\) and \(W\)  is \((\tau \hat G)(i,j)= \hat G(\tau i, \tau j)\), \(\tau W_{i,j}=W_{\tau i, \tau j}\)
 for \(\tau \in \aaa\).
 \begin{remark}
In \cite{merkl2007random}, it was proved on infinite graphs that the ERRW is a mixture of Markov chains,
obtained as a weak limit of the mixing law of the ERRW on finite approximating graphs.
The difference in the representation we give in (\ref{rep-ERRW}) is that the random variables \(\psi\), \(\hat G\) are obtained as almost sure limits
and hence are measurable functions of the random variables \(\beta\).
This yields stationarity and ergodicity, which are the key ingredients in the 0-1 law, and in forthcoming Theorems~\ref{FCLT-ERRW}
and~\ref{rec-ERRW}.
\end{remark}
\begin{remark}
It seems that this 0-1 law is new, both for the VRJP and the ERRW.
In \cite{merkl2007random}, it was proved that if the ERRW comes back with probability 1 to its starting point then
it visits infinitely often all points, a.s., which is a weaker result.
This was proved using the representation of the ERRW as mixture of Markov chains
of \cite{merkl2007random}. (A short proof of this last result can also be given, c.f. \cite{tournier2009note}.)
 \end{remark}

 We now give a counterpart of Theorem~\ref{thm-clt} for the ERRW. It is a consequence of Theorem~\ref{convergence} and of
 the delocalization result proved by Disertori, Sabot, Tarr\`es in~\cite{disertori2014transience}.
 \begin{theorem}\label{FCLT-ERRW}
Consider the  
cubic graph \(\mathbb{Z}^{d}\), \(d\ge 3\), with constant weights \(a_{i,j}=a\). Denote
\[{X}^{(n)}_{t}=\frac{{X}_{[nt]}}{\sqrt{n}}.\]
There exists \(\tilde\lambda_2>0\) such that if \(a>\tilde\lambda_2\),
the ERRW
satisfies a functional central limit theorem, i.e.
under \(\P_{0}^{\text{ERRW}}\), for any real $0<T<\infty$,
\(({X}^{(n)}_{t})_{t\in [0,T]}\) converges in law (for the Skorokhod topology)
to a \(d\)-dimensional Brownian motion \((B_{t})_{t\in [0,T]}\)
 with non degenerate isotropic diffusion matrix \(\sigma^2 Id\), for some \(0<\sigma^2<\infty\).
\end{theorem}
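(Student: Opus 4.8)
The plan is to read off from (\ref{rep-ERRW}) that, in the transient regime at weak reinforcement, the ERRW is a reversible random walk in a genuinely stationary ergodic random environment, and then to run the standard ``environment viewed from the particle'' argument. By the discussion preceding Proposition~\ref{indep-ERRW}, under \(\P_0^{\text{ERRW}}\) the walk \((X_n)\) is a mixture of discrete-time reversible Markov chains on \(\mathbb{Z}^d\) with conductances \(x_{i,j}=W_{i,j}G(0,i)G(0,j)\), where \((W,\beta,\gamma)\sim\tilde\nu_V^{a}\). The key is that these conductances are, up to a diffusively negligible ground-state transform, equivalent to the \emph{stationary} conductances \(c_{i,j}=W_{i,j}\psi(i)\psi(j)\). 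Indeed, since \(H_\beta\psi=0\) gives \(\sum_{j}W_{i,j}\psi(j)=2\beta_i\psi(i)\), the chain with conductances \(c_{i,j}\) has transition probabilities \(p(i,j)=W_{i,j}\psi(j)/(2\beta_i\psi(i))\) and reversing measure \(\pi(i)=2\beta_i\psi(i)^2\), both stationary ergodic functions of \((W,\beta,\gamma)\) by Proposition~\ref{0-1law-ERRW}, with \(\psi>0\) a.s.\ by Theorem~\ref{convergence}(\ref{iv}). The true conductances \(x_{i,j}\) differ from \(c_{i,j}\) only through the Doob factor \(h(i)=G(0,i)/\psi(i)=\hat G(0,i)/\psi(i)+\psi(0)/(2\gamma)\), which is bounded and tends to the constant \(\psi(0)/(2\gamma)\) at infinity because \(\hat G(0,\cdot)\) is the decaying Green function of \(H_\beta\); such a transform is a local perturbation that does not affect the diffusive scaling limit. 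This reduces Theorem~\ref{FCLT-ERRW} to an annealed FCLT for a reversible walk among the stationary ergodic conductances \(c_{i,j}\), exactly the framework of Theorem~\ref{thm-clt}, and I would organize the argument so that both theorems follow from the same proof, with the environment \((\beta,\gamma)\) (constant \(W\)) replaced by \((W,\beta,\gamma)\).

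First I would set up the environment seen from the particle \(\omega_n=\tau_{X_n}\omega\), a Markov chain reversible with respect to \(\pi(0)\,d\tilde\nu_V^{a}\), whose ergodicity follows from ergodicity of the static environment. Next I would construct the corrector \(\chi\), the \(L^2\) solution of the harmonic-coordinate equation making \(i\mapsto i+\chi(\omega,i)\) harmonic for the generator associated with \((c_{i,j})\), via a Lax--Milgram argument in the Hilbert space of square-integrable gradients. Then \(M_n=X_n-\chi(\omega,X_n)\) is a martingale, and the Kipnis--Varadhan / De Masi--Ferrari--Goldstein--Wick invariance principle yields a Brownian limit for \(M_{[nt]}/\sqrt n\); it remains to check that \(\chi(\omega,X_{[nt]})/\sqrt n\to 0\), which follows from the \(L^2\) bound on \(\chi\), the ergodic theorem and a maximal inequality. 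Non-degeneracy \(\sigma^2>0\) and isotropy (the matrix is \(\sigma^2 Id\)) come from the lattice symmetries contained in \(\aaa\): the effective diffusion matrix commutes with coordinate permutations and sign changes, and a uniform lower bound on the effective conductance rules out degeneracy.

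The crux, and the only place where weak reinforcement and \(d\ge 3\) enter, is verifying the integrability hypotheses of this machinery for the conductances \(c_{i,j}=W_{i,j}\psi(i)\psi(j)\): one needs, under \(\tilde\nu_V^{a}\), finiteness of \(\E[\pi(0)]=\E[2\beta_0\psi(0)^2]\) together with inverse-moment control, so that the corrector lives in \(L^2\) and the effective diffusivity is finite and strictly positive; one also needs to control the perturbation \(h\) (hence moments of \(\hat G(0,\cdot)\) and of \(\psi\)) to justify that the \(\hat G\)-contribution is negligible. These are precisely the delocalization estimates of Disertori, Sabot, Tarr\`es in \cite{disertori2014transience} for the ERRW at large \(a\), the ERRW counterpart of the \cite{disertori2010quasi} bounds used for Theorem~\ref{thm-clt}. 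I expect propagating those quasi-diffusive bounds into the sharp \(L^p\) and inverse-moment estimates required by the corrector construction and by the lower bound on \(\sigma^2\) to be the main obstacle; everything else is the by-now-standard reversible random-conductance machinery applied to a stationary, ergodic, 1-dependent environment.
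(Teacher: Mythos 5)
Your overall architecture coincides with the paper's: in the transient regime you reduce to the reversible chain with stationary ergodic conductances $c_{i,j}=W_{i,j}\psi(i)\psi(j)$ (stationarity and ergodicity from Proposition~\ref{0-1law-ERRW}, positivity of $\psi$ from transience), you invoke an annealed invariance principle for stationary ergodic conductances, and you feed in the delocalization estimates of \cite{disertori2014transience}. Two small economies the paper makes that you could too: it does not rebuild the corrector/Kipnis--Varadhan machinery but cites Theorems 4.5 and 4.6 of \cite{de1989invariance}, whose only hypotheses are $\E(W_{i,j}\psi(i)\psi(j))<\infty$ and $\E\bigl(1/(W_{i,j}\psi(i)\psi(j))\bigr)<\infty$; and it verifies these via moments of $\psi^{(n)}(i)=e^{u^{(n)}(\delta_n,i)}$ on the wired graphs (Lemma~\ref{estimates-psi}), with no quantitative control of $\hat G$ needed anywhere.

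The genuine gap is your bridging step between the actual conductances $x_{i,j}=W_{i,j}G(0,i)G(0,j)=c_{i,j}h(i)h(j)$, $h(i)=G(0,i)/\psi(i)$, and the stationary chain. You assert that $h$ is bounded, tends to the constant $\psi(0)/(2\gamma)$ at infinity, and that the resulting transform is ``a local perturbation that does not affect the diffusive scaling limit.'' First, the convergence $\hat G(0,i)/\psi(i)\to 0$ is established nowhere: the paper only has the maximum-principle bound $\hat G(0,i)/\psi(i)\le \hat G(0,0)/\psi(0)$ (Remark~\ref{max-principle}), which gives boundedness but no decay, and pointwise decay of $\hat G(0,\cdot)$ against a stationary $\psi$ is exactly the kind of quantitative estimate the paper is structured to avoid. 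Second, and more fundamentally, the transform is not local: since $\sum_j W_{i,j}G(0,j)=2\beta_i G(0,i)-\mathds{1}_{i=0}$, the function $h$ is harmonic for the $c$-chain off $0$ and strictly superharmonic at $0$, so the $x$-chain is a genuine Doob-type $h$-transform of the $c$-chain everywhere --- a global, conditioning-type change of law, and asserting it preserves the invariance principle is precisely what must be proved. The paper's bridge is exact rather than perturbative: conditioning \emph{either} chain on $\{\tau_{0}^{+}=\infty\}$ yields the identical transition kernel $W_{i,j}\check G(0,j)/\sum_{l\sim i}W_{i,l}\check G(0,l)$ with $\check G(i,j)=\hat G(i,i)\psi(j)-\hat G(i,j)\psi(i)$ (Corollary~\ref{coro-doob-h}(iii) together with the computation of $h^{\psi}$ in Section~\ref{ss_clt}); then, with the last exit time $\xi_0=\sup\{n:\,X_n=0\}$ (a.s.\ finite by transience), the strong Markov property shows $(X\circ\theta_{\xi_0})$ has the same law under $\tilde P^{\psi}_{0}$ and $\tilde P^{\beta,\gamma,0}_{0}$, and the Skorokhod bound $|X^{(n)}_t-\Upsilon^{(n)}_t|\le \xi_0/\sqrt n\to 0$ transfers the FCLT. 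Replacing your ``local perturbation'' claim by this two-sided conditioning plus last-exit identification closes the gap, and at the same time removes your stated need for moment control on $\hat G$ and on $h$.
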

Finally, we can deduce recurrence of the ERRW in dimension 2 from Theorem \ref{convergence}, Proposition \ref{0-1law-ERRW} and 
the estimates obtained by Merkl and Rolles in \cite{merklbounding,merkl2009recurrence}\footnote{We
are grateful to Franz Merkl and Silke Rolles for a useful discussion on that subject}.
\begin{theorem}\label{rec-ERRW}
The ERRW \(({X}_{n})_{n\ge 0}\) on \(\mathbb{Z}^{2}\) with constant weights \(a_{i,j}=a\) is a.s.\ recurrent, i.e.
\[
\P^{ERRW}_{0}\left( \hbox{ every vertex is visited infinitely often }\right)=1.
\]
\end{theorem}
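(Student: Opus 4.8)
The plan is to combine the zero–one law of Proposition~\ref{0-1law-ERRW} with the martingale structure of Theorem~\ref{convergence} and the two-dimensional estimates of Merkl and Rolles. Since $(\ggg,(a_{i,j}))=(\Z^2,a)$ is vertex transitive, Proposition~\ref{0-1law-ERRW} applies and the ERRW is a.s.\ recurrent or a.s.\ transient; by its last assertion, recurrence holds exactly when $\psi(i_0)=0$ a.s.\ and transience exactly when $\psi(i_0)>0$ a.s. Consequently it suffices to rule out transience, and by this dichotomy it is enough to prove
\[
\E_{\tilde\nu_V^{a}}\Big(\sqrt{\psi(i_0)}\Big)=0,
\]
which forces $\psi(i_0)=0$ a.s., hence recurrence.

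I would reduce this to a convergence statement for the approximating martingale. By Theorem~\ref{convergence}(ii), $\psi^{(n)}(i_0)$ is a positive $\fff_n$-martingale converging a.s.\ to $\psi(i_0)$; conditioning on $W$, this remains true under $\tilde\nu_V^{a}$, the required $1$-dependence being preserved by Proposition~\ref{indep-ERRW}. As $x\mapsto\sqrt{x}$ is concave, conditional Jensen shows that $\sqrt{\psi^{(n)}(i_0)}$ is a positive supermartingale, so $n\mapsto\E_{\tilde\nu_V^{a}}\big(\sqrt{\psi^{(n)}(i_0)}\big)$ is non-increasing and, by Fatou, its limit dominates $\E_{\tilde\nu_V^{a}}(\sqrt{\psi(i_0)})$. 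Hence it is enough to show $\E_{\tilde\nu_V^{a}}\big(\sqrt{\psi^{(n)}(i_0)}\big)\to 0$. Decomposing each exit path of $\bar{\ppp}^{(n)}_{i_0}$ at its last vertex in $V_n$ yields the pathwise identity
\[
\psi^{(n)}(i_0)=\sum_{x\in V_n}\hat G^{(n)}(i_0,x)\,c^{(n)}_x,\qquad c^{(n)}_x=\sum_{x'\sim x,\ x'\notin V_n}W_{x,x'},
\]
so that only the boundary layer $\{x\in V_n:\ c^{(n)}_x>0\}$ contributes.

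From the subadditivity of the square root, applied pathwise, I get
\[
\E_{\tilde\nu_V^{a}}\Big(\sqrt{\psi^{(n)}(i_0)}\Big)\le\sum_{x:\,c^{(n)}_x>0}\E_{\tilde\nu_V^{a}}\Big(\sqrt{\hat G^{(n)}(i_0,x)\,c^{(n)}_x}\Big),
\]
and this is where the input \cite{merklbounding,merkl2009recurrence} enters. The point is that in two dimensions integer moments of $\hat G(i_0,x)$ are expected to diverge, so one cannot use $\E(\hat G)$; instead, the Merkl--Rolles polynomial bounds should be translated into a decay estimate for the \emph{fractional} moment $\E_{\tilde\nu_V^{a}}(\sqrt{\hat G^{(n)}(i_0,x)})$ (after peeling off $c^{(n)}_x$ by a Cauchy--Schwarz step using its finite moments and the representation~(\ref{rep-ERRW})), uniform in $n$ and of order $\|x\|^{-\alpha}$. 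On $\Z^2$ the boundary of a box of side $n$ carries $O(n)$ vertices at distance of order $n$, so if $\alpha>1$ the right-hand side is $O(n^{1-\alpha})\to0$, which closes the argument.

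The hard part is exactly this last translation: identifying the environment functional actually estimated in \cite{merklbounding} and matching it, through the representation~(\ref{rep-ERRW}), with the square-root Green function $\E_{\tilde\nu_V^{a}}(\sqrt{\hat G(i_0,x)})$, with a decay exponent strictly beating the linear boundary growth of $\Z^2$. Since two dimensions is critical, the crude bound $\sqrt{\sum}\le\sum\sqrt{}$ may well be too lossy at the borderline exponent, and I expect one may have to refine it into a second-moment or correlation estimate that exploits the $1$-dependence of $\beta$ (Proposition~\ref{indep-ERRW}) to retain enough decay. The remaining ingredients—the dichotomy, the supermartingale monotonicity, and the boundary decomposition of $\psi^{(n)}$—are soft and should go through directly.
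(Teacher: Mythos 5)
There is a genuine gap, and it sits exactly where you yourself flag ``the hard part.'' Your soft ingredients are all correct: the dichotomy from Proposition~\ref{0-1law-ERRW}, the conditional-Jensen supermartingale property of \(\sqrt{\psi^{(n)}(i_0)}\), and the boundary decomposition \(\psi^{(n)}(i_0)=\sum_{x\in V_n}\hat G^{(n)}(i_0,x)c^{(n)}_x\) (which is just \(\psi^{(n)}_{V_n}=((H_\beta)_{V_n,V_n})^{-1}P_{V_n,V_n^c}1_{V_n^c}\), as used in Section~\ref{ss_martingale}). But the quantitative core cannot be supplied. To beat the \(O(n)\) boundary of a two-dimensional box you need a uniform fractional-moment bound \(\E_{\tilde\nu_V^{a}}(\sqrt{\hat G^{(n)}(i_0,x)})=O(\|x\|^{-\alpha})\) with \(\alpha>1\). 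The actual Merkl--Rolles input is an estimate on ratios of \emph{conductances} of the mixing measure, \(\E((x_\ell/x_0)^{1/4})\le c(a)\vert\ell\vert_\infty^{-\xi(a)}\), where \(\xi(a)>0\) is not quantified and for general (in particular large) \(a\) there is no reason for it to exceed \(1\); the paper explicitly remarks that when \(0<\xi<1\) this estimate ``does not give by itself enough information to prove recurrence.'' There is no known translation of this ratio bound into Green-function decay with an exponent beating the boundary growth, so your summation scheme cannot close at the critical dimension --- precisely the failure mode you anticipated, but without the refinement that would repair it.

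The paper's proof removes the exponent threshold altogether instead of trying to meet it. It bounds the conductance ratio from below by a stationary functional: \(x_\ell/x_0\ge \frac{\psi(0)^2}{2\gamma^2 G(0,0)(2\beta_0 G(0,0)-1)}\,\beta_\ell\psi(\ell)^2\), where the prefactor is a single \(\ell\)-independent random variable and \(\beta_\ell\psi(\ell)^2\) is stationary by Proposition~\ref{0-1law-ERRW}. Assuming transience, the zero--one law gives \(\psi>0\) a.s.; choosing \(\eta\) with \(\P(\hbox{prefactor}\le\eta)\le\demi\) and applying Markov's inequality to the Merkl--Rolles bound yields \(\P(\beta_0\psi(0)^2\le\epsilon/\eta)\ge \demi-\epsilon^{-1/4}c(a)\vert\ell\vert_\infty^{-\xi(a)}\); sending \(\ell\to\infty\) and then \(\epsilon\to 0\) forces \(\P(\beta_0\psi(0)^2=0)\ge\demi\), contradicting \(\psi(0)>0\) a.s. Note that \emph{any} \(\xi(a)>0\) suffices: stationarity converts arbitrarily slow polynomial decay into a contradiction with transience. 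Two further technicalities your sketch would also need: the finite-volume Merkl--Rolles estimate requires a graph invariant under a symmetry exchanging \(0\) and \(\ell\), which the paper arranges by taking wired boxes \(V_n=B(\ell/2,n)\) centered at \(\ell/2\); and the infinite-volume estimate (\ref{poly-decrease}) is then obtained from the a.s.\ convergence (\ref{xn-x}) via Fatou. If you want to salvage your route, the fix is the paper's own move: abandon the boundary summation and use stationarity plus the dichotomy to exploit the decay only qualitatively.
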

In \cite{merklbounding,merkl2009recurrence}, by a Mermin-Wagner type argument, Merkl and Rolles proved a polynomial decrease of the form
\begin{eqnarray}\label{polynomial}
\E\left( \left({x_\ell\over x_0}\right)^{{1\over 4}} \right)\le c(a) \vert \ell\vert^{-\xi(a)},
\end{eqnarray}
for some constants \(c(a)>0\), \(\xi(a)>0\), depending only on \(a\), and
where \(x_{\ell}\) is the conductance at the site \(\ell\) for the mixing measure of the ERRW, uniformly for a sequence
of finite approximating graphs.
When \(0<\xi<1\), it does not give by itself enough information to prove recurrence.
It was used in the case of a diluted 2-dimensional graph to prove positive recurrent at strong reinforcement.
The extra information given by the representation (\ref{rep-ERRW}) and the stationarity of \(\psi\), implies that
the polynomial estimate (\ref{polynomial}) is incompatible with \(\psi(i)>0\) and hence is incompatible with transience.
Detailed arguments are provided in Section~\ref{ss_proof-rec}.
\begin{remark}
We expect similarly that the 2-dimensional VRJP with constant conductances $W_{i,j}=W>0$ is recurrent.
This would be implied by an estimate of the type  (\ref{polynomial}) for the mixing field of the VRJP, which is still not
available.
More precisely, we can see from the proof of Theorem \ref{rec-ERRW} in Section \ref{ss_proof-rec}, that recurrence of the
2-dimensional VRJP would be implied by Theorem \ref{convergence}, Proposition \ref{0-1law}, and an estimate of the type
\[
\E\left(e^{\eta(u_\ell-u_0)}\right)\le \epsilon(\| \ell \|_{\infty}),
\]
for $\eta>0$ and $\epsilon(n)$ a positive function such that $\lim_{n\to \infty } \epsilon(n)=0$, where
$(u_j)$ is the mixing field of the VRJP starting from 0 (c.f. Theorem\ref{thm_vrjp_u_ST}) on finite boxes with wired boundary condition
as in Section \ref{ss_Kolmogorov}. 
We learned from G. Kozma and R. Peled that they have a proof of
such an estimate.
\end{remark}

\subsection{Open questions}
The most important question certainly concerns the relation between the properties of the VRJP and the spectral properties of the
random Schr\"odinger operator \(H_\beta\).
For example on \(\Z^d\) with constant weights \(W_{i,j}=W\), is recurrence/transience of the VRJP related to the
localized/delocalized regimes of \(H_\beta\)? A more precise question would be: does the transient regime
of the VRJP coincide with the existence of extended states at least at the bottom of the spectrum of \(H_\beta\)?
It might at first seem inconsistent to expect extended states at the bottom of the spectrum since the Anderson model with
i.i.d.\ potential is expected to be localized at the edges of the spectrum
(a fact which is proved in several cases). But this localization is a consequence of Lifshitz tails, and there are good reasons to
expect that Lifshitz tails fail for the potential \(\beta\), which is not i.i.d.\ but 1-dependent. Indeed, the bottom of the spectrum of \(H_\beta\) is 0, it does not coincide with
the minimum of the support of the distribution of \(2\beta\) translated by the spectrum of \(-P\), as it is the case for i.i.d.\ potential.
In fact, on a finite set, the minimum of the spectrum is reached on the set \(\det(2\beta-P)=0\) which is a set of codimension 1, hence it is "big".

Another natural question concerns the uniform integrability of the martingale \(\psi^{(n)}(i)\).
Let us ask a more precise question: is it true (at least for \(\Z^d\) with constant weights) that
transience of the VRJP implies that the martingale \(\psi^{(n)}(i)\) is bounded in \(L^2\)?
It is quite natural to expect such a property from relation (\ref{eq-mart-crochet}) since
\(\hat G^{(n)}(i,i)\) appears to be the quadratic variation of \(\psi^{(n)}(i)\).
This would have several consequences. Firstly, it would imply that in dimension \(d\ge 3\),
the VRJP satisfies a functional central limit theorem as soon as the VRJP is transient, by the same argument as that of
the proof of Theorem~\ref{thm-clt}.
It would also imply directly that the VRJP is recurrent as soon as the reversible Markov chain in conductances \((W_{i,j})\)
is recurrent, if the group of automorphisms of \((\ggg,W)\) is transitive.
Indeed, assume that the property is true and the VRJP is transient.
By Theorem~\ref{convergence}, the discrete time process \((\tilde Z_n)\) would be represented as a mixture of
reversible Markov chains with conductances
\(W_{i,j} G(0,i)G(0,j)\).
From Proposition~\ref{coro-escape-proba} applied to $i_0=0$, we have that
\[
{\hat G(0,i)\over \hat G(0,0)} \le {\psi(i)\over \psi(0)}.
\]
Hence, \((\tilde Z_n)\) is equivalently a mixture of Markov chains with conductances
\[
\frac{\psi(0)^2}{G(0,0)^2} W_{i,j} {G(0,i)G(0,j)}\le W_{i,j} {\psi(i)\psi(j)}.
\]
But \((\psi(i))\) is stationary ergodic,  if \(\psi_0\) is squared integrable, we would have
\[\E_{\nu_V^W}(W_{i,j} {\psi(i)\psi(j)})\le CW_{i,j}
\]
for some constant \(C>0\).
Usual arguments imply that the Markov chain in conductance
\(W_{i,j}\psi(i)\psi(j)\) is recurrent if the Markov chain in conductances \((W_{i,j})\) is recurrent (c.f. e.g. Exercise 2.75, \cite{lyons-peres}). We arrive at a contradiction.

\subsection{Organization of the paper}
In Section~\ref{finite-graphs}, we gather several results in the case of finite graphs, in particular we recall the main results of \cite{STZ}. In Section~\ref{ss_wired},
we define the important notion of restriction with wired boundary condition and the compatibility property.
Section~\ref{ss_martingale} is the key step in the paper where the martingale property is proved.
In Section~\ref{ss_thm1}, we prove Theorem~\ref{convergence}, Propositions~\ref{coro-escape-proba} and~\ref{0-1law} and Theorem~\ref{main}.
In Section~\ref{ss_h-transform}, we provide extra computations of \(h\)-transforms.
In section~\ref{ss_proof-rec}, we prove recurrence of ERRW in dimension 2 for all initial constant  weights.
In Section~\ref{ss_clt}, we prove functional central limit theorems for the VRJP and the ERRW, Theorems~\ref{thm-clt} and~\ref{FCLT-ERRW}.

\section{The random potential \(\beta\) on finite graphs}\label{finite-graphs}
In this section we assume that \(\mathcal{G}=(V,E)\) is a finite graph and gather several results in this case. Recall that 
every undirected edge \(e=\{i,j\}\) is labeled with a positive conductance \(W_e=W_{i,j}\). In the case of a finite graph, the Schr\"odinger operator $H_\beta$ defined in \eqref{def_Hbeta} can be represented by the $V\times V$-matrix given by
\[H_{\beta}(i,j)=
\begin{cases}
  2\beta_i, & i=j,\\
  -W_{i,j}, & i\neq j, \; i\sim j,\\
  0,&{otherwise,}
\end{cases}
\]
and the set $\ddd_V^W$ defined in \eqref{DVW} is the set of potentials $\beta$ such that $H_\beta$ is positive definite.

\subsection{The probability distribution $\nu_V^W$ on finite graphs and relation to the VRJP}
We recall \cite[Theorem 1]{STZ}, which defines the probability distribution $\nu_V^W(d\beta)$ by its density on any finite graph.
\begin{theoA}[Theorem 1, Definition 1 and Proposition 1 of \cite{STZ}]
\label{thm_potential_fini}
 Let \((\mathcal{G},(W_e)_{e\in E})\) be a  finite graph with conductances. The measure below is a probability on 
 $\ddd_V^W$:
 \begin{equation}
  \label{eq:densitybeta}
  \nu_V^{W}(d\beta):=\mathds{1}_{H_{\beta}>0}\left(\frac{2}{\pi}\right)^{|V|/2}\exp({-\sum_{i\in V}\beta_i+\sum_{e\in E}W_e})\frac{d\beta_{V}}{\sqrt{\det H_{\beta}}}
\end{equation}
with  \(d\beta_{V}=\prod_{i\in V}d\beta_{i}\), and
where $H_\beta>0$ means that $H_\beta$ is positive definite.

The Laplace transform of the probability distribution $\nu_V^W(d\beta)$ is given, for all \((\lambda_i)\in \R_+^V\), by
\begin{equation}
  \label{laplac_trans}
\int e^{-\left<\lambda,\beta\right>}\nu_V^{W}(d\beta)=\exp\left(-\sum_{i\sim j}W_{i,j}(\sqrt{(\lambda_{i}+1)(\lambda_{j}+1)}-1)\right) \prod_{i\in V}\frac{1}{\sqrt{\lambda_{i}+1} }.
\end{equation}
Moreover, we have the following properties: under $\nu_V^W(d\beta)$,
\begin{itemize}
\item
{\it(1-dependence)}: if \(U,U’\subset V\) are such that \(\operatorname{d}_{\mathcal{G}}(U, U’)\geq 2\), then the random variables \(\beta\mapsto \beta_U\) and \(\beta\mapsto\beta_{U’}\) are independent,
\item
{\it(Reciprocal inverse Gaussian marginals)}
for $i\in V$, the random variable \(\beta\mapsto \frac{1}{2\beta_i}\) has an inverse Gaussian distribution with parameter \((\frac{1}{W_{i}},1)\) where \(W_i=\sum_{j\sim i} W_{i,j}\).
\end{itemize}
\end{theoA}
If we apply formula \eqref{laplac_trans} to \((\lambda_i)\in \R_+^V\) such that $\lambda_{V\setminus U}=0$ for a subset $U\subset V$, we find the expression of Proposition~\ref{kolmogorov}. Hence, it implies Proposition~\ref{kolmogorov} in the case of a finite graph.

The field \(\beta\) is closely related to the VRJP, as shown in the next two theorems. 
In~\cite{sabot2011edge}, it is shown that the VRJP in exchangeable time scale defined in Section~\ref{sec_Notations} is a mixture of Markov jump processes,
more precisely:
\begin{theoA}[Theorem 2 of \cite{sabot2011edge}]
\label{thm_vrjp_u_ST}
Assume \(V\) finite. The following measure is a probability distribution on the set \(\{(u_i)_{i\in V}\in \mathbb{R}^V,\; u_{i_0}=0\}\):
\begin{equation}
  \label{density_u}
\mathcal{Q}_{i_{0}}^{W}(du)=\frac{1}{\sqrt{2\pi}^{|V|-1}} \exp\left(-\sum_{i\in V}u_i-\sum_{i\sim j}W_{i,j}(\cosh(u_i-u_j)-1)\right)\sqrt{D(W,u)}du_{V\setminus\{i_{0}\}}
\end{equation}
where \(du_{V\setminus\{i_{0}\}}=\prod_{i\in V\setminus\{i_0\}} du_i\) and
\(
D(W,u)=\sum_{T\in \ttt} \prod_{\{i,j\}\in T} W_{i,j}e^{u_i+u_j}
\), where the sum is over \(\ttt\), the set of spanning trees of the graph \(\ggg\).

For $(u_i)_{i\in V}\in \R^V$, we denote by $P^{(u)}_{i_0}$ the law of the Markov jump process starting at vertex $i_0$ and with jump rates from $i$ to $j$ given by
  \[
  \frac{1}{2}W_{i,j}e^{u_j-u_i}.
  \] 
The law of the VRJP in exchangeable time scale 
starting at \(i_0\) is a mixture of Markov jump processes, with mixing law given by
$$
\P^{VRJP}_{i_0}(\cdot)=\int P^{(u)}_{i_0}(\cdot) \mathcal{Q}_{i_{0}}^{W}(du).
$$
\end{theoA}
\begin{remark}
  By the matrix-tree theorem, \(D(W,u)\) is any diagonal minor of the \(|V|\times |V|\) matrix \((m_{i,j})\) with coefficients
\[m_{i,j}=
\begin{cases}
0,& \text{ if } i\not\sim j,\; i\neq j,\\
 - W_{i,j}e^{u_{i}+u_{j}}, & \text{ if } i\sim j,\; i\neq j,\\
  \sum_{k\in V, k\sim i}W_{i,k}e^{u_{i}+u_{k}}, & \text{ if }i=j.
\end{cases}
\]
\end{remark}
\begin{remark}
The probability measure \(\mathcal{Q}_{i_{0}}^{W}(du)\) appeared previously to \cite{sabot2011edge} in a rather different context in the work of Disertori, Spencer, Zirnbauer, \cite{disertori2010quasi}.
In particular, the fact that \(\mathcal{Q}_{i_{0}}^{W}(du)\) is a probability measure was proved there as a consequence of a Berezin identity applied to
a supersymmetric extension of that measure.
\end{remark}
On finite graphs, the random environment \((u_i)\) of the previous theorem can be represented thanks to the Green function of the random potential \((\beta_{i},i\in V)\) distributed according to $\nu_V^W(d\beta)$.
Let us first recall \cite[Proposition 1]{STZ}.
\begin{propA}[Proposition 1 of \cite{STZ}]
\label{prop_u_beta_STZ}
Assume \(V\) finite.
For $\beta \in \ddd_V^W$, we denote by 
\[G_\beta:=(H_\beta)^{-1}\]
the Green function of the Schr\"odinger operator \(H_\beta\). 
For $\beta\in \ddd_V^W$, $i,j\in V$, we define $u_\beta(i,j)$ by
\begin{equation}
\label{u(i,j)}
e^{u_\beta(i,j)}=\frac{G_\beta(i,j)}{G_\beta(i,i)}.
\end{equation}
For $i_0\in V$, $(u_\beta(i_0,j))_{j\in V}$ is the unique solution of the equation
\begin{align}
\label{eq_psi}
\begin{cases}
u_\beta(i_0,i_0)=0,&
\\
\sum_{j\sim i} W_{i,j}e^{u_\beta(i_0,j)-u_\beta(i_0,i)}= \beta_i, &\hbox{if $i\neq i_0$}.
\end{cases}
\end{align}
In particular,
the function \(\beta\mapsto (u_\beta (i_0,j))_{j\in V}\) is \((\beta_j)_{j\in V\setminus\{i_0\}}\)-measurable.
Moreover, 
for all $\beta\in \ddd_V^W$,
\begin{equation}
\label{beta-and-G}
\beta_{i_0}=\frac{1}{2G_\beta(i_{0},i_{0})}+\frac{1}{2}\sum_{j\sim i_0}W_{i,j}e^{u_\beta(i_0,j)-u_\beta(i_0,i_0)}.
\end{equation}
\end{propA}
As usual, we simply denote by $G(i,j)$ and $u(i,j)$
the associated random variables on the probability space $(\ddd_V^W,\bbb(\ddd_V^W),\nu_V^W)$.
Let us now recall \cite[Theorem 3]{STZ}.
\begin{theoA}[Theorem 3 of \cite{STZ}]
\label{thm_u_beta_STZ}
Assume \(V\) finite.
For all \(i_0\in V\), under the probability $\nu_V^W(d\beta)$,
\begin{enumerate}[label=(\roman*)]
\item
the random field \((u(i_0,j))_{j\in V}\) has the distribution \(\mathcal{Q}_{i_{0}}^{W}\)
of Theorem~\ref{thm_vrjp_u_ST},
\item
\({1\over 2 G(i_0,i_0)}\) has a gamma distribution with parameters \((1/2, 1)\),
\item
\(G(i_0,i_0)\) is independent of \((\beta_j)_{j\neq i_0}\), hence independent of the field \((u(i_0,j))_{j\in V}\).
\end{enumerate}
\end{theoA}
\begin{remark}
  Here we only consider the VRJP with initial local time \(1\), in fact, the above correspondence between \(\beta\) and VRJP still holds for the process starting with any positive local times \((\phi_{i},i\in V)\), in such case, there is a corresponding density \(\nu_V^{W,\phi^2}\), which is defined in~\cite{STZ}, see Definition~1 and Theorem~3. We choose here to normalize the initial local
  time to 1 since it is equivalent to the general case by a change of time and $W$, see \cite{STZ} Appendix~B.
\end{remark}
Combining Theorem~\ref{thm_vrjp_u_ST} and Theorem~\ref{thm_u_beta_STZ}, it gives a representation of the VRJP in exchangeable time scale starting from different points in terms of the probability on random potentials $\nu_V^W$. We state this representation below.
\begin{corollary}\label{rep_VRJP_fini}
Assume $V$ finite. For $\beta\in \ddd_V^W$, define $P^{\beta,i_0}_x$ as the law of the Markov jump process starting from $x$ and with jump rates from $i$ to $j$ given by
$$
\demi W_{i,j}{G_\beta(i_0,j)\over G_\beta(i_0,i)}.
$$
Then, the VRJP in exchangeable time scale is a mixture of these Markov jump processes,
 \begin{eqnarray}\label{rep_VRJP_finite}
\P^{\text{VRJP}}_{i_{0}}(\ \cdot\ )=\int P_{i_0}^{\beta,i_{0}}(\ \cdot \ ) \nu_V^{W}(d\beta).
\end{eqnarray}
\end{corollary}

\subsection{Representation as a sum on paths.}
\label{ss_sum_paths}

We call {\it path} in \(\mathcal{G}\) from \(i\) to \(j\) a finite sequence \(\sigma=(\sigma_0, \ldots, \sigma_m)\)
in \(V\) such that \(\sigma_0=i,\ \sigma_m=j\) and \(\sigma_k\sim \sigma_{k+1}\), for \(k=0, \ldots, m-1\).
The length of \(\sigma\) is defined by \(|\sigma|=m\). We denote by \(\mathcal{P}_{i,j}^{V}\) be the collection of paths in \(V\) from \(i\) to \(j\), and 
\(\bar{\mathcal{P}}_{i,j}^{V}\) be the collection of paths
\(\sigma=(\sigma_0=i, \ldots, \sigma_m=j)\)  in \(V\) from \(i\) to \(j\) such that \(\sigma_k\neq j,k=0,\ldots,m-1\).
For a path $\sigma$ and for $\beta\in \ddd_V^W$, we set
\begin{equation}
\label{eq-defi-W_sigma-beta-sigma}
W_\sigma=\prod_{k=0}^{m-1} W_{\sigma_{k},\sigma_{k+1}}, \;\;\;
(2\beta)_\sigma=\prod_{k=0}^{m} (2\beta_{\sigma_{k}}), \;\;\; (2\beta)^-_\sigma=\prod_{k=0}^{m-1} (2\beta_{\sigma_{k}}).
\end{equation}
For the trivial path \(\sigma=(\sigma_{0})\), we define \(W_{\sigma}=1\), \((2\beta)_{\sigma}=2\beta_{\sigma_{0}}\),
\((2\beta_{\sigma})^{-}=1\). (Note that these definitions make sense also in the case of infinite graphs.)

 The following representation of the Green function \(G(\cdot,\cdot)\) as a sum on paths will be convenient.
\begin{proposition}
\label{fini_green} Assume that $V$ is finite.
For all 
$\beta\in \ddd_V^W$, we have, with the notations of Theorem~\ref{prop_u_beta_STZ},
\begin{equation}
\label{eq-tilG-pathsum}
G_\beta(i,j)=\sum_{\sigma\in \mathcal{P}_{i,j}^V}\frac{W_{\sigma}}{(2\beta)_{\sigma}},
\;\;\;\; 
\;\;\;\;
\exp(u_\beta(i,j))=\sum_{\sigma\in \bar{\mathcal{P}}_{j,i}^{V}}\frac{W_{\sigma}}{(2\beta)_{\sigma}^-}.
\end{equation}
\end{proposition}
\begin{proof}
Write $D_\beta$ for the diagonal $V\times V$ matrix with $(\beta_i)_{i\in V}$ as diagonal coefficients, then $H_\beta=(\operatorname{Id} -PD_\beta^{-1} )D_\beta$. Since $H_\beta>0$, by Perron-Frobenius theorem, we have that $\rho(P D_\beta^{-1})<1$, where $\rho(P D_\beta^{-1})$ is the spectral radius of $P D_\beta^{-1}$. Hence, we can write the following convergent expansion,
$$
G_\beta=H_\beta^{-1}= D_\beta^{-1}\sum_{k=0}^{\infty} (P D_\beta^{-1})^k,
$$
which exactly corresponds to \eqref{eq-tilG-pathsum}.

For
the expansion of $\exp(u_\beta(i,j))$, note first that \(\sum_{\sigma\in \bar{\mathcal{P}}_{j,i}^{V}}\frac{W_{\sigma}}{\beta_{\sigma}^{-}}\leq \beta_{i}G_\beta(j,i)<\infty\).
A path in \(\mathcal{P}_{j,i}^{V}\) can be cut at its first visit to \(i\), turning it into the concatenation of a path in \(\mathcal{\bar{P}}_{j,i}^{V}\) and a path in \(\mathcal{P}_{i,i}^{V}\), and this operation is bijective. It implies that
\begin{equation}
\label{eq-factorize-path-sum}
\left(\sum_{\sigma\in \bar{\mathcal{P}}_{j,i}^{V}}\frac{W_{\sigma}}{(2\beta)_{\sigma}^-}\right)G_\beta(i,i)=\left(\sum_{\sigma\in \bar{\mathcal{P}}_{j,i}^{V}}\frac{W_{\sigma}}{(2\beta)_{\sigma}^-}\right)\left(\sum_{\sigma\in \mathcal{P}_{i,i}^V}\frac{W_{\sigma}}{(2\beta)_{\sigma}}\right)=\sum_{\sigma\in \mathcal{P}_{j,i}^V}\frac{W_{\sigma}}{(2\beta)_{\sigma}}=
G_\beta(i,j),
\end{equation}
hence the result.
\end{proof}
\subsection{A priori estimates on 
$\mathcal{Q}_{i_0}^{W}(du)$.}\label{ss_estimate_disertori}
The following proposition is borrowed from~\cite[Lemma 3]{disertori2010quasi}. For convenience, we give a shorter proof of that estimate based on
spanning trees instead of fermionic variables, following the proof of the corresponding result for the
ERRW, c.f.~\cite[Lemma 7]{disertori2014transience}.
\begin{proposition}
\label{estimate_disertori}
  Let \((\mathcal{G}=(V,E), W)\) be a finite graph with conductances. Fix a vertex \(i_{0}\).
  Let \(\eta>0\) and let $e_1=\{\underline{e_1}, \overline{e_1}\}, \ldots, e_K=\{\underline{e_K}, \overline{e_K}\}$ be $K$ distinct undirected edges such that
  \(W_{e_k}\ge 2\eta\) for all \(k=1, \ldots, K\). Then
\[
\int 
\exp\left(\eta\sum_{k=1}^{K}\cosh\left(u_{\overline{e_k}}-u_{\underline{e_k}}\right)\right)\mathcal{Q}_{i_0}^{W}(du)
\leq e^{\eta K} 2^{K/2}\]
where $\mathcal{Q}_{i_0}^{W}(du)$ is the probability distribution defined in Theorem~\ref{thm_vrjp_u_ST}.
\end{proposition}
\begin{proof}
We remind that $\mathcal{Q}_{i_0}^{W}(du)$ is defined by
\[
\mathcal{Q}_{i_0}^{W}(du)= \frac{1}{\sqrt{2\pi}^{|V|-1}}\exp(-\sum_{i}u_i-\sum_{i\sim j}W_{i,j}(\cosh(u_i-u_j)-1))\sqrt{D(W,u)} du_{V\setminus\{i_{0}\}},\]
with \(du_{V\setminus\{i_{0}\}}=\prod_{i\neq i_0} du_i\) and \(
D(W,u)=\sum_{T\in \ttt} \prod_{\{i,j\}\in T} W_{i,j}e^{u_i+u_j}
\) where the sum is on spanning trees.

Let \(\tilde{W}=W-\eta\sum_{k=1}^{K}\mathds{1}_{e_k}\), (i.e.\ \(\tilde W\) is equal to \(W-\eta\) on the edges $e_1, \ldots, e_K$, and unchanged on
the other edges). By assumption, we have \(\tilde W_{i,j}>0\) on the edges, and for all spanning trees \(T\), since edges appear at most once:
\begin{align*}
\prod_{\{i,j\}\in T}W_{i,j}e^{u_i+u_j}&\leq
  \left(\prod_{k=1}^K \frac{W_{e_k}}{W_{e_k}-\eta}\right)
\prod_{\{i,j\}\in T}\tilde W_{i,j}e^{u_i+u_j}
\le 2^K \prod_{\{i,j\}\in T}\tilde W_{i,j}e^{u_i+u_j},
\end{align*}
which implies
\(
D(W,u)\le 2^K D(\tilde W,u).
\)
From
the expression of \(\mathcal{Q}^{W}_{i_0}(du)\), we deduce that
\[
\exp\left(\eta\sum_{k=1}^{K}\cosh\left(u_{\overline{e_k}}-u_{\underline{e_{k}}}\right)\right)
\mathcal{Q}^{W}_{i_0}(du) \le e^{\eta K} 2^{K/2} \mathcal{Q}^{\tilde W}_{i_0}(du).
\]
It implies that
\begin{align*}
\int  \exp\left(\eta\sum_{k=1}^{K}\cosh\left(u_{\overline{e_k}}-u_{\underline{e_k}}\right)\right)
\mathcal{Q}^{W}_{i_0}(du)
\le e^{\eta K} 2^{K/2} \int \mathcal{Q}^{\tilde W}_{i_0}(du) = e^{\eta K} 2^{K/2}.
\end{align*}

\end{proof}

\section{The wired boundary condition and Kolmogorov extension to infinite graphs}\label{ss_wired}
\subsection{Restriction with wired boundary condition}
Our objective is to extend the relations between the VRJP and the \(\beta\) field to the case of infinite graphs.
To this end, we need an appropriate boundary condition, which turns out to be the wired boundary condition.

\begin{definition}
\label{wired_bd_def}
Let \(\mathcal{G}=(V,E)\) be a connected graph with finite degree at each site, and \(V_{1}\) a strict finite subset of \(V\). We define the restriction of \(\mathcal{G}\) to \(V_{1}\) with wired boundary condition as the graph \(\mathcal{G}_{1}=(\tilde{V}_{1}=V_{1}\cup\{\delta\},E_{1})\) where \(\delta\) is an extra point and
\[E_{1}=\{ \{i,j\}\in E,\ \text{s.t. }i\in V_{1},j\in V_{1},i\sim j\}\cup\{\{i,\delta\}, i\in V_{1} \text{ s.t. } \exists j\notin V_{1},i\sim j\}.\]
If \((W_{i,j})_{\{i,j\}\in E}\) is a set of positive conductances, we define \((W^{(1)}_{i,j})_{\{i,j\}\in E_{1}}\) as the set of restricted conductances by
\[
\begin{cases}
  W_{i,j}^{(1)}=W_{i,j}, & \text{if }i,j\in V_{1}, \;\{i,j\}\in E_1,\\
  W_{i,\delta}^{(1)}=\sum_{j\notin V_{1},j\sim i}W_{i,j}, & \text{if } \{i,\delta\}\in E_1,\\
  0,&\text{otherwise.}
\end{cases}
\]
\end{definition}

\begin{remark}
Intuitively, this restriction corresponds to identifying all points in \(V\setminus V_1\) to a single point \(\delta\) and to delete the edges connecting points of \(V\setminus V_1\).
The new weights are obtained by summing the weights of the edges identified by this procedure.
\end{remark}
The following lemma is fundamental and is the justification for the choice of this notion of restriction.
\begin{lemma}
\label{restriction}
Let \((\mathcal{G}=(V,E), W)\) be a finite graph with conductances and $\nu_V^W$ the associated distribution on random potentials defined in Theorem~\ref{thm_potential_fini}. 
Let \(V_1\) be a strict subset of \(V\) and 
\((\mathcal{G}_1=(\tilde V_1,E_1), W^{(1)})\) be the restriction of $(\ggg,W)$ to $V_1$ with wired boundary condition. Let $\nu_{\tilde V_1}^{W^{(1)}}$ be the distribution of random potential associated with $(\ggg_1, W^{(1)})$. 
We denote by $\left(\nu_V^W\right)_{|V_1}$ and $\left(\nu_{\tilde V_1}^{W^{(1)}}\right)_{|V_1}$ the marginal distributions on $V_1$ of $\nu_{V}^W$ and $\nu_{\tilde V_1}^{W^{(1)}}$.
Then
\[
\left(\nu_V^W\right)_{|V_1}=\left(\nu_{\tilde V_1}^{W^{(1)}}\right)_{|V_1}.
\]
\end{lemma}
\begin{remark} Note that there  is no such compatibility relation with the more usual notion of restriction of graph. The
wired boundary condition is fundamental and in fact will be responsible for the extra gamma random variable that
appears in the representation of the VRJP on the infinite graph.
\end{remark}
\begin{proof}
Taking $(\lambda_i)_{i\in V}\in \R_+^{V}$ such that \(\lambda_{V\setminus V_1}=0\) in Theorem~\ref{thm_potential_fini},
we get that
\begin{align*}
&
\int e^{-\sum_{i\in V_1} \lambda_i \beta_i}\nu_V^W(d\beta)
\\
=&\exp\left(-\sum_{i\sim j,i,j\in V_{1}}W_{i,j}(\sqrt{(1+\lambda_{i})(1+\lambda_{j})}-1)-\sum_{i\sim j,i\in V_{1},j\notin V_{1}}(W_{i,j}(\sqrt{1+\lambda_{i}}-1))\right)\prod_{i\in V_{1}}\frac{1}{\sqrt{1+\lambda_{i}}}.
\end{align*}
Applying Theorem~\ref{thm_potential_fini} to the graph \(\ggg_1\) with $(\lambda_i)_{i\in \tilde V_1}\in \R_+^{\tilde V_1}$ such that \(\lambda_{\delta}=0\), we get
\begin{align*}
&\int  e^{-\sum_{i\in V_1} \lambda_i \beta_i}
\nu_{\tilde V_1}^{W^{(1)}}(d\beta)
\\
=&
\exp\left(-\sum_{i\sim j,i,j\in V_{1}}W^{(1)}_{i,j}(\sqrt{(1+\lambda_{i})(1+\lambda_{j})}-1)-\sum_{i\in V_{1}, i\simGun \delta}(W_{i,\delta}^{(1)}(\sqrt{1+\lambda_{i}}-1))\right)\prod_{i\in V_{1}}\frac{1}{\sqrt{1+\lambda_{i}}}.
\end{align*}
By definition of \(W^{(1)}_{i,j}\), these Laplace transforms are equal, hence the marginal distributions are equal.
\end{proof}

\subsection{Kolmogorov extension: proof of Proposition~\ref{kolmogorov} and Definition~\ref{def-G-and-psi}}
\label{ss_Kolmogorov}
Let \(\mathcal{G}=(V,E)\) be a connected infinite graph with finite degree at each site with conductances
\((W_{i,j})\).
We remind that \((V_n)_{n\ge 1}\) is an increasing sequence of finite strict subsets of \(V\) that exhausts \(V\), i.e.
\(
\cup_{n} V_n =V.
\)

Let \(\mathcal{G}_n=(\tilde{V}_{n}=V_n\cup\{\delta_n\}, E_n)\) be the restriction of \(\mathcal{G}\) to \(V_n\) with wired boundary condition, and \((W^{(n)})\) the restricted conductances. By construction, if \(n<m\), then \((\ggg_n, W^{(n)})\) is the restriction of \((\ggg_m,W^{(m)})\) to $V_n$ with wired boundary condition.
Lemma~\ref{restriction} implies that the sequence of marginal  distributions $\left(\nu_{\tilde V_n}^{W^{(n)}}\right)_{|V_n}$ is a compatible sequence of probabilities. By Kolmogorov extension theorem, it implies that there exists a probability measure \(\nu_V^{W}\) such that
$$
\left(\nu_V^{W}\right)_{|V_n}= \left(\nu_{\tilde V_n}^{W^{(n)}}\right)_{|V_n},
$$
for all integer $n$. By Theorem~\ref{thm_potential_fini}, $\nu_V^W(\d\beta)$ is supported by the set of potentials $\beta$ such that $(H_\beta)_{V_n,V_n}$ is positive definite for all integers $n$, hence by $\ddd_V^W$. It also implies the other properties of $\nu_V^W(d\beta)$.

The solution of the equation defining $\psi^{(n)}_\beta$ in Definition~\ref{def-G-and-psi} exists and is unique since it is equivalent to $(\psi^{(n)}_\beta)_{V_n^c}=1$ and
\begin{align}
\label{existence_psi}
(H_\beta)_{V_n,V_n}(\psi^{(n)}_\beta)_{V_n}(i)=\sum_{j\sim i, j\in V_n^c} W_{i,j}, \;\;\; \hbox{ for }i\in V_n.
\end{align}
Since $(H_\beta)_{V_n,V_n}$ is positive definite for $\beta\in \ddd_V^W$, it defines $\psi^{(n)}_\beta$ uniquely.

\subsection{Coupling lemma. Definition of \(G^{(n)}\), and relations with \(\hat G^{(n)}\), \(\psi^{(n)}\) and \(\gamma\).}
\label{ss_G-et-tilde}

Consider the probability $\nu_V^W(d\beta,d\gamma)$ defined in \eqref{nu_beta_gamma}. It will be convenient to couple the measure $\nu_V^W(d\beta,d\gamma)$ and the measure $\nu_{\tilde V_n}^{W^{(n)}}(d\beta)$ in the following way. 
\begin{lemma}
\label{coupling}
For $\beta\in \ddd_V^W$ and $\gamma>0$, we define
\(\beta^{(n)}\in \R^{\widetilde V_n}\) by
\begin{equation}
\label{betad}
\beta^{(n)}_{V_n}=\beta_{V_n},\
\beta^{(n)}_{\delta_n}= \sum_{j\in V_n, j \sim \delta_n}\frac{1}{2} W^{(n)}_{j,\delta_n} \psi_\beta^{(n)}(j)+ \gamma.
\end{equation}
Then, $\beta^{(n)}\in\ddd_{\widetilde V_n}^{W^{(n)}}$ and under $\nu_V^W(d\beta,d\gamma)$, $\beta^{(n)}$ is distributed according to $\nu_{\widetilde V_n}^{W^{(n)}}$.

Let $H^{(n)}_{\beta^{(n)}}$ be the Schr\"odinger operator associated with $\ggg_n$, $W^{(n)}$ and potential $\beta^{(n)}$. Let
$
G_{\beta^{(n)}}^{(n)}=(H^{(n)}_{\beta^{(n)}})^{-1},
$
be its Green function. Then,
$$
G_{\beta^{(n)}}^{(n)}(\delta_n,\delta_n)={1\over 2\gamma},
$$
and, for all $i\in V_n$,
$$
\psi^{(n)}_\beta(i)={G_{\beta^{(n)}}^{(n)}(\delta_n,i)\over G_{\beta^{(n)}}^{(n)}(\delta_n,\delta_n)}=e^{u^{(n)}_{\beta^{(n)}}(\delta_n,i)},
$$
where \(u^{(n)}_{\beta^{(n)}}\) is the field defined in Proposition~\ref{prop_u_beta_STZ} for the graph $\ggg_n$ and with the potential $\beta^{(n)}$.

As usual, we often omit the subscript $\beta$ and write $H^{(n)}$, $G^{(n)}$, $u^{(n)}$, and consider them as random variables on $\ddd_V^W\times \R_+^*$ under $\nu_V^W(d\beta,d\gamma)$.
\end{lemma}
\begin{proof}
Let $\beta\in \ddd_V^W$ and $\gamma>0$. Denote in this proof  by $(u(j))_{j\in \tilde V_n}$ the vector defined by
$$
u(j)=
\begin{cases}0,&\hbox{ if $j=\delta_n$},
\\
\log \psi_\beta^{(n)}(j),   &\hbox{ if $j\in V_n$}.
\end{cases}
$$
Then, by definition of $\psi^{(n)}_\beta$ and $\beta^{(n)}$, we have $(H_{\beta^{(n)}}^{(n)}(e^u))_{V_n}=(H_{\beta}(\psi^{(n)}_\beta))_{V_n}=0$ and 
$$
H_{\beta^{(n)}}^{(n)}(e^u)(\delta_n)=2\beta^{(n)}_{\delta_n}- \sum_{j\in V_n, j \sim \delta_n}W^{(n)}_{\delta_n,j} \psi_\beta^{(n)}(j)=2\gamma.
$$
Since $(e^{u(j)})$ is a vector with positive coefficients, by general results on symmetric M-matrices, see Theorem~2.7 page 141 of \cite{berman1994nonnegative}, it implies that $H^{(n)}_{\beta^{(n)}}>0$. Moreover, it implies that ${1\over 2\gamma}e^{u(\cdot)}=G_{\beta^{(n)}}^{(n)}(\delta_n,\cdot)$, hence that $G_{\beta^{(n)}}^{(n)}(\delta_n,\delta_n)={1\over 2\gamma}$ and $e^{u(\cdot)}={G_{\beta^{(n)}}^{(n)}(\delta_n,\cdot)\over G_{\beta^{(n)}}^{(n)}(\delta_n,\delta_n)}=e^{u_{\beta^{(n)}}^{(n)}(\delta_n,\cdot)}$.

Finally, by Theorem~\ref{thm_u_beta_STZ}, the law of $(\beta^{(n)}_{V_n}, G^{(n)}_{\beta^{(n)}}(\delta_n,\delta_n))$ is the same under $\nu_V^W(d\beta,d\gamma)$ and $\nu_{\tilde V_n}^{W^{(n)}}(d\beta^{(n)})$, and since $\beta^{(n)}\mapsto (\beta^{(n)}_{V_n}, G^{(n)}_{\beta^{(n)}}(\delta_n,\delta_n))$ is a bijection by Proposition~\ref{prop_u_beta_STZ}, it implies that under $\nu_V^W(d\beta,d\gamma)$, $\beta^{(n)}$ has law $\nu_{\tilde V_n}^{W^{(n)}}$.
\end{proof}

\begin{proposition}\label{relate_psi_G}
With the definition of Proposition~\ref{coupling}, for all $i,j\in V_n$, all $\beta\in \ddd_V^W$, all $\gamma >0$,
 \[{G^{(n)}_{\beta^{(n)}}(i,j)}=
\hat G_\beta^{(n)}(i,j)+{1\over 2\gamma}\psi_\beta^{(n)}(i)\psi_\beta^{(n)}(j).\]
\end{proposition}
\begin{proof}
For simplicity, we omit the subscripts $\beta$, $\beta^{(n)}$ in $\hat G^{(n)}_{\beta}$, $\psi^{(n)}_\beta$, $G_{\beta^{(n)}}^{(n)}$ in the expression below.
By Proposition~\ref{fini_green}, Lemma~\ref{coupling}, using $(\beta^{(n)})_{V_n}=\beta_{V_n}$, we find that
\begin{align}\label{path_G}
  G^{(n)}(i,j)=\sum_{\sigma\in \mathcal{P}_{i,j}^{\widetilde{V}_{n}}}\frac{W^{(n)}_{\sigma}}{(2\beta^{(n)})_{\sigma}}
 ,\;\;\; 
 \hat G^{(n)}(i,j)= \sum_{\sigma\in \mathcal{P}_{i,j}^{{V}_{n}}}\frac{W_{\sigma}}{(2\beta)_{\sigma}}
\end{align}
and
\[
 \psi^{(n)}(i)=\frac{G^{(n)}(\delta_n,i)}{G^{(n)}(\delta_n,\delta_n)}= \sum_{\sigma\in \overline{\mathcal{P}}_{i,\delta_n}^{\widetilde{V}_{n}}}\frac{W^{(n)}_{\sigma}}{(2\beta)^-_{\sigma}}.
\]
Therefore, if we denote \(\mathcal{P}_{i,\delta_n,j}^{\widetilde{V}_{n}}\) the collection of paths on \(\tilde{V}_{n}\) starting from \(i\), visiting \(\delta_n\) at least once, and ending
at \(j\), that is,
\[\mathcal{P}_{i,\delta_n,j}^{\widetilde{V}_{n}}=\{\sigma=(\sigma_{0},\cdots,\sigma_{m})\in \mathcal{P}_{i,j}^{\widetilde{V}_{n}},
\hbox{ such that } \exists 0\leq k\leq m,\sigma_{k}=\delta_n\}
\]
then, since $(W^{(n)})_{V_n,V_n}=W_{V_n,V_n}$ and $(\beta^{(n)})_{V_n}=\beta_{V_n}$,
\begin{align*}
  G^{(n)}(i,j)-\hat{G}^{(n)}(i,j)&=\sum_{\sigma\in \mathcal{P}_{i,\delta_n,j}^{\widetilde{V}_{n}}}\frac{W^{(n)}_{\sigma}}{(2\beta^{(n)})_{\sigma}}\\
&=(\sum_{\sigma\in \bar{\mathcal{P}}_{i,\delta_n}^{\widetilde V_{n}}}\frac{W^{(n)}_{\sigma}}{(2\beta^{(n)})^{-}_{\sigma}})\cdot(\sum_{\sigma\in \mathcal{P}_{\delta_n,j}^{\widetilde{V}_{n}}}\frac{W^{(n)}_{\sigma}}{(2\beta^{(n)})_{\sigma}})\\
&=\psi^{(n)}(i)G^{(n)}(\delta_n,j)=\psi^{(n)}(i)\psi^{(n)}(j)G^{(n)}(\delta_n,\delta_n)= \psi^{(n)}(i)\psi^{(n)}(j){1\over 2\gamma}
\end{align*}
where we used Lemma~\ref{coupling} in the last equality.
\end{proof}

\section{The martingale property}\label{ss_martingale}

We remind that  \(\fff^{(n)}=\sigma(\beta_i, i\in V_n)\) is the sub $\sigma$-field generated by the random variables \(\beta\mapsto \beta_i\), \(i\in V_n\).
The following proposition is the key property for the main theorem.
\begin{proposition}
\label{prop-martingale-psi-n-crochet}
With the notations of Definition~\ref{def-G-and-psi}, for all \(n\in \N \), \(\psi^{(n)}\) has finite moments.
Moreover, we have, 
$\nu_V^W$-a.s.,
\begin{equation}
\label{eq-mart-prop-psi}
\E_{\nu_V^W}\left(\psi^{(n+1)}(i)| \fff^{(n)}\right)= \psi^{(n)}(i), \;\;\; \forall i\in V,
\end{equation}
and for all \(i,j\in V\),
\begin{equation}
\label{eq-mart-crochet}
\E_{\nu_V^W}\left(\psi^{(n+1)}(i)\psi^{(n+1)}(j)-\psi^{(n)}(i)\psi^{(n)}(j) | \fff^{(n)}\right)= \E_{\nu_V^W}\left(\hat G^{(n+1)}(i,j)-\hat G^{(n)}(i,j)| \fff^{(n)}\right).
\end{equation}
\end{proposition}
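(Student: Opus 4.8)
The plan is to prove the two martingale identities by conditioning on $\mathcal{F}_n = \sigma(\beta_i, i \in V_n)$ and using the recursive structure that relates the restriction with wired boundary condition on $V_{n+1}$ to that on $V_n$. The key idea is that the graph $\mathcal{G}_n$ (restriction of $\mathcal{G}$ to $V_n$) is itself the wired restriction of $\mathcal{G}_{n+1}$, so that the point $\delta_n$ corresponds to collapsing $(V_{n+1}\setminus V_n)\cup\{\delta_{n+1}\}$ to a single vertex. By Proposition~\ref{relate_psi_G} and the identity~\eqref{psin}, the objects $\psi^{(n)}(i) = e^{u^{(n)}(\delta_n, i)}$ and $\hat G^{(n)}(i,j)$ are expressed through the finite Green function $G^{(n)}$, and the martingale structure should emerge from integrating out the ``new'' variables $(\beta_j)_{j\in V_{n+1}\setminus V_n}$ at the level of the finite field $\nu^{W^{(n+1)}}_{\tilde V_{n+1}}$.

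First I would establish finiteness of moments: since $\psi^{(n)}(i) = e^{u^{(n)}(\delta_n, i)} = G^{(n)}(\delta_n,i)/G^{(n)}(\delta_n,\delta_n)$ is $\mathcal{F}_n$-measurable (it depends only on $\beta_{|V_n}$), and the a priori estimates of Proposition~\ref{estimate_disertori} give exponential moment control on $\cosh(u^{(n)}(\delta_n,i)-u^{(n)}(\delta_n,i'))$ along paths, all polynomial moments of $\psi^{(n)}(i)$ are finite. For the martingale property~\eqref{eq-mart-prop-psi}, the cleanest route is a probabilistic interpretation: by Theorem~\ref{thm_u_beta_STZ}, $\psi^{(n)}(i)$ is the mixing field of a VRJP on $\mathcal{G}_n$ started from $\delta_n$, evaluated at $i$. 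One should exploit the Markov/restriction compatibility (Lemma~\ref{restriction}) to write $\psi^{(n+1)}(i)$ as a sum over paths in $\tilde V_{n+1}$ and show that conditioning on $\mathcal{F}_n$ and integrating over the fresh coordinates recovers exactly the path sum defining $\psi^{(n)}(i)$. Concretely, a path from $\delta_{n+1}$ to $i$ can be decomposed according to its last visit to the boundary region $V_{n+1}\setminus V_n$, and the conditional expectation of the boundary contribution collapses (via the identity~\eqref{beta-and-G} relating $\beta$ to $G$ on the inner graph) to the effective weight $W^{(n)}_{i,\delta_n}$ of the wired edge.

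For the quadratic-variation identity~\eqref{eq-mart-crochet}, I would work with the product $\psi^{(n+1)}(i)\psi^{(n+1)}(j)$ and use Proposition~\ref{relate_psi_G} applied at level $n+1$ with the inner graph $\mathcal{G}_n$ playing the role of the restricted graph. That proposition gives the algebraic relation
\[
G^{(n+1)}(i,j) = \hat G^{(n+1)}(i,j) + \psi^{(n+1)}(i)\psi^{(n+1)}(j)\,G^{(n+1)}(\delta_{n+1},\delta_{n+1}),
\]
and an analogous relation holds when one restricts $\mathcal{G}_{n+1}$ to $V_n$ with wired boundary, yielding $\psi^{(n)}$ and $\hat G^{(n)}$. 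Subtracting the two decompositions and taking $\E(\,\cdot\mid\mathcal{F}_n)$, the martingale property~\eqref{eq-mart-prop-psi} (already proved) makes the cross terms vanish in conditional expectation, and what survives is precisely the difference $\hat G^{(n+1)} - \hat G^{(n)}$ on the right-hand side. The factor $G^{(n+1)}(\delta_{n+1},\delta_{n+1}) = 1/(2\gamma_{n+1})$ from~\eqref{G-gamma} must be handled so that its contribution aligns correctly; the point is that the increment of the product $\psi(i)\psi(j)$ measures exactly the Green-function mass of paths in $\mathcal{G}_{n+1}$ that exit $V_n$ and return, which is $\hat G^{(n+1)}(i,j)-\hat G^{(n)}(i,j)$.

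\textbf{The main obstacle} I anticipate is the bookkeeping in the conditional expectation over the annulus $V_{n+1}\setminus V_n$: one must verify that integrating out the fresh $\beta$-coordinates, given $\mathcal{F}_n$, reproduces the effective wired conductances $W^{(n)}$ exactly, and that the independent Gamma variable emerging at the new boundary $\delta_{n+1}$ does not contaminate the $\mathcal{F}_n$-conditional identity. This requires carefully tracking which path sums are $\mathcal{F}_n$-measurable and using the crucial fact (noted after~\eqref{psin}) that $u^{(n)}(\delta_n,\cdot)$ depends only on $\beta_{|V_n}$, not on the boundary value $\beta_{\delta_n}$. Getting the martingale increment identity~\eqref{eq-mart-crochet} to close cleanly hinges on recognizing $\hat G^{(n)}(i,j)$ as the genuine conditional quadratic variation, which is the conceptual heart of the whole construction.
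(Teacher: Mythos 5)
Your reduction of \eqref{eq-mart-prop-psi} stops exactly where the real difficulty begins. Decomposing a path in $\bar{\ppp}^{(n+1)}_i$ at its first exit from $V_n$ does give, with $\fff_n$-measurable prefactors,
\[
\psi^{(n+1)}(i)=\sum_{\sigma\in\bar{\ppp}^{(n)}_i}\frac{W_\sigma}{(2\beta)^-_\sigma}\,\psi^{(n+1)}(\sigma_{|\sigma|}),
\]
so everything hinges on the single claim $\E\bigl(\psi^{(n+1)}(j)\,\big|\,\fff_n\bigr)=1$ for $j\notin V_n$. But conditionally on $\fff_n$, the variables $(\beta_j)_{j\in V_{n+1}\setminus V_n}$ are \emph{not} a fresh wired field: Lemma~\ref{restriction} only gives compatibility of \emph{marginals}, while the actual conditional law is the Schur-complement-tilted measure $\nu_{U^c}^{\check W,\check\eta}$ of Lemma~\ref{lem_marg_cond}, with modified conductances $\check W$ (carrying diagonal entries) and a tilt $\check\eta$, both depending on $\beta_{|V_n}$. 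The identity \eqref{beta-and-G} you invoke is a deterministic algebraic relation on a finite graph; it does not perform this conditional integration, and no step in your sketch does. That integration is precisely the technical heart of the proposition (the paper itself notes the original proof was ``rather technical''); the paper's route is to introduce the extended family $\nu_V^{W,\eta}$ (Lemma~\ref{lem_restriction}), identify its marginal and conditional laws by Schur-complement algebra (Lemma~\ref{lem_marg_cond}), prove the exponential identity of Lemma~\ref{lem-exp-martingale} (equivalently Lemma~\ref{exp-martingale}), and then obtain \eqref{eq-mart-prop-psi} and \eqref{eq-mart-crochet} by differentiating in $\lambda$. Your moment bound via Proposition~\ref{estimate_disertori} is fine.

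Your argument for \eqref{eq-mart-crochet} is moreover circular and, as stated, false. Subtracting the two decompositions of Proposition~\ref{relate_psi_G} and taking $\E(\,\cdot\,|\fff_n)$ requires knowing $\E\bigl(G^{(n+1)}(i,j)\,\big|\,\fff_n\vee\sigma(\gamma)\bigr)$, which nothing you have established supplies; and \eqref{eq-mart-prop-psi} is a first-moment statement that gives no control over the conditional \emph{product} $\E\bigl(\psi^{(n+1)}(i)\psi^{(n+1)}(j)\,\big|\,\fff_n\bigr)$ --- identifying that quantity is exactly the content of \eqref{eq-mart-crochet}, so invoking cancellation of ``cross terms'' assumes what is to be proved. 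In fact $G^{(n)}(i,j)$ is a strict submartingale, not a martingale: combining the (true) identity \eqref{eq-mart-crochet} with Proposition~\ref{relate_psi_G} and \eqref{G-gamma}, and using the independence of $\gamma$ from $\beta$, one gets
\[
\E\!\left(G^{(n+1)}(i,j)-G^{(n)}(i,j)\,\big|\,\fff_n\vee\sigma(\gamma)\right)
=\Bigl(1+\frac{1}{2\gamma}\Bigr)\,\E\!\left(\hat G^{(n+1)}(i,j)-\hat G^{(n)}(i,j)\,\big|\,\fff_n\right)\ge 0,
\]
which is generically strictly positive since $\hat G^{(n)}(i,j)$ increases in $n$. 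So no such cancellation occurs; the object that is actually a martingale at second order is $\psi^{(n)}(i)\psi^{(n)}(j)-\hat G^{(n)}(i,j)$, and extracting it requires the conditional law of the annulus variables (Lemma~\ref{lem_marg_cond}) or the supersymmetric identity of Lemma~\ref{exp-martingale}, neither of which is recoverable from Proposition~\ref{relate_psi_G} together with \eqref{eq-mart-prop-psi} alone.
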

\begin{remark}
  In Theorem~\ref{thm_vrjp_u_ST}, by the change of variables \(\tilde{u}(\cdot)=u(\cdot)-\frac{\sum_{i\in V}u(i)}{|V|}\), the new variables \((\tilde{u}(i))_{i\in V}\) are in the space \(\{\sum_{i\in V}\tilde{u}(i)=0\}\) and the density becomes
  \[\mathcal{\tilde{Q}}_{i_{0}}^{W}(d \tilde{u})=\frac{1}{\sqrt{2}^{|V|-1}}e^{\tilde{u}(i_{0})}e^{-\sum_{i\sim j}W_{i,j}(\cosh(\tilde{u}(i)-\tilde{u}(j))-1)}\sqrt{D(W,\tilde{u})} d \tilde{u}_{V\setminus\{i_{0}\}}.\]
We see from this expression that \(e^{\tilde u(i)-\tilde u(i_0)}\cdot\mathcal{\tilde{Q}}_{i_{0}}^{W}=\mathcal{\tilde{Q}}_{i}^{W}\), hence that
\(
\int e^{\tilde u(i)-\tilde u(i_0)}\mathcal{\tilde{Q}}_{i_{0}}^{W}(d \tilde{u})=1.
\)
Applied to \(V=\tilde{V}_{n}\), \(i_{0}=\delta_{n}\), we get \(\E_{\nu_V^W}(\psi^{(n)}(i))=1\)
which is a particular case of~\eqref{eq-mart-prop-psi}.
\end{remark}
The original proof of that property was rather technical (see the second arXiv version of the present paper). Some time after the first version of this paper was posted on arXiv, 
a simpler proof of the martingale property \eqref{eq-mart-prop-psi} was given in \cite{DMR15}. Moreover, using some
supersymmetric arguments, the following more general property was proved.
\begin{lemma}[\cite{DMR15}]\label{exp-martingale}
 Let $\lambda\in (\R_+)^V$ be a non-negative function on $V$ with bounded support, then
$$
\E\left(e^{-\left<\lambda,\psi^{(n+1)}\right>-\demi\left<\lambda, \hat G^{(n+1)}\lambda\right>} | \fff^{(n)}\right)=e^{-\left<\lambda,\psi^{(n)}\right>-\demi\left<\lambda, \hat G^{(n)}\lambda\right>}.
$$
\end{lemma}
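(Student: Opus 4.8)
The plan is to add the vertices of $V_{n+1}\setminus V_n$ one at a time and reduce the whole statement to a single explicit integral, the point being that the conditional law of the newly added $\beta$-variable is an inverse-Gaussian distribution, for which the relevant Laplace-type integral closes up exactly. Since $\hat G$ and $\psi$ do not depend on the exhausting sequence (Theorem~\ref{convergence}(ii)), for an arbitrary finite $U\subset V$ I set $\hat G^{(U)}=((H_\beta)_U)^{-1}$, which is well defined and positive (a principal submatrix of a positive definite $(H_\beta)_{V_m}$), and $\psi^{(U)}=\hat G^{(U)}b^U$ with the boundary-flux vector $b^U_i=\sum_{k\sim i,\,k\notin U}W_{i,k}$. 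These agree with $\hat G^{(n)},\psi^{(n)}$ when $U=V_n$: the relation $\psi^{(n)}=\hat G^{(n)}b^{(n)}$ is simply the finite-volume form of $H_\beta\psi=0$, obtained by reading \eqref{beta-and-G} with root $\delta_n$ as $2\beta_i\psi^{(n)}(i)=\sum_{j\sim i}W_{i,j}\psi^{(n)}(j)$ for $i\in V_n$. By the tower property it then suffices to prove the identity for $U'=U\cup\{v\}$ with a single added vertex $v$, and to compose along a chain $V_n=U_0\subset\cdots\subset U_k=V_{n+1}$.

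For the one-vertex step I use two ingredients. First, the Schur complement: block-decomposing $(H_\beta)_{U'}$ along $U$ and $\{v\}$, with $\tilde b_i=W_{i,v}\mathds{1}_{i\sim v}$, $\phi=\hat G^{(U)}\tilde b$ and Schur complement $s=2\beta_v-\langle\tilde b,\hat G^{(U)}\tilde b\rangle$, one gets for every vector $\mu=(\mu_U,\mu_v)$
\[
\tfrac12\langle\mu,\hat G^{(U')}\mu\rangle=\tfrac12\langle\mu_U,\hat G^{(U)}\mu_U\rangle+\tfrac{1}{2s}\bigl(\langle\mu_U,\phi\rangle+\mu_v\bigr)^2 .
\]
Writing $\langle\lambda,\psi^{(U')}\rangle+\tfrac12\langle\lambda,\hat G^{(U')}\lambda\rangle$ as $\tfrac12\langle b^{U'}+\lambda,\hat G^{(U')}(b^{U'}+\lambda)\rangle-\tfrac12\langle b^{U'},\hat G^{(U')}b^{U'}\rangle$ and inserting this formula, everything except $s$ (equivalently $\beta_v$) becomes $\fff_U$-measurable. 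Second, the conditional law of $\beta_v$: integrating the finite wired-graph density of Theorem~\ref{thm_u_beta_STZ}/Proposition~\ref{kolmogorov} over the boundary spin gives the marginal density of $\beta_{|U}$ in a boundary-corrected form carrying the extra factor $e^{-\frac12\langle b^U,\hat G^{(U)}b^U\rangle}$; taking the ratio of the densities on $U'$ and $U$ and using $\psi^{(U)}=\hat G^{(U)}b^U$ collapses, after cancellation, to the inverse-Gaussian law
\[
s\mid\fff_U\ \sim\ \frac{1}{\sqrt{2\pi s}}\,e^{-\frac s2-\frac{P^2}{2s}+P}\,\mathds{1}_{s>0}\,ds,\qquad P=\langle b^{U'}_{|U},\phi\rangle+b^{U'}_v\ge0 .
\]

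Now I substitute the Schur formula into the left-hand side and integrate over $s$. With $q=\langle\lambda_U,\phi\rangle+\lambda_v\ge0$ (here the hypothesis $\lambda\ge0$ is used), the $\fff_U$-measurable part factors out and the $s$-integral has exponent $-\tfrac s2-\tfrac{(P+q)^2}{2s}+P$. The classical identity $\int_0^\infty s^{-1/2}e^{-as-b/s}\,ds=\sqrt{\pi/a}\,e^{-2\sqrt{ab}}$ yields $2\sqrt{ab}=P+q$, a perfect square whose root is $P+q\ge0$, so the absolute value is harmless; the factor $e^{P}$ cancels $e^{-P}$ and leaves $e^{-q}$. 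Recombining with the measurable factor and using $\hat G^{(U)}b^U=\psi^{(U)}$ once more turns $-q$ together with the remaining quadratic terms exactly into $-\langle\lambda,\psi^{(U)}\rangle-\tfrac12\langle\lambda,\hat G^{(U)}\lambda\rangle$, which is the asserted value at level $U$.

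The delicate point, and the reason a naive computation fails, is the sign of the quadratic term: the weight $e^{-\frac12\langle\lambda,\hat G^{(U)}\lambda\rangle}$ is not the Laplace transform of any honest Gaussian, so the identity can close only through the inverse-Gaussian integral, whose $e^{-2\sqrt{ab}}$ both flips the sign and manufactures the missing linear term $\langle\lambda,\psi\rangle$. Making this work requires the two algebraic facts above—the harmonic representation $\psi^{(U)}=\hat G^{(U)}b^U$ and the boundary-corrected conditional density of $\beta_v$—to line up precisely, together with $\lambda\ge0$ to discard $|P+q|$. This is exactly the cancellation that the supersymmetric argument of \cite{DMR15} organizes automatically: the fermionic integration places a determinant in the numerator, producing the $-\frac12$ sign with no explicit conditional law. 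As a consistency check, differentiating the resulting identity once and twice at $\lambda=0$ recovers \eqref{eq-mart-prop-psi} and \eqref{eq-mart-crochet} of Proposition~\ref{prop-martingale-psi-n-crochet}.
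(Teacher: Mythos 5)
Your proof is correct, and it implements the key mechanism differently from the paper. The paper proves the lemma in a single block step: it introduces Letac's extended family $\nu_V^{W,\eta}$ (Lemma~\ref{lem_restriction}), shows that marginals and conditionals of this family stay in the family with Schur-complement parameters $\check W,\check\eta$ (Lemma~\ref{lem_marg_cond}), and then disposes of the weight $e^{-\langle\lambda,\psi\rangle-\demi\langle\lambda,G_\beta\lambda\rangle}$ by absorbing $\lambda$ into the parameter, $\check\eta\mapsto\check\eta+\check\lambda$ with $\check\lambda=\lambda_{U^c}+P_{U^c,U}\hat G^U\lambda_U$, so that the conditional expectation collapses to $\E_{\nu^{\check W,\check\lambda+\check\eta}}(1)=1$ by the normalization of the family (Lemma~\ref{lem-exp-martingale}); the lemma itself then follows by taking $U=V_n$, $V=V_{n+1}$ and using $\psi^{(n)}_{V_n}=\hat G^{(n)}\eta^{(n)}$ with $\eta^{(n)}=P_{V_n,V_n^c}(1_{V_n^c})$ --- exactly your harmonic representation $\psi^{(U)}=\hat G^{(U)}b^U$. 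You replace the block step by a vertex-by-vertex induction in which the only analytic input is the scalar integral $\int_0^\infty s^{-1/2}e^{-as-b/s}\,ds=\sqrt{\pi/a}\,e^{-2\sqrt{ab}}$. I checked the two ingredients: the ratio of the boundary-corrected marginal densities on $U'$ and $U$ does yield the conditional density $\frac{1}{\sqrt{2\pi s}}e^{-s/2-P^2/(2s)+P}\mathds{1}_{s>0}$ for the Schur complement $s$ (the factors $e^{-\langle\tilde b,\phi\rangle/2}$ and $e^{\langle\tilde b,1_U\rangle}$ cancel as claimed), and the recombination $\langle\lambda_U,\hat G^{(U)}b'_U\rangle+\langle\lambda_U,\phi\rangle=\langle\lambda_U,\psi^{(U)}\rangle$ via $b^U_{|U}=b'_{|U}+\tilde b$ closes the step, with the output $e^{-\lambda_v}$ correctly accounting for $\psi^{(U)}(v)=1$. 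In fact the two proofs are the same mechanism at different granularity: your boundary-corrected marginal of $\beta_U$, obtained by wiring $U$ and integrating out the single boundary spin, is precisely the density of $\nu_U^{W_{U,U},b^U}$ of Lemmas~\ref{lem_restriction}--\ref{lem_marg_cond}, and your one-dimensional integral is the scalar instance of Letac's normalization; both proofs also consume the hypothesis $\lambda\ge 0$ at the corresponding point (you, to resolve $|P+q|=P+q$; the paper, to ensure $\check\lambda+\check\eta\ge 0$ so that $\nu^{\check W,\check\lambda+\check\eta}$ is a probability). What your route buys is elementarity and transparency: no multivariate family is needed as a black box, and one sees exactly where the ``wrong-sign'' quadratic weight is converted into the linear term $\langle\lambda,\psi\rangle$, namely in the $e^{-2\sqrt{ab}}$ of the inverse-Gaussian Laplace transform. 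What the paper's route buys is a single computation with no induction and no bookkeeping for the (possibly disconnected) intermediate sets, plus the family $\nu^{W,\eta}$ and its stability under conditioning, which has independent interest beyond this lemma.
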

We provide here a different proof of this assertion based on elementary computations on the measures $\nu_V^W$ on finite sets. It also provides a simpler proof of the
original assertion Proposition~\ref{prop-martingale-psi-n-crochet} by differentiating in $\lambda$.
\subsection{Marginal and conditional laws of \(\nu_V^{W}\)} 

In this subsection, we suppose that \(\mathcal{G}=(V,E)\) is finite. We state some identities on marginal and conditional laws of
the distribution \(\nu_V^{W}\), which will be instrumental in the proof of the martingale property in the next subsection.

Let us first remark that the law $\nu_V^{W}$ defined in Theorem~\ref{thm_potential_fini} can be extended to the case where 
$P=(W_{i,j})_{i,j\in V}$ has non-zero, diagonal coefficients. Indeed, if some diagonal coefficients of $P$ are positive, then changing
from variables $(\beta_i)$ to variables $(\beta_i-\demi W_{i,i})$, we get the law $\nu_V^{\widetilde W}$ where $(\widetilde W_{i,j})$ is obtained from
$(W_{i,j})$ by replacing all diagonal entries by 0. While it is not very natural from the point of view of the VRJP to allow
non-zero diagonal coefficients, it is convenient in this section to allow this possibility since it simplifies the statements about conditional law.

Recall that for any function $\zeta:V\mapsto \R$ and any subset $U\subset V$, we write $\zeta_U$ for the restriction of $\zeta$ to the subset $U$. Similarly, if $A$ is a $V\times V$ matrix and $U\subset V$, $U'\subset V$, we write $A_{U,U'}$ for its restriction to the block $U\times U'$.
 We also write 
\(d\beta_{U}=\prod_{i\in U}d\beta_{i}\) to denote integration on variables $\beta_U$. 

In the next lemma we give an extension of the family of probability distributions \(\nu_V^{W}\). This extension was proposed by Letac, in the unpublished note \cite{Letac} discussing the integral defined in \cite{STZ}. We give a proof of this lemma using Theorem~\ref{thm_potential_fini}.

\begin{lemma}[Letac, \cite{Letac}]
  \label{lem_restriction}
  Let \(V\) be finite and \(P=(W_{i,j})_{i,j\in V}\) be a symmetric matrix with non-negative coefficients. 
 Let  \((\eta_i)_{i\in V}\in \R_+^V\) be a vector with non-negative coefficients.
Then the following measure on \(\ddd_V^W\)
 \begin{align}  \label{nu_eta}
   \nu_V^{W,\eta}(d\beta)
   &:=e^{-\demi\left<\eta, (H_\beta)^{-1} \eta\right>}e^{\left<\eta,1\right>}  \nu_V^{W}(d\beta)
\\
\nonumber
&=\mathds{1}_{H_{\beta}>0}\left(\frac{2}{\pi}\right)^{|V|/2}e^{-\demi\left<1, H_\beta 1\right>-\demi\left<\eta, (H_\beta)^{-1} \eta\right> }\frac{1}{\sqrt{\det H_{\beta}}} e^{\left<\eta,1\right>} d\beta_V
\end{align}
is a probability distribution, where \(1\) in the scalar products \(\left< 1,H_{\beta}1 \right>\) and \(\left< \eta, 1 \right>\)  is to be understood as the vector \(
\begin{pmatrix}
  1\\ \vdots \\1
\end{pmatrix}
\).
Its Laplace transform is, for any \(\lambda\in \mathbb{R}_{+}^{V}\)
\begin{equation}
  \label{eq:laplace-nubetaweta}
\int e^{-\left< \lambda,\beta \right>} \nu^{W,\eta}_V(d\beta)=e^{-\left< \eta,\sqrt{\lambda+1}-1 \right>-\sum_{i\sim j}W_{i,j}\left( \sqrt{(1+\lambda_{i})(1+\lambda_{j})}-1 \right)}\prod_{i\in V}\frac{1}{\sqrt{1+\lambda_{i}}}
\end{equation}
where \(\sqrt{\lambda+1}-1\) should be considered as the vector \((\sqrt{\lambda_i+1}-1)_{i\in V}\).
\end{lemma}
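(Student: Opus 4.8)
The plan is to realize the Letac measure $\nu_V^{W,\eta}$ as the \emph{marginal} on $\beta_V$ of an ordinary measure of the type in Theorem~\ref{thm_potential_fini}, obtained by adjoining a single extra vertex. Concretely, I would first use the shift $\beta_i\mapsto \beta_i-\tfrac12 W_{i,i}$ recalled just before the statement to reduce to the case where $P$ has zero diagonal (this leaves $(H_\beta)^{-1}$ unchanged as a matrix, so it does not affect the $\eta$-term), and then enlarge $V$ to $V'=V\cup\{0\}$ by connecting the new vertex $0$ to each $i\in V$ with an edge of conductance $W'_{i,0}:=\eta_i$ (an edge being present exactly when $\eta_i>0$), keeping all conductances inside $V$ unchanged. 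Since $\nu_{V'}^{W'}$ is a probability measure by Theorem~\ref{thm_potential_fini}, its marginal on $\beta_V$ is automatically a probability measure; the whole point is then to identify that marginal with $\nu_V^{W,\eta}$, which simultaneously gives that $\nu_V^{W,\eta}$ is a probability.

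To compute the marginal I would write $H_\beta^{V'}$ in block form: because in the convention $H_\beta=2\beta-P$ the diagonal entries $2\beta_i$ do not see the graph structure, the $V\times V$ block of $H_\beta^{V'}$ is exactly $H_\beta^{V}$, the off-diagonal block is $-\eta$, and the corner is $2\beta_0$. The Schur complement then yields
\[
\det H_\beta^{V'}=\det H_\beta^{V}\cdot\bigl(2\beta_0-\langle \eta,(H_\beta^{V})^{-1}\eta\rangle\bigr),
\]
together with the positivity criterion $H_\beta^{V'}>0\iff H_\beta^{V}>0$ and $2\beta_0-\langle\eta,(H_\beta^{V})^{-1}\eta\rangle>0$. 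Writing $G=(H_\beta^{V})^{-1}$ and integrating the density of Theorem~\ref{thm_potential_fini} over $\beta_0$ alone, the substitution $s=2\beta_0-\langle\eta,G\eta\rangle$ turns the $\beta_0$-integral into $\tfrac12 e^{-\frac12\langle\eta,G\eta\rangle}\int_0^\infty e^{-s/2}s^{-1/2}\,ds=\sqrt{\pi/2}\,e^{-\frac12\langle\eta,G\eta\rangle}$. Collecting the prefactors, $(2/\pi)^{(|V|+1)/2}\sqrt{\pi/2}=(2/\pi)^{|V|/2}$, and the edge-energy term $\sum_{e\in E'}W'_e=\sum_{e\in E(V)}W_e+\langle\eta,1\rangle$, one recovers exactly the factor $e^{\langle\eta,1\rangle}e^{-\frac12\langle\eta,G\eta\rangle}$ multiplying $\nu_V^{W}(d\beta)$, i.e.\ the definition~\eqref{nu_eta}.

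For the Laplace transform I would simply apply the Laplace formula of Theorem~\ref{thm_potential_fini} to $V'$ with the test vector extended by $\lambda_0=0$. The edges inside $V$ reproduce $-\sum_{i\sim j}W_{i,j}(\sqrt{(1+\lambda_i)(1+\lambda_j)}-1)$; each edge $\{i,0\}$ contributes $-\eta_i(\sqrt{(1+\lambda_i)(1+\lambda_0)}-1)=-\eta_i(\sqrt{1+\lambda_i}-1)$ since $\lambda_0=0$, giving the term $-\langle\eta,\sqrt{1+\lambda}-1\rangle$; and $\prod_{i\in V'}(1+\lambda_i)^{-1/2}$ collapses to $\prod_{i\in V}(1+\lambda_i)^{-1/2}$. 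This produces formula~\eqref{eq:laplace-nubetaweta} (the $\sqrt{(1+\lambda_i)(1+\lambda_j)}-1$ term should carry a minus sign, as the $\eta=0$ case must reduce to the Laplace transform of Theorem~\ref{thm_potential_fini}).

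The main obstacle — in fact the only genuine computation — is the $\beta_0$-integration that produces the Green-function factor $e^{-\frac12\langle\eta,G\eta\rangle}$; everything else is bookkeeping around the Schur-complement identity and the constant prefactors. I expect no real difficulty, provided one keeps in mind the convention $H_\beta=2\beta-P$ with diagonal $2\beta_i$ independent of the graph, since it is precisely this convention that makes the $V\times V$ block of $H_\beta^{V'}$ coincide with $H_\beta^{V}$ and lets the marginal factorize cleanly into $\nu_V^{W,\eta}$.
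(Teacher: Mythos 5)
Your proof is correct and is essentially the paper's own argument: the paper likewise realizes \(\nu_V^{W,\eta}\) as the marginal of a plain \(\nu^{W}\)-measure on an enlarged vertex set (Lemma~\ref{lem_marg_cond}(i) with \(\eta=0\), together with Theorem~\ref{thm_potential_fini}, noting that any \(\hat\eta\in(\R_+)^U\) arises from a suitable choice of \(P_{U,U^c}\) --- your single extra vertex \(0\) with \(W'_{i,0}=\eta_i\) is precisely such a choice), and your explicit \(\beta_0\)-integration via the Schur complement is exactly the \(|U^c|=1\) instance of the block decomposition \eqref{eq-hbeta-productof3}--\eqref{prod_det} used in the paper's proof of Lemma~\ref{lem_marg_cond}. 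You are also right that the \(W\)-term in \eqref{eq:laplace-nubetaweta} should carry a minus sign; this is a typo in the statement, since the \(\eta=0\) case must reduce to \eqref{laplac_trans}.
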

It appears in the following lemma that this extension describes all marginal laws of \(\nu_V^{W}\), and that the larger family $\nu_V^{W,\eta}$ is stable by taking marginals and conditional distributions.
\begin{lemma}\label{lem_marg_cond}
 Assume that $V$ is finite and let \(U\subset V\) be a subset.
 Under \(\nu_V^{W,\eta}(d\beta)\),
\begin{enumerate}[label=(\roman*)] 
\item
\label{item_restriction} 
$\beta_{U}$ is distributed according to
\(
 \nu_{U}^{W_{U,U},\hat\eta},
\)
where 
\begin{align}\label{eta}
\hat\eta= \eta_{U}+ P_{U,U^c} (1_{U^c}),
\end{align}
\item
  \label{lem_conditioning}
conditionally on \(\beta_{U}\), \(\beta_{U^c}\) is distributed according to \(\nu_{U^c}^{\widecheck W,\widecheck\eta}\),
where
\(\widecheck P=(\widecheck W_{i,j})_{i,j\in U^c}\) and \(\widecheck \eta\in(\R_+)^{U^c}\) are the matrix and vector defined by
\[
\check P=P_{U^c,U^c}+ P_{U^c,U} \left((H_\beta)_{U,U}\right)^{-1} P_{U,U^c},
\;\;\; \check \eta=\eta_{U^c}+P_{U^c,U} \left((H_\beta)_{U,U}\right)^{-1}(\eta_{U}).
\]
\end{enumerate}
\end{lemma}
\begin{remark} Note that $\check P$ has non-zero diagonal coefficients.
\end{remark}
\noindent N.B. As we can observe, all the quantities with \(\check \cdot\) are relative to vectors or matrices on \(U^{c}\), while 
the quantities with \(\hat \cdot\) are relative to vectors or matrices on \(U\).
\begin{lemma}
  \label{lem-exp-martingale}
  Let \(\mathcal{G}=(V,E)\) be a finite connected graph endowed with conductances \(P=(W_{i,j})_{i,j\in V}\).
 Let  \((\eta_i)_{i\in V}\in \R_+^V\) be a vector with non-negative coefficients.
Let \(U\subset V\).
For $\beta\in \ddd_V^W$, define \(\psi_\beta=G_{\beta}\eta\) where \(G_{\beta}=H_{\beta}^{-1}\); define \(\hat{\eta}=\eta_{U}+P_{U,U^{c}}1_{U^{c}}\), \(\hat{G}_\beta^{U}=((H_{\beta})_{U,U})^{-1}\) and \(\hat{\psi}_\beta=\hat{G}_\beta^{U}(\hat{\eta})\). For any \(\lambda\in \mathbb{R}_{+}^{V}\), we have, $\nu_V^{W,\eta}$ a.s.,
  \begin{equation}
    \label{eq:exp-martingale-lemma}
    \E_{\nu_{V}^{W,\eta}}(e^{-\left< \lambda,\psi \right>-\frac{1}{2}\left< \lambda,G \lambda \right>}\left| \mathcal{F}_{U}\right.)=e^{-\left< \lambda_{U},\hat{\psi} \right>-\left< \lambda_{U^c},1_{U^c} \right>-\frac{1}{2}\left< \lambda_{U},\hat{G}^{U}\lambda_{U} \right>}
  \end{equation}
  where \(\mathcal{F}_{U}=\sigma(\beta_{i},i\in U)\).
\end{lemma}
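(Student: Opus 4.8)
The plan is to prove the identity \eqref{eq:exp-martingale-lemma} by conditioning on $\mathcal{F}_U = \sigma(\beta_i, i \in U)$, computing the conditional expectation using the explicit conditional law from Lemma~\ref{lem_marg_cond}, and recognizing the result as a Laplace-type transform to which the formula \eqref{eq:laplace-nubetaweta} can be applied. First I would split the exponent $-\langle \lambda, \psi\rangle - \frac{1}{2}\langle \lambda, G_\beta \lambda\rangle$ according to the block decomposition $V = U \cup U^c$. Here $\psi = G_\beta \eta$ and $G_\beta = H_\beta^{-1}$, so both $\psi$ and the quadratic form $\langle \lambda, G_\beta\lambda\rangle$ must be expressed in terms of the Schur complement of the block $(H_\beta)_{U,U}$. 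The standard block inversion formula for $H_\beta^{-1}$ will express $G_\beta$ in terms of $\hat G^U = ((H_\beta)_{U,U})^{-1}$ together with $\check P$ and the off-diagonal coupling $P_{U,U^c}$, which is precisely where the quantities $\check P$, $\check\eta$ of Lemma~\ref{lem_conditioning} enter.

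The key computational step is then to integrate out $\beta_{U^c}$. By Lemma~\ref{lem_marg_cond}(\ref{lem_conditioning}), the conditional law of $\beta_{U^c}$ given $\beta_U$ is $\nu_{U^c}^{\check W, \check\eta}$, so the conditional expectation reduces to an integral of the form $\int e^{-\langle \mu, \beta_{U^c}\rangle} \nu_{U^c}^{\check W, \check\eta}(d\beta_{U^c})$ for an appropriate effective parameter $\mu$ depending on $\lambda$ and on the block structure. I would apply the Laplace transform formula \eqref{eq:laplace-nubetaweta} of Lemma~\ref{lem_restriction} on the set $U^c$ with data $(\check W, \check\eta)$ to evaluate this integral in closed form. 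The delicate part is the bookkeeping: one must verify that, after the block inversion, the argument $\mu$ fed into the Laplace transform is a genuine non-negative vector so that \eqref{eq:laplace-nubetaweta} applies, and that the exponent produced by the Laplace formula combines with the leftover $\beta_U$-dependent terms to reconstruct exactly $e^{-\langle \lambda_U, \hat\psi\rangle - \langle \lambda_{U^c}, 1_{U^c}\rangle - \frac{1}{2}\langle \lambda_U, \hat G^U \lambda_U\rangle}$.

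The main obstacle I anticipate is the algebraic reduction of the full quadratic form $\langle \lambda, G_\beta \lambda\rangle$ and the linear term $\langle \lambda, G_\beta \eta\rangle$ into the conditional-law parameters: the cross terms coupling $\lambda_U$ and $\lambda_{U^c}$ through the off-diagonal blocks of $G_\beta$ must cancel against terms generated by the Laplace transform on $U^c$, leaving only the diagonal block $\hat G^U = ((H_\beta)_{U,U})^{-1}$ restricted to $U$. Concretely, I expect to use the Schur complement identity $G_\beta = \begin{pmatrix} \hat G^U + \hat G^U P_{U,U^c} \check G P_{U^c,U} \hat G^U & -\hat G^U P_{U,U^c} \check G \\ -\check G P_{U^c,U} \hat G^U & \check G \end{pmatrix}$, where $\check G = \check P^{-1}$ in the extended sense, and to track how $\hat\psi = \hat G^U \hat\eta$ with $\hat\eta = \eta_U + P_{U,U^c} 1_{U^c}$ emerges after substituting $\check\eta$ and the effective argument into the Laplace formula. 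Once this matching is checked, the identity follows; this general statement specializes to Lemma~\ref{exp-martingale} by taking $\ggg = \ggg_{n+1}$, $U = \tilde V_n$, and $\eta$ supported on the boundary so that $\psi = \psi^{(n+1)}$, $\hat\psi = \psi^{(n)}$, recovering the martingale and bracket relations of Proposition~\ref{prop-martingale-psi-n-crochet} by differentiating in $\lambda$.
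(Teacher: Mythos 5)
Your skeleton matches the paper's proof up to the last step: condition on $\mathcal{F}_U$ via Lemma~\ref{lem_marg_cond}~(ii), decompose $\left<\lambda,\psi\right>+\frac{1}{2}\left<\lambda,G_\beta\lambda\right>$ through the Schur complement of $(H_\beta)_{U,U}$, and integrate out $\beta_{U^c}$. But the final step as you describe it is wrong, and it is the heart of the lemma. After conditioning, the $\beta_{U^c}$-dependence of the integrand runs through $\check\psi=\check G^{U^c}\check\eta$ and $\left<\check\lambda,\check G^{U^c}\check\lambda\right>$, where $\check G^{U^c}=(\check H^{U^c})^{-1}=(2\beta_{U^c}-\check P)^{-1}$ is a \emph{resolvent}, hence a nonlinear function of $\beta_{U^c}$. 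There is no $\mathcal{F}_U$-measurable vector $\mu$ for which this integrand equals $e^{-\left<\mu,\beta_{U^c}\right>}$, so the Laplace transform formula \eqref{eq:laplace-nubetaweta} simply does not apply in the way you propose. The correct mechanism, and the one the paper uses, is that the family $\nu^{W,\eta}$ is closed under exactly this exponential tilting: from the definition \eqref{nu_eta} and the completion-of-squares identity
\begin{equation*}
\left<\check\lambda,\check\psi\right>+\tfrac{1}{2}\left<\check\lambda,\check G^{U^c}\check\lambda\right>+\tfrac{1}{2}\left<\check\eta,\check G^{U^c}\check\eta\right>=\tfrac{1}{2}\left<\check\lambda+\check\eta,\check G^{U^c}(\check\lambda+\check\eta)\right>,
\end{equation*}
one gets $e^{-\left<\check\lambda,\check\psi\right>-\frac{1}{2}\left<\check\lambda,\check G^{U^c}\check\lambda\right>+\left<1_{U^c},\check\lambda\right>}\,\nu_{U^c}^{\check W,\check\eta}(d\beta_{U^c})=\nu_{U^c}^{\check W,\check\lambda+\check\eta}(d\beta_{U^c})$, with $\check\lambda=\lambda_{U^c}+P_{U^c,U}\hat G^{U}\lambda_U$. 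The conditional expectation then equals the prefactor $e^{-\left<\lambda_U,\hat\psi\right>-\left<\lambda_{U^c},1_{U^c}\right>-\frac{1}{2}\left<\lambda_U,\hat G^U\lambda_U\right>}$ times the \emph{total mass} of $\nu_{U^c}^{\check W,\check\lambda+\check\eta}$, which is $1$ by Letac's Lemma~\ref{lem_restriction}. So what is invoked is the normalization statement of that lemma, not its Laplace transform; your proposal is missing the tilting identity, and without it the "bookkeeping" you defer cannot close.

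Two smaller but real errors in your Schur algebra would also derail the computation. You write $\check G=\check P^{-1}$; it must be $(2\beta_{U^c}-\check P)^{-1}$ (your version discards the $\beta_{U^c}$-dependence entirely, which would make the conditional expectation trivial and false). And the off-diagonal blocks of $G_\beta$ carry \emph{plus} signs: since $H_\beta$ has blocks $-P_{U,U^c}$ off the diagonal, \eqref{eq-schur-pour-gbeta} gives $(G_\beta)_{U,U^c}=+\hat G^U P_{U,U^c}\check G^{U^c}$, not $-\hat G^U P_{U,U^c}\check G^{U^c}$. The sign matters: it is why the effective parameter $\check\lambda=\lambda_{U^c}+P_{U^c,U}\hat G^U\lambda_U$ is nonnegative (a sum of nonnegative terms), which in turn is needed for $\nu_{U^c}^{\check W,\check\lambda+\check\eta}$ to be a legitimate member of the family; with your signs the nonnegativity check you correctly flag as "the delicate part" would fail.
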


\begin{proof}[Proof of Lemma~\ref{lem_restriction} and Lemma~\ref{lem_marg_cond}]
Lemma~\ref{lem_restriction} and the assertions \ref{item_restriction} and \ref{lem_conditioning} of Lemma~\ref{lem_marg_cond} are consequences of the same decomposition of the measure $\nu_V^{W,\eta}$. It is partially inspired by computations in \cite{Letac}.
  We write \(H_{\beta}\) as block matrix
  \[H_{\beta}=
    \begin{pmatrix}
      H_{U,U} & -P_{U,U^{c}} \\ -P_{U^{c},U} & H_{U^{c},U^{c}} 
    \end{pmatrix} \text{ and define }\hat {G}^{U}=(H_{U,U})^{-1}.
  \]
  Now, define the Schur's complement
  \begin{align}\label{Hcheck}
  \check H^{U^{c}}=H_{U^{c},U^{c}}-P_{U^{c},U}\hat{G}^{U}P_{U,U^{c}}, 
  \end{align}
  and
  \begin{align*}
  \check G^{U^c}=\left( \check H^{U^{c}}\right)^{-1}.
  \end{align*}
 We have
  \begin{equation}
    \label{eq-hbeta-productof3}
H_{\beta}=
    \begin{pmatrix}
      I_{U} & 0 \\ -P_{U^{c},U}\hat{G}^{U} & I_{U^{c}}
    \end{pmatrix}
    \begin{pmatrix}
      H_{U,U} & 0 \\ 0 & \check H^{U^{c}}
    \end{pmatrix}
    \begin{pmatrix}
      I_{U} & -\hat{G}^{U}P_{U,U^{c}} \\ 0 & I_{U^{c}}
    \end{pmatrix}. 
\end{equation}
Remark that with notations of \ref{lem_conditioning} of Lemma~\ref{lem_marg_cond} we have
$$
\check H^{U^c}=2\beta_{U^c}-\check P.
$$
By \eqref{eq-hbeta-productof3}, we have
  \begin{equation}
    \label{eq-1hbeta1}
    \begin{aligned}
    &\left< 1,H_{\beta}1 \right>
    \\=&
    \left< 1_{U^c},\check{H}^{U^c}1_{U^c} \right>+\left< 1_{U},H_{U,U}1_{U} \right>+
    \left< 1_{U^c},P_{U^{c},U}\hat{G}^{U} P_{U,U^{c}} 1_{U^c}\right>-2\left< 1_{U},P_{U,U^{c}} 1_{U^c}\right>.
    \end{aligned}
 \end{equation}
On the other hand, by~\eqref{eq-hbeta-productof3} again, we have
 \begin{equation}
    \label{eq-schur-pour-gbeta}
    G_{\beta}= H_\beta^{-1}=
               \begin{pmatrix}
                 I_{U} & \hat{G}^{U}P_{U,U^{c}} \\ 0 & I_{U^{c}}
               \end{pmatrix}                                                           \begin{pmatrix}
\hat{G}^{U} & 0\\ 0 & \check{G}^{U^c}                                                               \end{pmatrix}
                          \begin{pmatrix}
                            I_{U} & 0 \\ P_{U^{c},U}\hat{G}^{U} & I_{U^{c}}
                          \end{pmatrix}\\
  \end{equation}
  therefore, since 
  \[ \begin{pmatrix}
                            I_{U} & 0 \\ P_{U^{c},U}\hat{G}^{U} & I_{U^{c}}
                          \end{pmatrix}
                          \begin{pmatrix}
                            \eta_{U} \\ \eta_{U^{c}}
                          \end{pmatrix}=
                          \begin{pmatrix}
                            \eta_{U}\\ \check{\eta}
                          \end{pmatrix}
,\]
we get,
  \begin{equation}
    \label{eq-1gbeta1}
\left< \eta,G_{\beta}\eta \right>=\left< \eta_{U},\hat{G}^{U}\eta_{U} \right>+\left< \check \eta, \check{G}^{U^c} \check\eta \right>.
\end{equation}
Combining \eqref{eq-1hbeta1} and \eqref{eq-1gbeta1} we have
\begin{align}\label{term_exp}
 \left< 1,H_{\beta}1 \right>+\left< \eta,G_{\beta}\eta \right>-2\left<\eta,1\right>
  =&\left< 1_{U^c},\check{H}^{U^c}1_{U^c} \right>+ \left< \check{\eta},\check{G}^{U^c} \check{\eta}  \right>-2\left<\check\eta,1_{U^c}\right>\\
\nonumber   &+\left< 1_{U},H_{U,U}1_{U} \right>+\left< \hat\eta,\hat{G}^{U}\hat\eta \right>-2\left<\hat\eta,1_{U}\right>.
\end{align}
By \eqref{eq-hbeta-productof3}, we also have
  \begin{align}\label{prod_det}
  \det H_{\beta}=\det H_{U,U} \det \check{H}^{U^c},\ \ \mathds{1}_{H_{\beta}>0}=\mathds{1}_{H_{U,U}>0}\mathds{1}_{\check{H}^{U^c}>0}.
  \end{align}
Combining \eqref{term_exp} and \eqref{prod_det}, we have,
  \begin{align}
  \nonumber
& \left( \frac{2}{\pi} \right)^{|V|/2} e^{-\frac{1}{2}\left< 1,H_{\beta}1 \right>-\frac{1}{2}\left< \eta,G_{\beta}\eta \right>+\left<\eta,1\right>}\frac{\mathds{1}_{H_{\beta}>0}}{\sqrt{\det H_{\beta}}} 
    \\
    \label{decomposition}
    &=\left( \frac{2}{\pi} \right)^{|U|/2} e^{-\frac{1}{2} \left< 1_U,H_{U,U}1_U \right>-\demi\left<\hat \eta,\hat{G}^{U}\hat\eta \right>+\left<\hat\eta,1_{U} \right>  }\frac{\mathds{1}_{H_{U,U}>0}}{\det H_{U,U}} 
    \\
     \nonumber
    &\;\;\;\;\cdot \left( \frac{2}{\pi} \right)^{|U^{c}|/2}  e^{-\frac{1}{2}\left< 1_{U^c},\check{H}^{U^c}1_{U^c} \right>
   -\demi \left< \check{\eta},\check{G}^{U^c} \check{\eta}  \right>+\left<\check\eta,1_{U^c}\right>}\frac{\mathds{1}_{ \check{H}^{U^c}>0}}{\sqrt{\det \check{H}^{U^c}}} .
  \end{align}
We remark that the left-hand side is the density of $\nu_V^{W,\eta}(d\beta)$, that the first term of the right-hand side corresponds to the density of $\nu_{U}^{W_{U,U},\hat\eta}(d\beta_U)$ and that, $\beta_U$ being fixed, the second term of the right-hand side is the density of $\nu_{U^c}^{\check W,\check\eta}(d\beta_{U^c})$. (Indeed, as remarked above, $\check H^{U^c}=2\beta_{U^c}-\check P$ and $\check P$, $\check \eta$ are $\beta_U$-measurable).
\ali
{\it Proof of Lemma~\ref{lem_restriction}.} Take $\eta=0$. Then $\check \eta=0$. Integrating on $d\beta_{U^c}$ on both sides of \eqref{decomposition}, with $\beta_{U}$ fixed, gives 
 \begin{align*}
\int \left( \frac{2}{\pi} \right)^{|V|/2} e^{-\frac{1}{2}\left< 1,H_{\beta}1 \right>}\frac{\mathds{1}_{H_{\beta}>0}}{\sqrt{\det H_{\beta}}} d\beta_{U^c}
    =\left( \frac{2}{\pi} \right)^{|U|/2} e^{-\frac{1}{2} \left< 1_U,H_{U,U}1_U \right>-\demi\left<\hat \eta,\hat{G}^{U}\hat\eta \right>+\left< 1_{U},\hat\eta \right> }\frac{\mathds{1}_{H_{U,U}>0}}{\det H_{U,U}}\\
\end{align*}
since $\int \nu_{U^c}^{\check W}(d\beta_{U^c})=1$ by Theorem~\ref{thm_potential_fini}. Integrating on $d\beta_U$, it gives $\int \nu_U^{W_{U,U},\hat\eta}(d\beta_U)=1$, since $\nu_V^W$ is a probability. Hence, $\nu_U^{W_{U,U},\hat\eta}$ is a probability. This implies Lemma~\ref{lem_restriction} since this restriction procedure allows to obtain all possible parameters of the family of measures $\nu_V^{W,\eta}$. Indeed, for $V$, $W$, $\eta$, consider $\tilde V=V\cup\{\delta\}$ the set obtained by adding an extra point, and define $(\tilde W_{i,j})_{i,j\in \tilde V}$ by $\tilde W_{V,V}=W$ and $W_{i,\delta}=\eta_i$ for $i\in V$. Then if we apply the previous identity to $\tilde V$ and $U:=V\subset \tilde V$, we get $\hat\eta=\eta$ and $\nu_{V}^{W,\eta}$ is a probability by the previous argument.
\ali
{\it Proof of Lemma~\ref{lem_marg_cond}.}
 Integrating on $d\beta_{U^c}$ on both sides of \eqref{decomposition}, with $\beta_{U}$ fixed, gives 
 \begin{align*}
&\int \left( \frac{2}{\pi} \right)^{|V|/2} e^{-\frac{1}{2}\left< 1,H_{\beta}1 \right>-\frac{1}{2}\left< \eta,G_{\beta}\eta \right>+\left<\eta,1\right>}\frac{\mathds{1}_{H_{\beta}>0}}{\sqrt{\det H_{\beta}}}\left(d\beta_{U^c}\right)
\\   
    =
    &
    \left( \frac{2}{\pi} \right)^{|U|/2} e^{-\frac{1}{2} \left< 1_U,H_{U,U}1_U \right>-\demi\left<\hat \eta,\hat{G}^{U}\hat\eta \right>+\left< 1_{U},\hat\eta \right> }\frac{\mathds{1}_{H_{U,U}>0}}{\det H_{U,U}}\\
\end{align*}
since $\int \nu_{U^c}^{\check W,\check \eta}(d\beta_{U^c})=1$ by Lemma~\ref{lem_restriction}. Hence, the marginal distribution of $\beta_{U}$ is $\nu_{U}^{W_{U,U},\hat\eta}$, proving
\ref{item_restriction}.
Finally, \ref{lem_conditioning} is a consequence of  the conditional probability density formula. Indeed, if we denote temporary by $f(\beta)$ the density of $\nu_V^W(d\beta)$, by $f_U(\beta_U)$ its marginal density on $U$ and by $f_{U^c}^{\beta_U}(\beta_{U^c})$ the conditional density of $\beta_{U^c}$ conditioned on $\beta_U$, we have
by \eqref{decomposition} and \ref{item_restriction},
$$
f_{U^c}^{\beta_U}(\beta_{U^c})=\frac{f(\beta)}{f_U(\beta_U)}=
\left( \frac{2}{\pi} \right)^{|U^{c}|/2}  e^{-\frac{1}{2}\left< 1_{U^c},\check{H}^{U^c}1_{U^c} \right>
   -\demi \left< \check{\eta},\check{G}^{U^c} \check{\eta}  \right>+\left<\check\eta,1_{U^c}\right>}\frac{\mathds{1}_{ \check{H}^{U^c}>0}}{\sqrt{\det \check{H}^{U^c}}}.
 $$
 Since $\check{H}^{U^c}=2\beta_{U^c}-\widecheck P$, $\check{G}^{U^c}=(\check{H}^{U^c})^{-1}$ and $\widecheck P$, $\widecheck\eta$ are $\beta_U$-measurable, it implies that the right-hand side is the density of $\nu_{U^c}^{\check W,\check \eta}(d\beta_{U^c})$.
\end{proof}

\begin{proof}[Proof of Lemma~\ref{lem-exp-martingale}]
We take the same notations as in the proof of Lemma~\ref{lem_marg_cond}.
By Lemma~\ref{lem_marg_cond}, under $\nu_V^W(d\beta)$, the law of \(\beta_{U^{c}}\), conditionally on \(\beta_{U}\), is \(\nu_{U^{c}}^{\check{W},\check{\eta}}\). Now, we set
\[
\check{\psi}=\check{G}^{U^c}\check{\eta}.
\]
By \eqref{eq-schur-pour-gbeta}, we have
  \begin{align*}
    &\left< \lambda,\psi \right>+\frac{1}{2}\left< \lambda,G\lambda \right>=\left< \lambda,G\eta \right>+\frac{1}{2}\left< \lambda,G\lambda \right>\\
    &=\left( \lambda_{U},\lambda_{U}\hat{G}^{U}P_{U,U^{c}}+\lambda_{U^{c}} \right)
      \begin{pmatrix}
        \hat{G}^{U} & 0 \\ 0 & \check{G}^{U^c}
      \end{pmatrix}
                                         \begin{pmatrix}
                                           \eta_{U}\\ P_{U^{c},U}\hat{G}^{U}\eta_{U}+\eta_{U^{c}}
                                         \end{pmatrix}\\
    &+ \frac{1}{2}\left( \lambda_{U}, \lambda_{U}\hat{G}^{U}P_{U,U^{c}}+\lambda_{U^{c}} \right) \begin{pmatrix}
        \hat{G}^{U} & 0 \\ 0 & \check{G}^{U^c}
      \end{pmatrix}
                                         \begin{pmatrix}
                                           \lambda_{U}\\ P_{U^{c},U}\hat{G}^{U}\lambda_{U}+\lambda_{U^{c}}.
                                         \end{pmatrix}
  \end{align*}
If we define \(\check{\lambda}=\lambda_{U^{c}}+P_{U^{c},U}\hat{G}^{U}\lambda_{U}\in \mathbb{R}_{+}^{U^{c}}\), we have
  \begin{align*}
    \left< \lambda,\psi \right>+\frac{1}{2}\left< \lambda,G\lambda \right>&=\left< \check{\lambda},\check{\psi} \right>+\frac{1}{2}\left< \check{\lambda},\check{G}^{U^c}\check{\lambda} \right>+\left< \lambda_{U},\hat G^{U} \eta_{U}  \right>+\frac{1}{2}\left< \lambda_{U},\hat{G}^{U}\lambda_{U} \right>
    \\
&=\left< \check{\lambda},\check{\psi} \right>+\frac{1}{2}\left< \check{\lambda},\check{G}^{U^c}\check{\lambda} \right>+\left< \lambda_{U},\hat{\psi} \right>+\frac{1}{2}\left< \lambda_{U},\hat{G}^{U}\lambda_{U} \right>-\left< 1_{U^{c}},\check{\lambda}-\lambda_{U^{c}} \right>.
  \end{align*}
  Now, remark that
  $$
  \left< \check{\lambda},\check{\psi} \right>+\frac{1}{2}\left< \check{\lambda},\check{G}^{U^c}\check{\lambda} \right>+
  \frac{1}{2}\left< \check{\eta},\check{G}^{U^c}\check{\eta} \right>=
  \frac{1}{2}\left< \check{\lambda}+\check{\eta},\check{G}^{U^c}(\check{\lambda}+\check{\eta}) \right>.
  $$
We get,
\begin{align*}
 \E_{\nu_V^{W,\eta}}\left(e^{-\left< \lambda,\psi \right>-\frac{1}{2}\left< \lambda,G\lambda \right>}\left| \mathcal{F}_{U}\right.\right)
 &=e^{-\left< \lambda_{U},\hat{\psi} \right>-\left< \lambda_{U^{c}},1_{U^{c}} \right>-\frac{1}{2}\left< \lambda_{U},\hat{G}^{U}\lambda_{U} \right>}
 \E_{\nu_{U^c}^{\check W,\check \eta}}\left(e^{-\left< \check{\lambda},\check{\psi} \right>-\frac{1}{2}\left< \check{\lambda},\check{G}^{U^c}\check{\lambda} \right>
 +\left< 1_{U^{c}},\check{\lambda} \right>}\right)\\
 &=
 e^{-\left< \lambda_{U},\hat{\psi} \right>-\left< \lambda_{U^{c}} , 1_{U^{c}}\right>-\frac{1}{2}\left< \lambda_{U},\hat{G}^{U}\lambda_{U} \right>}
  \E_{\nu_{U^c}^{\check W,\check\lambda+\check \eta}}\left(1 \right)
\end{align*}
which concludes the proof of the lemma, using that $\nu_{U^c}^{\check W, \check \lambda+\check \eta}$ is a probability
\end{proof}
\subsection{Proof of Lemma~\ref{exp-martingale}}
Remark that since $\psi^{(n)}$ is defined for all $n$ by
$$
\begin{cases}
(H_\beta \psi^{(n)})_{V_n}=0,
\\
\psi^{(n)}_{V_n^c}=1,
\end{cases}
$$  
we have $\psi^{(n)}_{V_n}=((H_\beta)_{V_n, V_n})^{-1}(\eta^{(n)})$, where
$
\eta^{(n)}=P_{V_n,V_n^c}(1_{V_n^c}).
$
Moreover, by Lemma~\ref{lem_marg_cond} (i), under $\nu_V^W(d\beta)$, we know that $\beta_{V_{n}}$ has law $\nu_{V_n}^{W,\eta^{(n)}}$. Using
Lemma~\ref{lem-exp-martingale} applied to $V=V_{n+1}$ and $U=V_n$, we have that $\hat{G}^{(n+1)}_{V_{n+1},V_{n+1}}$ corresponds to $G_\beta$ in Lemma~\ref{lem-exp-martingale}
and $\hat{G}^{(n)}_{V_{n},V_{n}}$ to $\hat G^U$, $\eta^{(n+1)}$ to $\eta$, and $\eta^{(n)}$ to $\hat \eta$. Hence, we get that a.s.
\begin{align*}
\E_{\nu_V^W}\left(e^{-\left<\lambda_{V_{n+1}},\psi^{(n+1)}_{V_{n+1}}\right>-\demi\left<\lambda_{V_{n+1}}, \hat G^{(n+1)}\lambda_{V_{n+1}}\right>} \left| \fff^{(n)}\right.\right)&=
e^{-\left<\lambda_{V_n},\psi^{(n)}_{V_n}\right>-\left<\lambda_{V_{n+1}\setminus V_n},1_{V_{n+1}\setminus V_n}\right>
-\demi\left<\lambda_{V_{n}}, \hat G^{(n)}\lambda_{V_{n}}\right>}
\\
&=
e^{-\left<\lambda_{V_{n+1}},\psi^{(n)}_{V_{n+1}}\right>-\demi\left<\lambda_{V_n}, \hat G^{(n)}\lambda_{V_n}\right>}
\end{align*}
since $\psi^{(n)}_{V_{n+1}\setminus V_n}=1$. This concludes the proof since $\psi^{(n)}$ and $\psi^{(n+1)}$ both equal 1 on $V_{n+1}^c$.

\section{Passing to the limit: proof of Theorem~\ref{convergence}, Proposition~\ref{coro-escape-proba}, Proposition~\ref{0-1law}}\label{ss_thm1}

\subsection{Proof of Theorem~\ref{convergence} i) and ii)}
\label{sec:repr-sums-paths}
\begin{proof}[Proof of Theorem~\ref{convergence} i) and ii)]
By the path representation \eqref{path_G}, we know that $\hat G_\beta^{(n)}(i,j)$ is non-decreasing for all $i,j\in V$ since the set of paths $\ppp_{i,j}^{V_n}$ is non-decreasing, and that $\hat G_\beta^{(n)}(i,j)\le G^{(n)}(i,j)$ since $\ppp_{i,j}^{V_n}\subset \ppp_{i,j}^{\widetilde V_n}$. Hence, it convergences a.s. to a random variable $\hat G(i,j)$. Since $\hat G^{(n)}(i,j)>0$ as soon as $i,j\in V_n$ (indeed, $V_n$ is connected), we have $\hat G(i,j)>0$ a.s.. It remains to prove that $\hat G(i,j)<\infty$.
As \(\hat{G}^{(n)}(i,i)\) converges a.s.\ to \(\hat{G}(i,i)\) and is non-decreasing,  
for any \(h\geq 0\),
\begin{align*}
  \nu_V^W(\hat{G}(i,i)\leq h)&=\nu_V^W(\lim_{n\to \infty}\hat{G}^{(n)}(i,i)\leq h)\\
&=\lim_{n\to \infty}\nu_V^W(\hat{G}^{(n)}(i,i)\leq h)\\
&\geq \lim_{n\to \infty}\nu_V^W(G^{(n)}(i,i)\leq h)
\\
&=\P\left({1\over 2\Gamma}\leq h\right) ,
\end{align*}
where $\Gamma$ is a gamma random variable with parameters $(\demi,1)$. In the last equality, we used that \(\frac{1}{2{G}^{(n)}(i,i)}\) has gamma law with parameters $(\demi,1)$ by Theorem~\ref{thm_u_beta_STZ}.
Therefore, \(\hat{G}(i,i)<\infty\) a.s.
For the off diagonal term, 
since \((H_{\beta})_{V_{n},V_n}\) is positive definite, we have by Cauchy-Schwarz inequality
\[\hat G^{(n)}(i,j)=\left<\delta_i,\hat G^{(n)}\delta_j\right>\leq \sqrt{\left<\delta_i,\hat G^{(n)}\delta_i\right>\left<\delta_j,\hat G^{(n)}\delta_j\right>}=\sqrt{\hat G^{(n)}(i,i)\hat G^{(n)}(j,j)}\]
therefore, \(\hat G(i,j)\leq \sqrt{\hat G(i,i)\hat G(j,j)}\) and \(\hat{G}(i,j)\) is a.s.\ finite.

From Proposition~\ref{prop-martingale-psi-n-crochet}, we know that \(\psi^{(n)}(i)\) is a positive martingale for all \(i\in V\).
As a positive martingale, \(\psi^{(n)}(i)\) converges a.s.\ to some non-negative integrable random variable \(\psi(i)\).

It remains to show that the limits $\psi$ and $\hat G$ do not depend on the choice of the exhausting sequence \((V_{n})\). Assume that \((\Omega_{n})\) is another increasing exhausting sequence, we can similarly construct the martingale \(\phi^{(n)}(i)\) associated with \(\Omega_{n}\). 
As \((\Omega_{n})\) and \((V_{n})\) are exhausting,
we can construct a subsequence \(n_k\) such that the alternating sequence \(V_{n_1},\Omega_{n_2}, V_{n_3}, \ldots\) is increasing and thus the alternating sequence \(\psi^{(n_1)}(i), \phi^{(n_2)}(i), \psi^{(n_3)}(i), \ldots\) is a martingale for all \(i\in V\). This martingale converges a.s.\ and this identifies
the limits of \(\psi^{(n)}(i)\) and \(\phi^{(n)}(i)\). The argument is the same for $\hat G$ since the sequence of Green functions associated with the alternating sequence of subsets is non-decreasing and converges a.s.
\end{proof}
\subsection{Representation of the VRJP as a mixture  on the infinite graphs: proof of iii)}
\label{sec:repr-infin-graph}
With the coupling of Section~\ref{ss_G-et-tilde}, by Proposition~\ref{relate_psi_G}, we have for $\beta\in \ddd_V^W$ and $\gamma >0$, 
 \[{G^{(n)}(i,j)}=\hat G^{(n)}(i,j)+{1\over 2\gamma} \psi^{(n)}(i)\psi^{(n)}(j).\]
By Theorem~\ref{convergence}~\ref{convergence_i} and~\ref{convergence_ii}, we have that $\nu_V^W(d\beta,d\gamma)$-a.s.
\begin{equation}
\label{limit}
\lim_{n\to \infty} G^{(n)}(i,j)=G(i,j),
\end{equation}
where \(G(i,j)\) is defined in Theorem~\ref{convergence}~\ref{convergence_iii}.

The next corollary of Proposition~\ref{estimate_disertori} gives the necessary uniform integrability on jump rates
to extend the representation of the VRJP for finite graphs to infinite graphs.
\begin{corollary}
\label{coro-ui-tilG}
   For any \(i,j\in V\), there exists \(n_{0}\in \mathbb{N}\), such that the family of random variables \(\left\{\frac{G^{(n)}(i_{0},j)}{G^{(n)}(i_{0},i)}\right\}_{n\geq n_{0}}\) is uniformly integrable under $\nu_V^W(d\beta,d\gamma)$.
\end{corollary}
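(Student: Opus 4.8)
The plan is to recognize the ratio $G^{(n)}(i_0,j)/G^{(n)}(i_0,i)$ as an exponential of a difference of the fields $u^{(n)}$ on the finite wired graph $\ggg_n$, and then to feed a single, $n$-independent path into the a priori estimate of Proposition~\ref{estimate_disertori} to obtain a uniform exponential moment, which in turn yields a uniform $L^2$ bound and hence uniform integrability. First I would record the algebraic identity: for $n$ large enough that $i_0,i,j\in V_n$, the matrix $G^{(n)}=(H_\beta^{(n)})^{-1}$ is the Green function of $\ggg_n$, so by the definition \eqref{u(i,j)} applied on $\ggg_n$ (Theorem~\ref{thm_u_beta_STZ}) one has $e^{u^{(n)}(i_0,k)}=G^{(n)}(i_0,k)/G^{(n)}(i_0,i_0)$ for every $k\in\tilde V_n$. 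Dividing the instances $k=j$ and $k=i$ gives
\[
\frac{G^{(n)}(i_0,j)}{G^{(n)}(i_0,i)}=e^{u^{(n)}(i_0,j)-u^{(n)}(i_0,i)}.
\]
Writing $X_n=u^{(n)}(i_0,j)-u^{(n)}(i_0,i)$ and $Y_n=e^{X_n}$, the claim becomes uniform integrability of $(Y_n)_{n\ge n_0}$.

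Next I would fix the path once and for all. Since $\ggg$ is connected there is a simple path $\sigma$ in $\ggg$ from $i$ to $j$, of some fixed length $K$, using finitely many vertices; choose $n_0$ so that all these vertices lie in $V_{n_0}$. Then for every $n\ge n_0$ the path $\sigma$ lies in $V_n\subset\tilde V_n$, never meets $\delta_n$, and carries the original conductances $W^{(n)}_{\sigma_k,\sigma_{k+1}}=W_{\sigma_k,\sigma_{k+1}}$. Choosing $\eta>0$ with $2\eta\le\min_k W_{\sigma_k,\sigma_{k+1}}$ (a fixed choice independent of $n$), Proposition~\ref{estimate_disertori} applied on the finite graph $\ggg_n$ with starting vertex $i_0$ yields the $n$-uniform bound
\[
\E\!\left(e^{\eta\cosh(X_n)}\right)\le 2^{K/2},\qquad n\ge n_0.
\]

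Finally I would convert this into an $L^2$ bound. Using $\cosh(x)\ge\demi e^{x}$ one gets $\eta\cosh(X_n)\ge\frac{\eta}{2}Y_n$, and using $e^{t}\ge t^2/2$ for $t\ge 0$ with $t=\frac{\eta}{2}Y_n$ one gets $Y_n^2\le\frac{8}{\eta^2}e^{(\eta/2)Y_n}\le\frac{8}{\eta^2}e^{\eta\cosh(X_n)}$. Taking expectations gives $\sup_{n\ge n_0}\E(Y_n^2)\le\frac{8}{\eta^2}2^{K/2}<\infty$, and a family bounded in $L^2$ is uniformly integrable, which is the assertion. (In fact the same argument bounds every moment $\E(Y_n^p)$ uniformly, via $e^{(\eta/2)Y_n}\ge((\eta/2)Y_n)^p/p!$, but $L^2$ already suffices.)

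The main obstacle is securing genuine uniformity in $n$ when invoking Proposition~\ref{estimate_disertori}: the estimate must be applied on the \emph{varying} finite graphs $\ggg_n$, so one must be sure that a single path of fixed length $K$ and a fixed threshold $\eta$ serve all large $n$, with the constant $2^{K/2}$ not deteriorating. This rests precisely on the structure of the wired restriction (Definition~\ref{wired_bd_def}), which leaves the conductances inside $V_n$ unchanged, so the chosen interior path keeps its weights as $n$ grows and the bound stays inert; once this is in place, the passage from the $\cosh$-exponential moment to the $L^2$ bound is routine.
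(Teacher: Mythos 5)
Your proof is correct and follows essentially the same route as the paper: identify \(G^{(n)}(i_0,j)/G^{(n)}(i_0,i)\) with \(e^{u^{(n)}(i_0,j)-u^{(n)}(i_0,i)}\), fix once and for all a path from \(i\) to \(j\) inside \(V_{n_0}\) (whose conductances the wired restriction leaves untouched), and invoke Proposition~\ref{estimate_disertori} on each \(\ggg_n\) to get an \(n\)-uniform bound \(\E(e^{\eta\cosh(X_n)})\le 2^{K/2}\), hence a uniform \(L^2\) bound and uniform integrability. The only difference is that you make explicit the elementary constants (\(\cosh x\ge \frac{1}{2}e^{x}\) and \(e^{t}\ge t^{2}/2\)) that the paper absorbs into an unspecified \(c>0\).
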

\begin{proof}
Choose \(n_{0}\) such that \(i,j\in V_{n_0}\), and \(i\) and \(j\) are connected by a path in \(V_{n_0}\). Denote by \(K\) the distance between \(i\) and \(j\) for the graph distance
in \(V_{n_0}\) and let $(\sigma_0=i,\sigma_1, \ldots, \sigma_K=j)$ be a directed path from  $i$ to $j$ in $V_{n_0}$. Note that it is also a directed path in any $V_n$ for $n\ge n_0$ since $V_n$ is increasing. Let 
$$\eta:=\demi\min_{k=0, \ldots, K-1}(W_{\sigma_k,\sigma_{k+1}})>0.
$$ 
Let $c(\eta)>0$ be a positive constant depending only on $\eta$ such that $e^{2x}\le c(\eta)e^{\eta \cosh(x)}$ for all reals $x$ (which exists since $2\vert x\vert\le \eta\cosh(x)$ for $x$ large enough and since $\cosh(x)\ge 1$ for all $x$). We can write with Notation \eqref{u(i,j)}, for $n\ge n_0$,
\begin{align*}
 \left(\frac{G^{(n)}(i_{0},j)}{G^{(n)}(i_{0},i)}\right)^{2}=e^{2(u^{(n)}(i_0,j)-u^{(n)}(i_0,i))}&=\prod_{k=0}^{K-1} e^{2(u^{(n)}(i_0,\sigma_{k+1})-u^{(n)}(i_0,\sigma_k))}
 \\
 &\le c(\eta)^{K}\prod_{k=0}^{K-1} e^{\eta\cosh((u^{(n)}(i_0,\sigma_{k+1})-u^{(n)}(i_0,\sigma_k))}.
\end{align*}
By Theorem~\ref{thm_u_beta_STZ} and Lemma~\ref{coupling}, under $\nu_V^W(d\beta,d\gamma)$,  $u^{(n)}(i_0,\cdot)$ has law ${\mathcal{Q}}_{i_0}^W$. Proposition~\ref{estimate_disertori} then implies that 
for \(n\geq n_{0}\),
\[\E_{\nu_V^W}\left(\left( \frac{G^{(n)}(i_{0},j)}{G^{(n)}(i_{0},i)}\right)^{2}\right)\leq 
e^{\eta K}2^{K/2}c(\eta)^K.\]
The family is uniformly bounded in \(L^{2}\), in particular uniformly integrable.
\end{proof}

Consider now a connected finite subset \(\Lambda\subset V\) containing \(i_0\) and set
\[
\partial^+\Lambda=\{j\in \Lambda^c, \; \exists i\in \Lambda \hbox{ such that \(i\sim j\) }\}.
\]
 Consider also a real $t_0>0$. Let \(T\) be the following stopping time
\[
T=t_0\wedge \inf\{t\ge 0,\ Z_{t}\notin \Lambda \}.
\]
By construction, the law of the VRJP in exchangeable time scale on \(\mathcal{G}\) up to time \(T\) equals the law of the VRJP in exchangeable time scale on \(\mathcal{G}_{n}\) up to time \(T\),
for all \(n\) such that \(\Lambda\cup \partial^+\Lambda \subset V_n\). For convenience, in this proof we write $\P_{i_0}^{\operatorname{VRJP}, \ggg}$ for its law on $\ggg$ and $\P_{i_0}^{\operatorname{VRJP}, \ggg_n}$ for its law on $\ggg_n$. Hence, our previous discussion formally means that $\P_{i_0}^{\operatorname{VRJP}, \ggg}\left((Z_t)_{t\le T}\in \cdot\right)= \P_{i_0}^{\operatorname{VRJP}, \ggg_n}\left((Z_t)_{t\le T}\in \cdot\right)$, for $n$ large enough.  We denote by
\[
\ell_i(T)=\int_0^T \mathds{1}_{Z_u=i}\; du
\]
the local time of \(Z\) up to time \(T\). Using 
Corollary~\ref{rep_VRJP_fini} and the coupling in 
Lemma~\ref{coupling},
the VRJP in exchangeable time scale on \(\ggg_n\), starting at \(i_0\),  is a mixture of Markov jump processes with jump rates from \(i\) to \(j\)
\begin{eqnarray}\label{jumping-n}
\demi W^{(n)}_{i,j} {G^{(n)}(i_0,j)\over G^{(n)}(i_0,i)}
\end{eqnarray}
under the law $\nu_V^W(d\beta,d\gamma)$. We denote by
\[
\tilde \beta^{(n)}_i=\sum_{j\sim i} \demi W^{(n)}_{i,j} {G^{(n)}(i_0,j)\over G^{(n)}(i_0,i)}
\]
the holding time at site \(i\) (note that $\tilde\beta^{(n)}_i=\beta^{(n)}_i$ for $i\neq i_0$). We denote by \(P_{i_0}^{\operatorname{MJP}}\) the law of the Markov jump process
with jump rates \(\demi W_{i,j}\) starting at \(i_0\). The Radon-Nykodim derivative of the
law of \((Z_t)_{t\le T}\) under 
the law of the Markov jump process with jump rates (\ref{jumping-n}) with respect to its law under \(P^{\operatorname{MJP}}_{i_0}\) is
\begin{equation}
\label{RN-Gn}
e^{-\sum_{i\in \Lambda} \ell_i(T)( \tilde\beta^{(n)}_i-\demi W_i)} {G^{(n)}(i_0,Z_T)\over G^{(n)}(i_0,i_0)},
\end{equation}
where as usual \(W_i=\sum_{j\sim i} W_{i,j}\). We postpone the proof of this formula to the end this subsection. 

Formula \eqref{RN-Gn} implies that for all positive
bounded test functions \(F\), for $n$ large enough,
\begin{align}
\nonumber &\E_{i_0}^{\operatorname{VRJP}, \ggg}\left( F((Z_t)_{t\le T})\right)=\E_{i_0}^{\operatorname{VRJP}, \ggg_n}\left( F((Z_t)_{t\le T})\right)
\\
\label{cv-integrals}
=&
\int \sum_{{ j\in \Lambda\cup \partial^+\Lambda}} E^{\operatorname{MJP}}_{i_0}\left( \mathds{1}_{Z_T=j} F((Z_t)_{t\le T})
e^{-\sum_{i\in \Lambda} \ell_i(T) (\tilde\beta^{(n)}_i-\demi W_i)} {G^{(n)}(i_0,j)\over G^{(n)}(i_0,i_0)}
\right)\nu_V^{W}(d\beta,d\gamma).
\end{align}
From (\ref{limit}), we have a.s.
\[
\lim_{n\to\infty} \tilde\beta^{(n)}_i=\tilde\beta_i:= \sum_{j\sim i} \demi W_{i,j} {G(i_0,j)\over G(i_0,i)}.
\]
Remark that in \eqref{cv-integrals}, the term $e^{-\sum_{i\in \Lambda} \ell_i(T) (\tilde\beta^{(n)}_i-\demi W_i)}$ is bounded since $\Lambda\cup\partial_+\Lambda$ is finite
and $T\le t_0$.
Using the uniform integrability of \(\frac{G^{(n)}(i_0,j)}{G^{(n)}(i_0,i_0)}\), Corollary~\ref{coro-ui-tilG}, we get, letting $n$ tend to $\infty$, that
\begin{align*}
&\E_{i_0}^{\operatorname{VRJP}, \ggg}\left( F((Z_t)_{t\le T})\right)
\\
=&
\int \sum_{ {j\in \Lambda\cup\partial^+\Lambda}} E^{\operatorname{MJP}}_{i_0}\left(\mathds{1}_{Z_T=j} F((Z_t)_{t\le T})
e^{-\sum_{i\in \Lambda} \demi \ell_i(T) ( \tilde\beta_i-\demi W_i)}
 {G(i_0,j)\over G(i_0,i_0)}\right)
\nu_V^{W}(d\beta,d\gamma)
\\
=&
\int
E^{\beta,\gamma,i_0}_{i_0}
\left(F((Z_t)_{t\le T})
\right)
\nu_V^{W}(d\beta,d\gamma)
\end{align*}
where \(E^{\beta,\gamma,i_0}_{i_0}\) is the expectation associated with the probability \(P^{\beta,\gamma,i_0}_{i_0}\) defined in Theorem~\ref{convergence}.
Since $\Lambda$ and $t_0$ can be chosen arbitrarily, the previous identity characterizes the law of $(Z(t))_{t\ge 0}$. This concludes the proof of Theorem~\ref{convergence}~\ref{convergence_iii}.
\begin{proof}[Proof of formula \eqref{RN-Gn}]
Consider the Markov jump process on the graph $\ggg_n$ with jump rates $\demi W^{(n)}$, denote by $P^{MJP,(n)}_{i_0}$ its law starting from $i_0$. At each vertex $i$, its waits an exponential random time with parameter $\demi W^{(n)}_i$, then jumps to $j\sim i$ with probability proportional to $W^{(n)}_{i,j}$.
On time interval $[0,t]$, 
the probability that it 
follows the discrete path $(\sigma_0=i_0,\sigma_1, \ldots, \sigma_n)$ and jumps at  times $0<s_1<\ldots <s_n<t$ has distribution 
\begin{align*}
&
\left(\prod_{k=0}^{n-1} {W^{(n)}_{\sigma_k,\sigma_{k+1}}\over W^{(n)}_{\sigma_k}}\right) \left(e^{-\demi W^{(n)}_{\sigma_n}(t-s_n)} \prod_{k=0}^{n-1} W^{(n)}_{\sigma_k}e^{-\sum_{k=0}^{n-1}\demi W^{(n)}_{\sigma_k} (s_{k+1}-s_k)}\right)ds_1\cdots ds_n
\\
&=
\left(\prod_{k=0}^{n-1} W^{(n)}_{\sigma_k,\sigma_{k+1}}\right)e^{-\demi\sum_{i\in \tilde V_n} W^{(n)}_i \ell_i((\sigma_k),(s_k))} ds_1\cdots ds_n,
\end{align*}
where $\ell_i((\sigma_k),(s_k))$ is the total time spent at position $i$ by the trajectory with discrete path $(\sigma_k)$ and jump times $(s_k)$. The same formula is true for
the Markov jump process with jump rates \eqref{jumping-n} with $\demi W^{(n)}_{i,j}$ replaced by $\demi W^{(n)}_{i,j} {G^{(n)}(i_0,j)\over G^{(n)}(i_0,i)}$. By cancellation of the ratios  ${G^{(n)}(i_0,j)\over G^{(n)}(i_0,i)}$ along the trajectory, it gives that on time interval $[0,t]$, the Radon-Nikodym derivative of the law of the Markov jump process with jump rates \eqref{jumping-n} starting at $i_0$ with respect to $P^{MJP,(n)}_{i_0}$ is
$$
M_t:=e^{-\sum_{i\in \tilde V_n} \ell_i(t)( \tilde\beta^{(n)}_i-\demi W^{(n)}_i)} {G^{(n)}(i_0,Z_t)\over G^{(n)}(i_0,i_0)}.
$$
Moreover, $M_t$ is a martingale and is bounded on finite time intervals. Since $T$ is a bounded stopping time, $T\le t_0$, we have \eqref{RN-Gn} for the stopping time $T$ since $W^{(n)}_{V_n,V_n}=W_{V_n,V_n}$.
\end{proof}

\subsection{Proof of Proposition~\ref{coro-escape-proba}, and iv) of Theorem~\ref{convergence}}
\label{sec:equiv-betw-psi=0}

\begin{proof}[Proof of Proposition~\ref{coro-escape-proba}]
 Recall notation of Section~\ref{ss_sum_paths} and identities~\eqref{eq-tilG-pathsum}. As \(n\mapsto \hat{G}^{(n)}(i,j)\) is increasing,
we have
\begin{align}\label{path-hatG}
\hat{G}(i,j)=\sum_{\sigma\in \mathcal{P}_{i,j}^{V}}\frac{W_{\sigma}}{(2\beta)_{\sigma}}.
\end{align}
By arguments similar to~(\ref{eq-factorize-path-sum}), we have
\[
\frac{\hat{G}(i_{0},i)}{\hat{G}(i_{0},i_{0})}=\sum_{\sigma\in \bar{\mathcal{P}}_{i,i_{0}}^{V}}\frac{W_{\sigma}}{(2\beta)_{\sigma}^{-}}.
\]
We recall that $(\tilde Z_n)_{n\in \N}$ denotes the discrete time process which describes successive jumps of the process $(Z_t)_{t\in \R_+}$. Clearly, $\{\tau^+_{i_0}<\infty\}=\{\exists n\ge 1, \hbox{ s.t. } \tilde Z_n=i_0\}$.
Therefore, if we denote \(\{(\tilde{Z}_{n})\sim \sigma\}=\{\tilde{Z}_{0}=\sigma_{0},\ldots,\tilde{Z}_{m}=\sigma_{m}\}\) with \(m=|\sigma|\), then for \(i\neq i_{0}\)
\begin{equation}
\label{eq-h(i)-neqi0}
\begin{aligned}
  h(i)&:=P_{i}^{\beta,\gamma,i_{0}}(\tau^+_{i_{0}}<\infty)=\sum_{\sigma\in \bar{\mathcal{P}}_{i,i_{0}}^{V}}P_{i}^{\beta,\gamma,i_{0}}((\tilde{Z}_{n})\sim \sigma)
  \\&
  =\sum_{\sigma\in \bar{\mathcal{P}}_{i,i_{0}}^{V}}\frac{W_{\sigma}}{(2\beta)_{\sigma}^{-}}\frac{G(i_{0},i_{0})}{G(i_{0},i)}=\frac{\hat{G}(i_{0},i)}{\hat{G}(i_{0},i_{0})}\cdot \frac{{G}(i_{0},i_{0})}{G(i_{0},i)}.
\end{aligned}
\end{equation}
It follows from \(G(i,j)=\hat{G}(i,j)+\frac{1}{2\gamma}\psi(i)\psi(j)\) that, for \(i\neq i_{0}\),
\begin{align*}
  P_{i}^{\beta,\gamma,i_{0}}(\tau^+_{i_{0}}=\infty)&=1-h(i)
  =\frac{\psi(i_{0})}{2\gamma}\frac{\hat{G}(i_{0},i_{0})\psi(i)-\hat{G}(i_{0},i)\psi(i_{0})}{\hat{G}(i_{0},i_{0})G(i_{0},i)}.
\end{align*}
Therefore,
\begin{align}
\nonumber
  P_{i_{0}}^{\beta,\gamma,i_{0}}(\tau_{i_{0}}^{+}=\infty)&=\sum_{j\sim i_{0}}\frac{W_{i_{0},j}G(i_{0},j)}{2 \tilde{\beta}_{i_{0}}G(i_{0},i_{0})}P_{j}^{\beta,\gamma,i_{0}}(\tau^+_{i_{0}}=\infty)\\
\label{expression-max-principal}  
&=\sum_{j\sim i_{0}}\frac{\psi(i_{0})W_{i_{0},j}}{4\gamma \tilde{\beta}_{i_{0}}}\frac{\hat{G}(i_{0},i_{0})\psi(j)-\hat{G}(i_{0},j)\psi(i_{0}) }{\hat{G}(i_{0},i_{0})G(i_{0},i_{0}) }.
\end{align}
By definition, for $n$ large enough, we have \(H \hat{G}^{(n)}(i_{0},\cdot)=\mathds{1}_{i_{0}}(\cdot)\). Taking the limit \(n\to \infty\),
 we have \(H \hat{G}(i_{0},\cdot)=\mathds{1}_{i_{0}}(\cdot)\). By (iii) of Theorem~\ref{main} (proved in section \ref{s_Proof-Thm2}),
 we have \(H \psi(\cdot)=0\), therefore,
\[\sum_{j\sim i_{0}}W_{i_{0},j}[\psi(j)\hat{G}(i_{0},i_{0})-\psi(i_{0})\hat{G}(i_{0},j)]=\psi(i_{0}),\]
hence
\( P_{i_{0}}^{\beta,\gamma,i_{0}}(\tau_{i_{0}}^{+}=\infty)=\frac{\psi(i_{0})^{2}}{4\gamma \tilde{\beta}_{i_{0}}\hat{G}(i_{0},i_{0})G(i_{0},i_{0})}.\)
\end{proof}
\begin{proof}[Proof of Theorem~\ref{convergence}, (iv)]
From Proposition~\ref{coro-escape-proba}, we see that \(P_{i_{0}}^{\beta,\gamma,i_{0}}(\tau_{i_{0}}^{+}=\infty)>0\) if and only if
\(\psi(i_0)>0\). Since the Markov jump process \(P_{i_{0}}^{\beta,\gamma,i_{0}}\) is irreducible (\(\ggg\) is connected), (iv) follows.
\end{proof}
\subsection{Ergodicity and the 0-1 law: proof of Proposition~\ref{0-1law} and~\ref{0-1law-ERRW}}
\begin{proof}[Proof of Proposition~\ref{0-1law}.]
From the expression of the Laplace transform of \(\beta\), c.f. Proposition~\ref{kolmogorov},
we see that under $\nu_V^W(d\beta)$, \((\beta_i)_{i\in V}\) is stationary for the action of \(\aaa\).

By 1-dependence, c.f. Proposition~\ref{kolmogorov}, it is also ergodic.
Indeed, assume that \((\tau_n)\in \aaa^\N\) is a sequence of automorphims such that \(d_\ggg(i_0, \tau_n(i_0))\to \infty\)
for some vertex \(i_0\). We prove that \((\tau_n)\) is mixing in the sense that for all \(A,B\in \sigma(\beta_i, i\in V)\)
\begin{equation}\label{mixing}
\lim_{n\to \infty} \nu_V^W (\tau_{n}^{-1}(B)\cap A)= \nu_V^W(A)\nu_V^W(B).
\end{equation}
Assume that \(V_1\subset V\)
is finite and that \(A,B\in \sigma(\beta_j, \;j\in V_1)\).
By 1-dependence, \(\tau^{-1}_n(B)\) is independent of \(A\) for \(n\) large enough. This implies that \eqref{mixing} is true for all $A,B$ in the algebra
$\ppp= \cup_{V_1 \hbox{ finite}}\sigma(\beta_j, \;j\in V_1)$.
Now, any measurable set in $\sigma\{\beta_j, \; j\in V\}$ can be approximated by an element in $\ppp$, i.e. for $\epsilon>0$ and $A,B$ in  $\sigma\{\beta_j, \; j\in V\}$ we can find $A_0, B_0\in \ppp$ such that
$\nu_V^W(A\Delta A_0)<\epsilon$ and $\nu_V^W(B\Delta B_0)<\epsilon$ where $\Delta$ is the symmetric difference (see e.g. \cite[Theorem~D, Section~13, page 56]{Halmos}). Hence, also $\nu_V^W(\tau_n^{-1}( B)\Delta \tau_n^{-1} (B_0))<\epsilon$ since $\tau_n$ is measure preserving. This easily imply \eqref{mixing} for all $A$ and $B$.
Finally, if $A$ is $\tau$-invariant, \eqref{mixing} implies $(\nu_V^W(A))^2=\nu_V^W(A)$, hence $\nu_V^W(A)$ equals $0$~or~$1$.

Let us prove stationarity and ergodicity of the random variables $(\hat G(i,j))_{i,j\in V}$. We also denote by $\tau$ the transformation on $\R^{V\times V}$ given by,
for $(M(i,j))\in \R^{V\times V}$, $\tau M(i,j)= M(\tau i, \tau j)$. Since the limit $\hat G_\beta (i,j)$ does not depend on the choice of the sequence $V_n$, a.s., we have that 
$\tau(\hat G_\beta)=\hat G_{\tau(\beta)}$, by choosing the approximating sequences $V_n$ and $\tau V_n$.
It implies that $(\hat G(i,j))$ is stationary. Moreover, if $A\in \bbb(\R^{V\times V})$ is $\tau$-invariant, then the set $\{\beta, \; \hat G_\beta \in A\}$ is a.s. $\tau$-invariant. Hence, $A$ has measure 0 or 1 under the law of $\hat G$.   The proof is similar for the random variables $(\psi(i))_{i\in V}$.

The event \(\{\psi(i)=0, \;\forall i\in V\}\) is clearly invariant by \(\aaa\), hence has probability 0 or 1. Together with (iv) of Theorem~\ref{convergence}
it concludes the proof of the proposition.
\end{proof}
\begin{proof}[Proof of Proposition~\ref{0-1law-ERRW}.]
The proof is similar to the proof of Proposition~\ref{0-1law}.
\end{proof}

\subsection{Proof of Theorem~\ref{main}: relation with spectral properties of the random schr\"o\-din\-ger operator}
\label{sec:aspect-rand-schr}

\begin{proof}[Proof of Theorem~\ref{main} (i)]\label{s_Proof-Thm2}
By Proposition~\ref{kolmogorov}, since $\nu_V^W$ is supported on $\ddd_V^W$, we have a.s.  that \(H_{V_n\times V_n}>0\) and passing to the limit, we get
\(H\ge 0\). Hence, \(\sigma(H)\subset [0,+\infty)\).
\end{proof}
\begin{proof}[Proof of Theorem~\ref{main} (ii)]
  As \(-\varepsilon\) is strictly outside the spectrum of \(H \) a.s., the equation \((H +\varepsilon)\hat{G}^{\varepsilon}=\Id\) has a unique finite solution, we can verify that
\(\sum_{\sigma\in \mathcal{P}^{V}_{i,j}}\frac{W_{\sigma}}{(2\beta+\varepsilon)_{\sigma}}\) is a solution to this equation. Now by \eqref{path-hatG} 
we have
  \[(H+\varepsilon)^{-1}(i,j)=\sum_{\sigma\in \mathcal{P}^{V}_{i,j}}\frac{W_{\sigma}}{(2\beta+\varepsilon)_{\sigma}}\leq \sum_{\sigma\in \mathcal{P}^{V}_{i,j}}\frac{W_{\sigma}}{(2\beta)_{\sigma}}=
  \hat{G}(i,j)<\infty.\]
Therefore, as \(\sum_{\sigma\in \mathcal{P}_{i,j}}\frac{W_{\sigma}}{(2\beta+\varepsilon)_\sigma}\) is increasing as \(\varepsilon\to 0\), it converges a.s.\ to \(\hat{G}(i,j)\). 
\end{proof}
\begin{proof}[Proof of Theorem~\ref{main}~(iii)]
  We have, for all \(i\in V_n\), $\beta\in \ddd_V^W$,
\(\psi^{(n)}_\beta(i)=\sum_{j\sim i}\frac{W_{i,j}}{2\beta_{i}}\psi_\beta^{(n)}(j).\)
As \(\psi^{(n)}(i)\) converges a.s.\ to \(\psi(i)\), the above equality holds in the limit, i.e., for all \(i\in V\), a.s.
\[\psi(i)=\sum_{j\sim i}\frac{W_{i,j}}{2\beta_{i}}\psi(j),\]
this exactly means \((H\psi)(i)=0\).
\end{proof}
\begin{proof}[Proof of Theorem~\ref{main}~(iv)]
By Fatou's Lemma, the limit \(\psi(i)\) satisfies \(\E_{\nu_V^W} (\psi(i))\leq 1\). By Markov inequality
\[\nu_V^W\left(\psi(i)\geq C\| i\|_\infty^{p}\right)\leq \frac{1}{C\| i\|_\infty^{p}}.\]
Let \(\partial B(0,n)\) be the sphere of radius \(n\) for $\| \cdot\|_\infty$, i.e. \(\partial B(0,n)=\{i\in \mathbb{Z}^{d},\; \| i \|_\infty=n\}\). When \(p> d\),
\begin{align*}
\sum_{i\in \Z^d, \; i\neq 0} \nu_V^W(\psi(i)\geq C\| i\|_\infty^{p}) &= \sum_{n\ge 1}  \sum_{i\in \partial B(0,n) }\nu_V^W(\psi(i)\geq C\| i\|_\infty^{p})
\\
&\leq \sum_{n\ge 1}  \sum_{i\in \partial B(0,n)}\frac{1}{C\| i\|_\infty^{p}}\\
&\le C' \sum_{n}\frac{n^{d-1}}{n^{p}}<\infty
\end{align*}
for some constant \(C'>0\).
By Borel-Cantelli lemma, a.s.\ only a finite number of \(i\) satisfies \(\psi(i)\geq C\Vert i\Vert_\infty^{p}\).
\end{proof}

\section{\(h\)-transforms}\label{ss_h-transform}
\begin{corollary}
\label{coro-doob-h}
Recall that \(\tau_{i_0}^+=\inf\{t\ge 0, \;\; Z_t=i_0, \; \exists s<t \hbox{ s.t. } Z_s\neq i_0\}\) is the first return time to \(i_0\) of
the process \((Z_t)_{t\ge 0}\).
\begin{enumerate}[label=(\roman*),leftmargin=*]
\item 
\label{Doob-i}
For almost all $\beta$ and $i_0\in V$, denote by \(\hat{P}_{i_{0}}^{\beta,i_{0}}\) the law of the Markov jump process with jump rate from \(i\) to \(j\)
\[
\begin{cases}
  \frac{1}{2}W_{i,j}\frac{\hat{G}(i_{0},j)}{\hat{G}(i_{0},i)}, & i\neq i_{0},\\
\tilde\beta_{i_0} \frac{W_{i_{0},j}\hat{G}(i_{0},j)}{\sum_{k\sim i_{0}}W_{i_{0},k}\hat{G}(i_{0},k)}, & i=i_{0},\ j\sim i_{0},
\end{cases}
\]
where as before \(\tilde\beta_{i_0}=\sum_{j\sim i_0} \demi W_{i_0,j} \frac{G(i_0,j)}{G(i_0,i_0)}\).
Then, for $\nu_V^W$-almost all $\beta$, for $\gamma>0$,
$$
{P}_{i_{0}}^{\beta,\gamma,i_{0}}\left((Z_t)_{t\le \tau_{i_0}^+}\in \cdot \; |\; \tau_{i_0}^+<\infty \right)
=
\hat{P}_{i_{0}}^{\beta,i_{0}}\left((Z_t)_{t\le \tau_{i_0}^+}\in \cdot \right).
$$
\item
\label{Doob-ii} 
The VRJP in exchangeable time scale, conditionally on \(\{\tau_{i_{0}}^{+}<\infty\}\) and up to its first return time to \(i_0\),
 is given by the following mixture:
\[\P_{i_{0}}^{\text{VRJP}}(\ (Z_t)_{t\le \tau^+_{i_0}}\in \cdot\ |\tau_{i_{0}}^{+}<\infty)=\int \hat{P}_{i_{0}}^{\beta,i_{0}}((Z_t)_{t\le \tau^+_{i_0}}\in \cdot) \frac{P_{i_{0}}^{\beta,\gamma,i_{0}}(\tau_{i_{0}}^{+}<\infty)}{\P_{i_{0}}^{\text{VRJP}}(\tau_{i_{0}}^{+}<\infty)} \nu_V^{W}(d\beta,d\gamma).\]
\item 
\label{Doob-iii}  Let $i_0\in V$. A.s. on the event $\{\psi(i)>0, \; \forall i\in V\}$, \(\check{G}(i_0,j):=\hat{G}(i_0,i_0)\psi(j)-\hat{G}(i_0,j)\psi(i_0)\) is positive for all $j\neq i_0$,
and we define \(\check{P}_{i_{0}}^{\beta,\gamma,i_{0}}\) as the law of the Markov jump process starting at $i_0$ and with jump rate from \(i\) to \(j\)
\begin{align*}
\begin{cases}
\frac{1}{2}W_{i,j}\frac{\check{G}(i_{0},j)}{\check{G}(i_{0},i)}, & i\neq i_{0},\ j\neq i_{0}, \\
\tilde\beta_{i_0} \frac{W_{i_{0},j}\check{G}(i_{0},j)}{\sum_{k\sim i_{0}}W_{i_{0},k}\check{G}(i_{0},k)}, & i=i_{0},\ j\sim i_{0},\\
0, & i\sim i_{0}\ j=i_{0}.
\end{cases}
\end{align*}
Then, $\nu_V^W$-almost surely on this event, for $\gamma>0$, 
$$
{P}_{i_{0}}^{\beta,\gamma,i_{0}}\left((Z_t)_{t\ge 0}\in \cdot \; |\; \tau_{i_0}^+=\infty \right)
=
\check{P}_{i_{0}}^{\beta,\gamma,i_{0}} \left((Z_t)_{t\ge 0}\in \cdot \right).
$$
\item
\label{Doob-iv} 
The VRJP in exchangeable time scale, conditionally on the event \(\{\tau_{i_{0}}^{+}=\infty\}\), is a mixture of Markov jump processes with mixing law
\begin{align*}
  \P_{i_{0}}^{\text{VRJP}}(\ \cdot \ |\tau_{i_{0}}^{+}=\infty)=\int \check{P}_{i_{0}}^{\beta,\gamma,i_{0}}(\cdot)\frac{P_{i_{0}}^{\beta,\gamma,i_{0}}(\tau_{i_{0}}^{+}=\infty)}{\P_{i_{0}}^{\text{VRJP}}(\tau_{i_{0}}^{+}=\infty)} \nu_{V}^{W}(d\beta,d\gamma).
\end{align*}
\end{enumerate}
\end{corollary}
\noindent
\begin{remark}
  Note that in the case (i), the conditional jump rates do not depend on \(\gamma\).
\end{remark}

\begin{proof}[Proof of Corollary~\ref{coro-doob-h}]
\begin{enumerate}[label=(\roman*),wide, labelwidth=!, labelindent=0pt]
\item
Recall from~(\ref{eq-h(i)-neqi0}) that for \(i\neq i_{0}\)
\begin{align*}
  h(i)&=P_{i}^{\beta,\gamma,i_{0}}(\tau^+_{i_{0}}<\infty)=\frac{\hat{G}(i_{0},i)G(i_{0},i_{0})}{\hat{G}(i_{0},i_{0})G(i_{0},i)}.
\end{align*}
For \(i\neq i_{0}\), we have
\[
P_{i_0}^{\beta,\gamma,i_{0}}(\ X_{t+dt}=j\ |X_t=i, \ t\le \tau_{i_{0}}^{+}<\infty)\sim {h(j)\over h(i)} P_{i_0}^{\beta,\gamma,i_{0}}(\ X_{t+dt}=j\ |X_t=i).
\]
Hence,  the jump rate of \(P_{i_0}^{\beta,\gamma,i_{0}}(\ \cdot\ |\tau_{i_{0}}^{+}<\infty)\), up to time \(\tau_{i_0}^+\), from \(i\) to \(j\) is
\begin{align*}
 \frac{1}{2}W_{i,j}\frac{G(i_{0},j)}{G(i_{0},i)}\frac{h(j)}{h(i)}&=\frac{1}{2}W_{i,j}\frac{\hat{G}(i_{0},j)}{\hat{G}(i_{0},i)}.
\end{align*}
The jump rate of \(P_{i_0}^{\beta,\gamma,i_{0}}(\ \cdot\ |\tau_{i_{0}}^{+}<\infty)\), up to time \(\tau_{i_0}^+\),  from \(i_{0}\) to \(j\) is given by
\begin{eqnarray*}
\demi W_{i_0,j}{G(i_0,j)\over G(i_0,i_0)}
{h(j)\over P_{i_0}^{\beta,\gamma,i_{0}}(\tau_{i_{0}}^{+}<\infty)}=\tilde \beta_{i_0}  \frac{W_{i_{0},j}\hat{G}(i_{0},j)}{\sum_{k\sim i_{0}}W_{i_{0},k}\hat{G}(i_{0},k)},
\end{eqnarray*}
where \(\tilde \beta_{i_0}=\sum_{l\sim i_0} \demi W_{i_0,l}{G(i_0,l)\over G(i_0,i_0)} \).
\item
From~\ref{Doob-i}, we have
   \begin{align*}
 &   \P_{i_{0}}^{\text{VRJP}}(\ (Z_t)_{t\le \tau^+_{i_0}}\in \cdot\ |\tau_{i_{0}}^{+}<\infty)\\
&=\int P_{i_{0}}^{\beta,\gamma,i_{0}}(\ (Z_t)_{t\le \tau^+_{i_0}}\in \cdot\ | \tau_{i_{0}}^{+}<\infty) \frac{P_{i_{0}}^{\beta,\gamma,i_{0}}(\tau_{i_{0}}^{+}<\infty)}{\P_{i_{0}}^{\text{VRJP}}(\tau_{i_{0}}^{+}<\infty)}\nu_V^{W}(d\beta,d\gamma)
\\
&=\int \hat{P}_{i_{0}}^{\beta,i_{0}}((Z_t)_{t\le \tau^+_{i_0}}\in \cdot) \frac{P_{i_{0}}^{\beta,\gamma,i_{0}}(\tau_{i_{0}}^{+}<\infty) }{\P_{i_{0}}^{\operatorname{VRJP}}(\tau_{i_{0}}^{+}<\infty)}\nu_V^{W}(d\beta,d\gamma).
  \end{align*}
\item
 The fact that \(\check{G}(i_0,j)\) is positive for $j\neq i_0$ when $\psi>0$ is a consequence of Proposition~\ref{coro-escape-proba}, and Theorem~\ref{convergence}~\ref{iv}.
Similarly to~\ref{Doob-i}, for \(i\neq i_0\), we have
\[
P_{i_0}^{\beta,\gamma,i_{0}}(\ X_{t+dt}=j\ |X_t=i, \  \tau_{i_{0}}^+=\infty)\sim {1-h(j)\over 1-h(i)} P^{\beta,\gamma,i_{0}}(\ X_{t+dt}=j\ |X_t=i).
\]
Hence,  the jump rate of \(P_{i_0}^{\beta,\gamma,i_{0}}(\ \cdot\ |\tau_{i_{0}}^{+}=\infty)\), from \(i\neq i_0\) to \(j\) is
\begin{align*}
 \frac{1}{2}W_{i,j}\frac{G(i_{0},j)}{G(i_{0},i)}\frac{1-h(j)}{1-h(i)}=
 \frac{1}{2}W_{i,j}\frac{\hat{G}(i_{0},i_{0})\psi(j)-\hat{G}(i_{0},j)\psi(i_{0})}{\hat{G}(i_{0},i_{0})\psi(i)-\hat{G}(i_{0},i)\psi(i_{0})}=\frac{1}{2}W_{i,j}\frac{\check{G}(i_{0},j)}{\check{G}(i_{0},i)}.
\end{align*}
The jump rate of \(P_{i_0}^{\beta,\gamma,i_{0}}(\ \cdot\ |\tau_{i_{0}}^{+}=\infty)\),  from \(i_{0}\) to \(j\) is given by
\begin{eqnarray*}
\demi W_{i_0,j}{G(i_0,j)\over G(i_0,i_0)}
{1-h(j)\over P_{i_0}^{\beta,\gamma,i_{0}}(\tau_{i_{0}}^{+}=\infty)}=\tilde \beta_{i_0}  \frac{W_{i_{0},j}\check{G}(i_{0},j)}{\sum_{k\sim i_{0}}W_{i_{0},k}\check{G}(i_{0},k)},
\end{eqnarray*}
where \(\tilde \beta_{i_0}=\sum_{l\sim i_0} \demi W_{i_0,l}{G(i_0,l)\over G(i_0,i_0)} \).
\item follows easily from~\ref{Doob-iii} in the same way as in~\ref{Doob-ii}.
\end{enumerate}
\end{proof}
\section{Proof of recurrence of 2-dimensional ERRW: Theorem~\ref{rec-ERRW}}\label{ss_proof-rec}
Consider the square grid \(\ggg=(\Z^2, E)\) with constant edge weight \(a_e=a>0\).
From \eqref{rep-ERRW} in Section~\ref{results-ERRW}, we know that the ERRW on \(\Z^2\) is a mixture of reversible Markov chains with conductances
\begin{equation}\label{x_rec}
x_{i,j}=W_{i,j} G(0,i)G(0,j)
\end{equation}
where $(W,\beta,\gamma)$ are distributed according $\tilde\nu_V^a(dW,d\beta,d\gamma)$.
We will use \cite{merklbounding} to prove the following lemma.
\begin{lemma}
\label{estimee-xij}
There exists \(c(a)>0\) and \(\xi(a)>0\), depending only on \(a\), such that for $\ell\in \Z^2$,
\begin{equation}
\label{poly-decrease}
\E_{\tilde\nu_V^a}\left( \left({x_\ell \over x_0}\right)^{{1\over 4}} \right)\le c(a) \| \ell\|_{\infty}^{-\xi(a)},
\end{equation}
where \(x_i=\sum_{j\sim i} x_{i,j}\) and $(x_{i,j})$ is defined in \eqref{x_rec}.
\end{lemma}
\begin{proof}
This estimate follows from Theorem~2.8 of~\cite{merklbounding} (it can also be deduced from \cite[Lemma 2.5]{merkl2009recurrence})
which gives a similar estimate on finite boxes. In~\cite[Theorem~2.8]{merklbounding}, the estimate is stated for
a periodic torus, but it is clear in the proof that the only necessary ingredient is that the finite graph with conductances is invariant by the reflection exchanging
0 and \(\ell\).
For this reason we choose the approximating sequence
 \(V_n=B({\ell\over 2}, n)\cap \Z^2\), where \(B({\ell\over 2}, n)\)  is the ball with center \(\ell/2\) and radius \(n\). Consider as in Section~\ref{ss_Kolmogorov}
the graph
\(
\ggg_{n}=(\tilde V_n=V_n\cup\{\delta_n\}, E_n),
\)
and the associated weights \((a^{(n)}_e)_{e\in E_n}\) obtained by restriction of \((\ggg,(a_e)_{e\in E})\) to \(V_n\) with wired boundary condition.
Clearly, central symmetry  with respect to \({\ell\over 2}\) (mapping \(\delta_{n}\) to itself) leaves  \((\ggg_{n}, a^{(n)})\) invariant and exchanges
\(0\) and \(\ell\).

With the coupling defined in Section~\ref{ss_G-et-tilde}, we define for \(i\sim j\), \(i,j\) in \(\tilde V_n\),
\[
x^{(n)}_{i,j}=W^{(n)}_{i,j} G^{(n)}(0,i)G^{(n)}(0,j).
\]
where \(W^{(n)}\) is obtained by restriction with wired boundary condition from \(W\).
By additivity of Gamma random variables, under $\tilde\nu_V^a$, \((W^{(n)}_e)_{e\in E_n}\)
are independent Gamma random variables with parameters \((a^{(n)}_e)_{e\in E_n}\).
Hence, the ERRW on \(\ggg_n\), with initial weights \(a^{(n)}\), starting from 0, is a mixture of reversible Markov chains with conductances \((x^{(n)}_e)_{e\in E_n}\).

From Theorem~\ref{convergence}, with the coupling defined in Section~\ref{ss_G-et-tilde}, we have that for all \(i,j\in \Z^2\), \(i\sim j\), a.s.
\begin{eqnarray}\label{xn-x}
\lim_{n\to \infty} x^{(n)}_{i,j}=x_{i,j}.
\end{eqnarray}
The proof of Theorem~2.8 of~\cite{merklbounding},
can be readily adapted to prove the following estimate. 
\begin{lemma}\label{estimée_MR}
There exists \(c(a)>0\) and \(\xi(a)>0\) only depending on \(a\) such that
for $\ell\in \Z^2$ and \(n\) large enough,
\[
\E_{\tilde\nu_V^a}\left( \left({x_\ell^{(n)}\over x_0^{(n)}}\right)^{{1\over 4}} \right)\le c(a)\| \ell\|_{\infty}^{-\xi(a)},
\]
where, with the usual convention, $x_{\ell}^{(n)}=\sum_{j\sim \ell} x_{\ell,j}^{(n)}$.
\end{lemma}
Then, Lemma~\ref{estimee-xij} follows from Lemma~\ref{estimée_MR}, \eqref{xn-x} and Fatou's lemma.
\end{proof}
We now deduce recurrence of the ERRW from the estimate \eqref{poly-decrease} and from Theorem \ref{convergence} and Proposition \ref{0-1law-ERRW}.
We have, for \(\ell\neq 0\),
\[
x_\ell=\sum_{j\sim \ell} W_{\ell,j} G(0,\ell)G(0,j)= 2\beta_\ell G(0,\ell)^2\ge {\beta_\ell\over 2\gamma^2} \psi(0)^2 \psi(\ell)^2.
\]
Similarly,
\[
x_0=\sum_{j\sim 0} W_{0,j} G(0,0)G(0,j)= G(0,0)(2\beta_0 G(0,0)-1).
\]
Hence,
\begin{eqnarray}\label{xl_x0}
{x_\ell\over x_0}\ge {\psi(0)^2\over 2\gamma^2 G(0,0)(2\beta_0 G(0,0)-1)} \beta_\ell\psi(\ell)^2.
\end{eqnarray}
Assume the ERRW is transient. By Proposition~\ref{0-1law-ERRW} it implies that, a.s.,  \(\psi(i)>0\) for all \(i\).
Choose first \(\eta>0\) such that
\[
\tilde\nu_V^a\left(
{\psi(0)^2\over 2\gamma^2 G(0,0)(2\beta_0 G(0,0)-1)} \le \eta\right)\le \demi.
\]
For all \(\epsilon>0\), we have by (\ref{poly-decrease})
\begin{eqnarray}\label{poly-epsilon}
\tilde\nu_V^a\left( {x_\ell \over x_0}\ge \epsilon \right)\le {1\over \epsilon^{{1\over 4}}} c(a)\| \ell\|_{\infty}^{-\xi(a)}.
\end{eqnarray}
On the other hand, we have by (\ref{xl_x0})
\begin{eqnarray}
\nonumber
\tilde\nu_V^a\left( {x_\ell \over x_0}\ge \epsilon \right)&\ge&
\tilde\nu_V^a\left(
{\psi(0)^2\over 2\gamma^2 G(0,0)(2\beta_0 G(0,0)-1)} >\eta, \;\; \beta_{\ell}\psi(\ell)^2> {\epsilon\over \eta} \right)
\\
\nonumber
&=&
1-
\tilde\nu_V^a\left(
\left\{{\psi(0)^2\over 2\gamma^2 G(0,0)(2\beta_0 G(0,0)-1)} \le\eta\right\}\cup\left\{\beta_{\ell}\psi(\ell)^2\le {\epsilon\over \eta}\right\} \right)
\\
\label{xl_psil}
&\ge&
 \demi- \tilde\nu_V^a \left(
\beta_{\ell}\psi(\ell)^2\le {\epsilon\over \eta}\right).
\end{eqnarray}
By Proposition~\ref{0-1law-ERRW}, \(\beta_\ell\psi(\ell)^2\) is stationary with respect to translations. Together with \eqref{xl_psil} and \eqref{poly-epsilon}, it implies that
\[
\tilde\nu_V^a\left(
\beta_0\psi(0)^2\le {\epsilon\over \eta}\right)=
\tilde\nu_V^a\left(
\beta_{\ell}\psi(\ell)^2\le {\epsilon\over \eta}\right)
\ge \demi-{1\over \epsilon^{{1\over 4}}} c(a)\| \ell\|_{\infty}^{-\xi(a)}.
\]
By sending \(\ell \) to infinity, we get
\( \tilde\nu_V^a \left(
\beta_0\psi(0)^2\le {\epsilon\over \eta}\right)\ge \demi\).
Letting $\epsilon\to 0$, this is incompatible with \(\psi(0)>0\) a.s., hence with transience of ERRW.

\section{Proof of Functional central limit theorems for the VRJP and the ERRW: Theorem~\ref{thm-clt} and~\ref{FCLT-ERRW}}\label{ss_clt}
\begin{proof}[Proof of Theorem~\ref{thm-clt} and Theorem~\ref{FCLT-ERRW}]
Let us start by the VRJP on $\Z^d$, $d\ge 3$, with constant weights \(W_{i,j}=W\). Assume that the VRJP is transient.

Recall that \((X_n)_{n\in \N}\) is the canonical discrete process on \((\Z^d)^\N\).
For $\nu_V^W$-almost all \(\beta\), let us define \(\tilde P_x^{\psi}\) to be the law of the reversible
Markov chain, starting at $x$, with conductances \(W_{i,j} \psi(i)\psi(j)\), i.e.\ with transition probabilities
\[\tilde P_x^{\psi}(X_{n+1}=j|X_{n}=i)=
\frac{W_{i,j}\psi(j)}{\sum_{l\sim i} W_{i,l} \psi(l)}.\]
Denote by \(\tilde P_x^{\beta,\gamma,0}\) the law of the underlying discrete time process associated with the Markov jump process \(P_x^{\beta,\gamma,0}\), so that
for \(i\sim j\)
\[\tilde P_x^{\beta,\gamma,0}(X_{n+1}=j|X_{n}=i)=
\frac{W_{i,j}G(0,j)}{\sum_{l\sim i} W_{i,l} G(0,l)}.\]
As \(\psi\) is a generalized eigenfunction of \(H_{\beta}\), for any \(i\in V\),
\[
\beta_i=\sum_{j\sim i}\frac{1}{2}W_{i,j}\frac{\psi(j)}{\psi(i)}.
\]
It then follows by Proposition \ref{fini_green} that, for \(i\neq 0\),
\begin{align*}
  h^\psi(i)&:=\tilde P_{i}^{\psi}(\tau^+_{0}<\infty)=\sum_{\sigma\in \bar{\mathcal{P}}_{i,0}^{V}}\tilde P_{i}^{\psi}(X_{n}\sim \sigma)
  =\sum_{\sigma\in \bar{\mathcal{P}}_{i,0}^{V}}\frac{W_{\sigma}}{(2\beta)_{\sigma}^{-}}\frac{\psi(0)}{\psi(i)}=\frac{\hat{G}(0,i)}{\hat{G}(0,0)}\frac{\psi(0)}{\psi(i)}.
\end{align*}
(recall that \(\bar{\mathcal{P}}_{i,0}^{V}\) is defined in Section~\ref{ss_sum_paths}.)
Consider the Markov chain \(\tilde P_{0}^{\psi}(\ \cdot\ |\tau_{0}^{+}=\infty)\) (Doob's \((1-h^\psi)\)-transform).
By similar computation as in the proof of Proposition~\ref{coro-doob-h}, we have that
the transition probability of \(\tilde P_{0}^{\psi}(\ \cdot\ |\tau_{0}^{+}=\infty)\) from \(i\) to \(j\) is
\begin{align*}
  \frac{W_{i,j}\psi(j)(1-h^\psi(j))}{\sum_{l\sim i} W_{i,l}\psi(l)(1-h^\psi(l))}=\frac{W_{i,j}\check{G}(0,j)}{\sum_{l\sim i}W_{i,l}\check{G}(0,l)}
\end{align*}
for \(j\neq 0\),
and \(0\) when \(j=0\).
Therefore, we see that the transition probabilities of \(\tilde P_{0}^{\psi}(\ \cdot \ |\tau_{0}^{+}=\infty)\) are the same as those of
\(\tilde{P}_{0}^{\beta,\gamma,0}(\ \cdot\ \, |\, \tau_{0}^{+}=\infty )\), c.f. iii) of Proposition~\ref{coro-doob-h}. Moreover, if we denote
\[
\xi_{0}=\sup\{n;\ X_{n}=0\},
\]
then, by strong Markov property
\[\tilde P^{\psi}_{0}(X_{n}\in \cdot|\tau_{0}^{+}=\infty)=\tilde P_{0}^{\psi}((X\circ \theta_{\xi_{0}})_{n}\in \cdot)\]
\[\tilde P^{\beta,\gamma,0}_{0}(X_{n}\in \cdot|\tau_{0}^{+}=\infty)=\tilde P_{0}^{\beta,\gamma,0}((X\circ \theta_{\xi_{0}})_{n}\in\cdot)\]
where \(\theta_{n}\) is the shift in time by \(n\).
It follows that \((X\circ \theta_{\xi_{0}})_{n}\) has the same law under \(\tilde P^{\psi}_{0}\) and under \(\tilde P^{\beta,\gamma,0}_{0}\).

Remark also, from Proposition~\ref{0-1law}, that \(
W_{i,j}\psi(i)\psi(j)\) are stationary and ergodic conductances under $\nu_V^W(d\beta)$.
We can thus apply Theorem~4.5 and Theorem4.6 of~\cite{de1989invariance}.
In order to have a functional central limit theorem we need to show that, c.f. Theorem~4.5 of~\cite{de1989invariance},
\begin{eqnarray}\label{E-conductances}
\E_{\nu_V^W}(W_{i,j}\psi(i)\psi(j))<\infty.
\end{eqnarray}
In order to show that it has non-degenerate asymptotic covariance we need to show that, c.f. Theorem~4.6 and identity~(4.20) of~\cite{de1989invariance},
\begin{eqnarray}\label{E-conductances-inverse}
\E_{\nu_V^W}\left(\frac{1}{W_{i,j}\psi(i)\psi(j)}\right)<\infty.
\end{eqnarray}
By invariance of the law of the conductances by symmetries of \(\Z^d\), we know that the limit diffusion matrix is of the form
\(\sigma^2 \Id\).

The same reasoning works in the case of the ERRW with constant weights \(a_{i,j}=a\): in this case \((W_{i,j})\) are i.i.d., but as shown in Proposition~\ref{0-1law-ERRW}, \(W_{i,j} \psi(i)\psi(j)\) is also
stationary and ergodic under \(\tilde\nu_V^{a}(dW,d\beta)\).

Estimates (\ref{E-conductances}) and (\ref{E-conductances-inverse})
are provided by \cite{disertori2010quasi} in the VRJP case, and by \cite{disertori2014transience} in the ERRW case.
This is summarized in the following lemma.
\begin{lemma}\label{estimates-psi}
(i) (VRJP case) Consider the VRJP on \(\Z^d\), for \(d\ge 3\), with constant weights \(W_{i j}=W\)
There exists \(0<\lambda_2 <\infty\) such that for \(W>\lambda_2\), the VRJP is transient and such that
(\ref{E-conductances}), (\ref{E-conductances-inverse}) are true under \(\nu_V^W(d\beta)\).

(ii) (ERRW case) Consider the ERRW on \(\Z^d\), for \(d\ge 3\), with constant weights \(a_{i j}=a\)
There exists \(0<\tilde \lambda_2 <\infty\) such that for \(a>\tilde\lambda_2\), the ERRW is transient and
(\ref{E-conductances}), (\ref{E-conductances-inverse}) are true under \(\tilde\nu_V^a(dW,d\beta)\).
\end{lemma}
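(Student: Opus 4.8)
The plan is to split the lemma into its three components --- transience, the moment bound \eqref{E-conductances}, and the inverse moment bound \eqref{E-conductances-inverse} --- and to reduce each to an estimate available in \cite{disertori2010quasi} (VRJP case) or \cite{disertori2014transience} (ERRW case). Transience for $W$ (resp.\ $a$) large enough and $d\ge 3$ is exactly the content of these references; combined with the $0$--$1$ law of Proposition~\ref{0-1law} (resp.\ Proposition~\ref{0-1law-ERRW}) it forces $\psi(i)>0$ a.s.\ for every $i$, so that the conductances $W_{i,j}\psi(i)\psi(j)$ are a.s.\ positive, finite, stationary and ergodic, which is the setting needed to invoke \cite{de1989invariance}.

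For \eqref{E-conductances} I would avoid estimating $\psi$ directly and instead exploit the quadratic variation identity of Theorem~\ref{convergence}(ii). Since $\psi^{(n)}(i)\psi^{(n)}(j)-\hat G^{(n)}(i,j)$ is a martingale, and $\psi^{(m)}(i)\equiv 1$, $\hat G^{(m)}(i,i)=0$ for $m$ small enough that $i\notin V_m$, one obtains $\E(\psi^{(n)}(i)^2)=1+\E(\hat G^{(n)}(i,i))$, which by monotone convergence tends to $1+\E(\hat G(i,i))$. Hence, as soon as $\E(\hat G(i,i))<\infty$, the martingale $\psi^{(n)}(i)$ is bounded in $L^2$, converges to $\psi(i)$ in $L^2$, and
\[\E(\psi(i)\psi(j))=1+\E(\hat G(i,j))\le 1+\E(\hat G(i,i)),\]
the last inequality by Cauchy--Schwarz and stationarity. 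Thus \eqref{E-conductances} reduces to the finiteness of the averaged diagonal Green function $\E(\hat G(i,i))=\E\big((H_\beta)^{-1}(i,i)\big)$, which in the delocalized (weak disorder) regime $d\ge 3$ is precisely one of the upper bounds established in \cite{disertori2010quasi}; in the ERRW case one integrates the identity over the Gamma weights $W$ and uses the corresponding conductance estimate of \cite{disertori2014transience}.

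For \eqref{E-conductances-inverse} I would first reduce to a single-site negative moment. By Cauchy--Schwarz and stationarity of $\psi$,
\[\E\!\left(\frac{1}{\psi(i)\psi(j)}\right)\le \sqrt{\E\!\left(\frac{1}{\psi(i)^2}\right)\E\!\left(\frac{1}{\psi(j)^2}\right)}=\E\!\left(\frac{1}{\psi(0)^2}\right),\]
and then by Fatou $\E(1/\psi(0)^2)\le \liminf_n \E\big(e^{-2u^{(n)}(\delta_n,0)}\big)$, since $\psi^{(n)}(0)=e^{u^{(n)}(\delta_n,0)}$. So it suffices to establish a lower-tail control of $\psi(0)$, i.e.\ a negative exponential moment of the field $u^{(n)}(\delta_n,0)$ bounded uniformly in $n$. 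This is the main obstacle: whereas \eqref{E-conductances} only needs an \emph{upper} bound on the Green function, \eqref{E-conductances-inverse} demands a quantitative \emph{lower} bound on $\psi$, the genuinely hard delocalization estimate. Note that the ratio bounds of Proposition~\ref{estimate_disertori} alone only control $\psi(i)/\psi(j)$ for nearby sites and do not pin down the absolute scale of $\psi$; it is exactly this scale that the uniform negative-moment estimate of \cite{disertori2010quasi} (resp.\ \cite{disertori2014transience}) provides.

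Finally, in the ERRW case the whole scheme carries over after integration over the independent Gamma conductances: by Proposition~\ref{indep-ERRW} the field $\beta$ stays $1$-dependent and by Proposition~\ref{0-1law-ERRW} the conductances $W_{i,j}\psi(i)\psi(j)$ remain stationary ergodic under $\tilde\nu_V^{a}$. The only extra bookkeeping is that, for \eqref{E-conductances-inverse}, one further applies Cauchy--Schwarz to separate the negative moments of the Gamma weights $W$ (finite once $a$ is taken large enough) from the field estimates of \cite{disertori2014transience}; choosing $\tilde\lambda_2$ large makes all these moments finite simultaneously.
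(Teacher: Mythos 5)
Your reduction of \eqref{E-conductances-inverse} to a uniform negative exponential moment of \(u^{(n)}(\delta_n,0)\) is the right crux and matches the paper's strategy, but both halves of your proposal defer the key estimate to the references in a form the references do not contain, and this is where the genuine gap lies. For \eqref{E-conductances}, your quadratic-variation route (telescoping \eqref{eq-mart-crochet} to get \(\E(\psi^{(n)}(i)^2)=1+\E(\hat G^{(n)}(i,i))\), hence reducing to \(\E(\hat G(i,i))<\infty\)) is algebraically correct, but the claim that \(\E(\hat G(i,i))<\infty\) is ``precisely one of the upper bounds established in \cite{disertori2010quasi}'' is not: the field \(\beta\) and the operator \(H_\beta\), hence \(\hat G\), postdate that paper and appear nowhere in it. Worse, the paper explicitly lists the \(L^2\)-boundedness of \(\psi^{(n)}(i)\) --- which your own identity shows is \emph{equivalent} to \(\E(\hat G(i,i))<\infty\) --- as an open question in the general transient regime. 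At weak disorder the only available derivation is through uniform moment bounds on \(\psi^{(n)}(i)=e^{u^{(n)}(\delta_n,i)}\), i.e.\ exactly the estimate on \(\psi\) you say you are avoiding; so your route for \eqref{E-conductances} is circular-adjacent rather than a genuine shortcut, and it buys nothing over the paper's direct bound \(\E\bigl((\psi^{(n)}(i))^{\pm4}\bigr)\le C_\pm\) followed by Fatou.

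The second, related gap is that \cite{disertori2010quasi} and \cite{disertori2014transience} only control \emph{increments} \(u(x)-u(y)\) of a field rooted at an interior vertex of a box, not the quantity \(u^{(n)}(\delta_n,0)\) on the wired graph \(\ggg_n\) rooted at \(\delta_n\); note that \(\operatorname{d}_{\mathcal{G}}(\delta_n,0)\) is not the issue, but the wired geometry and the rooting are. The paper bridges this with a pinning argument that your sketch omits: it picks a boundary vertex \(y_n=(-n,0,\ldots,0)\) with \(y_n\sim\delta_n\), controls \(u^{(n)}(\delta_n,y_n)\) by the nearest-neighbour estimate (Lemma 7 of \cite{disertori2014transience}, the analogue of Proposition~\ref{estimate_disertori}, available since the root is adjacent), then transports the scale to any fixed \(i\) by the distance-uniform weak-disorder bound \(\E\bigl(\cosh^8(u^{(n)}(\delta_n,i)-u^{(n)}(\delta_n,y_n))\bigr)\le 2\) (Theorem 4 of \cite{disertori2014transience}, resp.\ Theorem 1 of \cite{disertori2010quasi}), combining the two by Cauchy--Schwarz; since \(\cosh\) dominates both \(e^{u}\) and \(e^{-u}\), this yields the positive \emph{and} negative moments of \(\psi^{(n)}(i)\) simultaneously, and \(\E(\psi(0)^{-4})<\infty\) even gives \(\psi>0\) a.s., hence transience, as a by-product. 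Finally, the paper must argue (``an attentive reading of the proof shows\ldots'') that the estimates of the references, stated for boxes rooted at an interior point, remain valid on \(\ggg_n\) rooted at \(\delta_n\) and when one endpoint sits on the boundary; this non-off-the-shelf verification, together with the pinning step, is the actual content of the lemma's proof and is absent from your proposal.
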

The proof of that lemma is given below. We first apply it to prove the functional central limit theorem.
Consider the VRJP case. 
Assume that the condition of the lemma is satisfied. Define
\[
X^{(n)}_t=\frac{X_{\lfloor nt\rfloor}}{\sqrt n}.
\]
From~\cite{de1989invariance}, we know that there exists \(0<\sigma^2<\infty\) such that for all bounded Lipschitz function
\(F\) for the Skorokhod topology, for all \(\epsilon>0\), for all \(0<T<\infty\),
\begin{eqnarray}\label{conv-Q}
\lim_{n\to \infty}
{Q^*}\left( \left\vert \tilde E_0^\psi(F((X^{(n)}_{0\le t\le T}))-\E(F((B_{0\le t\le T}))\right\vert\ge \epsilon\right)=0,
\end{eqnarray}
where \(B_t\) is a \(d\)-dimensional Brownian motion with covariance \(\sigma^2\hbox{Id}\), and where
\(Q^*\) is the invariant measure for the processes viewed from the particle
\[
Q^*(d\beta)={\sum_{j\sim 0} W_{0,j} \psi(0)\psi(j)\over \E_{\nu_V^W}(\sum_{j\sim 0} W_{0,j} \psi(0)\psi(j))} \cdot \nu_V^W(d\beta).
\]
It is clear, since \(Q^*\) and \(\nu_V^W\) are equivalent probability distributions that (\ref{conv-Q}) is also true when
\({Q^*}\) is replaced by \({\nu_V^W}\).
This implies an annealed functional central limit theorem for the process \((X_n)\) under the annealed law \(\E_{\nu_V^W}\left(\tilde P_0^\psi(\cdot)\right)\):
\begin{eqnarray}
\label{conv-Q-bis}
\lim_{n\to \infty}
\left\vert \E_{\nu_V^W}\left(\tilde E_0^\psi(F((X^{(n)}_{0\le t\le T})\right)-\E\left(F((B_{0\le t\le T})\right)\right\vert=0.
\end{eqnarray}
Let \(\Upsilon^{(n)}_{t}:=\frac{1}{\sqrt{n}}(X\circ \theta_{\xi_{0}})_{[nt]}\).
Denote \(d^{\circ}\) the Skorohod metric on \(D([0,\infty),\mathbb{R}^{d})\),  the space of càdlag functions \(f:[0,\infty)\to \mathbb{R}^{d}\).
As
\[|{X}^{(n)}_{t}-\Upsilon^{(n)}_{t}|=\frac{1}{\sqrt{n}}|X_{[nt]}-X_{[nt+\xi_{0}]}|\leq \frac{|\xi_{0}|}{\sqrt{n}}\xrightarrow[n\to\infty]{} 0,\]
 we have
\begin{eqnarray}\label{estimate-Ups}
d^{\circ}(X^{(n)},\Upsilon^{(n)})\to 0.
\end{eqnarray}
Recall that \(F\) is a bounded Lipschitz function for the Skorohod topology, therefore,
\[|F(X_{t}^{(n)})-F(\Upsilon^{(n)}_{t})|\to 0\]
and~\eqref{conv-Q-bis} is valid for \(X^{(n)}\) replaced by \(\Upsilon^{(n)}\).
But  \(\Upsilon^{(n)}\) has the same law under \(\tilde P_0^\psi\) and \(\tilde P_0^{\beta,\gamma,0}\).
This implies the functional central limit theorem (\ref{conv-Q-bis}), for the law \(\E_{\nu_V^W}\left(\tilde P_0^{\beta,\gamma,0}(\cdot)\right)\)  in place of \(\E_{\nu_V^W}\left(\tilde P_0^\psi(\cdot)\right)\) starting from 0.
By Theorem~\ref{convergence}, the law  \(\E_{\nu_V^W}\left(\tilde P^{\beta,\gamma,0}_0(\cdot)\right)\) is that of the discrete time process $(\tilde Z_n)$ under $\P^{VRJP}_0$.

The proof is exactly the same for the ERRW, one just needs to replace the law \(\nu_V^W(d\beta)\) by the law \(\tilde \nu_V^a(dW,d\beta)\).
\end{proof}
\begin{proof}[Proof of Lemma~\ref{estimates-psi}]
Let us start by the ERRW case, ii).
Consider the sequence of subsets of \(\Z^d\), \(V_n=[-n,n]^{d}\).
Recall that
\[
\psi^{(n)}(j)=e^{u^{(n)}(\delta_n,j)},
\]
when \(j\in V_n\).
Consider the point \(y_n=(-n,0, \ldots,0)\), so that \(y_n\) is at the boundary of the set, \(y_n\sim\delta_n\).
By~\cite[Lemma 7]{disertori2014transience} (which is the ERRW's counterpart of Proposition~\ref{estimate_disertori}, Section~\ref{ss_estimate_disertori}), we have for \(a>16\),
\begin{eqnarray}\label{cosh-voisin}
\E_{\tilde \nu_V^a}\left( (\cosh(u(\delta_n,y_n))^8\right)\le 2.
\end{eqnarray}
(Indeed, the proof does not depend on the graph structure, nor on the choice of the rooting.)

From, \cite[Theorem 4]{disertori2014transience}, there exists \(0<\tilde \lambda_2<\infty\) such that if
\(a>\tilde\lambda_2\), then for all \(i,j\) in \(V_n\),
\begin{eqnarray}\label{cosh2}
\E_{\tilde \nu_V^a}\left( \left(\cosh(u^{(n)}(\delta_n,i)-u^{(n)}(\delta_n,j))\right)^8\right)\le 2.
\end{eqnarray}
Remark that in \cite{disertori2014transience}, the rooting of the field is at 0 and the graph is the restriction of the graph \(\Z^d\) to \(V_n\).
But an attentive reading of the proof shows that the result is also valid for the graph \(\ggg_n=(V_n\cup\{\delta_n\}, E_n)\)
and rooting \(\delta_n\) as well.
Indeed, the estimate is based on the protected Ward's estimates, \cite[Lemma 4]{disertori2014transience},
which remain valid for diamonds inside the set \(V_n\), and on the estimate
on effective conductances, \cite[Proposition 3]{disertori2014transience}, which is in fact an estimate inside a "diamond".
Remark that the estimate (\ref{cosh2}) is also  valid when \(i\) or \(j\) is at the boundary of the set \(V_n\)
(in fact the proof is written in the case where the diamond \(R_{i,j}\) is inside the set \(V_n\), which is the case when \(j=y_n\)
and \(i\in \Z^d\) fixed for \(n\) large enough). Specified to \(j=y_n\) and \(i\in \Z^d\) fixed, it gives for \(n\) large enough
\begin{eqnarray}\label{cosh-yn}
\E_{\tilde \nu_V^a}\left( \left(\cosh(u^{(n)}(\delta_n,i)-u^{(n)}(\delta_n,y_n))\right)^8\right)\le 2.
\end{eqnarray}
By Cauchy-Schwartz inequality, and by (\ref{cosh-voisin}) and (\ref{cosh-yn}),
we get that
\begin{eqnarray*}
\E_{\tilde \nu_V^a}\left( (\psi^{(n)}(i))^{\pm 4}\right)\le \E_{\tilde \nu_V^a}\left( e^{\pm 8u^{(n)}(\delta_n,y_n)}\right)^\demi
\E_{\tilde \nu_V^a}\left( e^{\pm 8(u^{(n)}(\delta_n,i)-u^{(n)}(\delta_n,y_n))}\right)^\demi
\le
C_{\pm}
\end{eqnarray*}
for some constant \(C_{\pm}>0\) independent of \(n\). From this we deduce by Fatou's lemma for all \(i,j\) in \(\Z^d\), \(i\sim j\),
\[
\E_{\tilde\nu_V^a} \left( \left((W_{i,j}\psi(i)\psi(j)\right)^{\pm 1}\right)
\le
\E_{\tilde\nu_V^a} \left( \left(W_{i,j}\right)^{\pm 2}\right)^\demi \E_{\tilde\nu_V^a} \left( \left(\psi(0)\right)^{\pm 4}\right)^\demi
<\infty,
\]
for \(a\) large enough.

The proof is very similar in the VRJP case, and uses Theorem 1 of \cite{disertori2010quasi}. As previously,
the estimate is valid in the case we are interested in, that is for the graph \(\ggg_n\), rooted at \(\delta_n\), and for \(x\in \Z^d\), \(y=y_n\) for
\(n\) large enough.
\end{proof}
\ali
{\bf Acknowledgment.}
We are very grateful to an anonymous referee for valuable comments.

\bibliography{bibi}
\bibliographystyle{plain}

\end{document}